\definecolor{blue}{rgb}{0,0,1}
\definecolor{red}{rgb}{1,0,0}
\definecolor{green}{rgb}{0,.7,0}                                                
\newcommand{\red}[1]{{\color{red}#1}}
\newtheorem{theorem}{Theorem}[section]
\newtheorem{``theorem''}[theorem]{``Theorem''}
\newtheorem{lemma}[theorem]{Lemma}
\newtheorem{proposition}[theorem]{Proposition}
\newtheorem{corollary}[theorem]{Corollary}
\theoremstyle{definition}
\newtheorem{definition}[theorem]{Definition}
\newtheorem{example}[theorem]{Example}
\newtheorem{question}[theorem]{Question} 			
\theoremstyle{remark}
\newtheorem{remark}[theorem]{Remark}
\newtheoremstyle{thm}
  {12pt}
  {12pt}
  {\itshape}
  {\parindent}
  {\scshape}
  {.}
  {5pt}
  {}
\theoremstyle{thm}
\newtheorem*{T6.17}{Theorem \ref{gradientthm1}}
\newtheorem{thm}{Theorem}[section]
\newtheoremstyle{prop}
  {12pt}
  {12pt}
  {\itshape}
  {\parindent}
  {\scshape}
  {.}
  {5pt}
  {}
\theoremstyle{prop}
\newtheoremstyle{lem}
  {12pt}
  {12pt}
  {\itshape}
  {\parindent}
  {\scshape}
  {.}
  {5pt}
  {}
\theoremstyle{lem}
\newtheorem*{L2.6}{Lemma \ref{lem2.4}}
\newtheorem{lem}[thm]{Lemma}
\newtheoremstyle{defn}
  {12pt}
  {12pt}
  {\itshape}
  {\parindent}
  {\scshape}
  {.}
  {5pt}
  {}
\theoremstyle{defn}
\newtheorem{defn}[thm]{Definition}
\newtheoremstyle{examp}
  {12pt}
  {12pt}
  {}
   {\parindent}
  {\scshape}
  {.}
  {5pt}
  {}
\theoremstyle{examp}
\newtheoremstyle{cor}
  {12pt}
  {12pt}
  {\itshape}
  {\parindent}
  {\scshape}
  {.}
  {5pt}
  {}
\theoremstyle{cor}
\newtheoremstyle{recipe}
  {12pt}
  {12pt}
  {\itshape}
   {\parindent}
  {\scshape}
  {.}
  {5pt}
  {}
\theoremstyle{recipe}
\newtheoremstyle{rem}
  {12pt}
  {12pt}
  {}
   {\parindent}
  {\scshape}
  {.}
  {5pt}
  {}
\theoremstyle{rem}
\newtheoremstyle{quest}
  {12pt}
  {12pt}
  {\itshape}
  {\parindent}
  {\scshape}
  {.}
  {5pt}
  {}
\theoremstyle{quest}
\newcommand{\bp}{\begin{proof}}
\newcommand{\ep}{\end{proof}}
\renewcommand{\epsilon}{\varepsilon}
\newcommand{\Fix}{\operatorname{Fix}}
\newcommand{\Z}{{\mathbb Z}}
\newcommand{\id}{\operatorname{id}}
\newcommand{\R}{{\mathbb R}}
\providecommand{\ker}[1]{$\text{ker}\ {#1}$}
\newcommand{\N}{{\mathbb N}}
\newcommand{\Q}{{\mathbb Q}}
\newcommand{\pr}{\text{pr}}
\newcommand{\T}{Z}
\newcommand{\Rot}{\operatorname{Rot}}
\newcommand{\rot}{\operatorname{rot}}
\newcommand{\lk}{\operatorname{lk}}
\newcommand{\F}{\mathcal{F}}
\newcommand{\A}{\mathcal{A}}
\newcommand{\Diff}{\operatorname{Diff}}
\renewcommand{\u}{\tilde{u}}
\renewcommand{\v}{\tilde{v}}
\newcommand{\C}{\mathbb{C}}
\newcommand{\Ftilde}{\tilde{\mathcal{F}}}
\renewcommand{\max}{\textup{max}}
\renewcommand{\P}{\mathcal{P}}
\newcommand{\D}{\mathcal{D}}
\newcommand{\psitilde}{\tilde{\psi}}
\newcommand{\Atilde}{\tilde{A}}
\newcommand{\Jtilde}{\tilde{J}}
\renewcommand{\S}{\mathcal{S}}
\newcommand{\z}{\bar{z}}
\newcommand{\x}{\bar{x}}
\newcommand{\y}{\bar{y}}
\newcommand{\Twist}{\textup{twist}}
\newcommand{\half}{\frac{1}{2}}
\newcommand{\psibar}{\bar{\psi}}
\gdef\hex{"}}
\mathchardef\laplace=\hex0001
\mathchardef\nabla=\hex0272
\def\@@dalembert#1#2{\setbox0\hbox{$#1\mathrm I$}

  \vrule height\ht0 depth\z@ width.04\ht0

  \rlap{\vrule height\ht0 depth-.96\ht0 width.8\ht0}

  \vrule height.1\ht0 depth\z@ width.8\ht0

  \vrule height\ht0 depth\z@ width.1\ht0 }
\def\dalembert{\mathbin{\mathpalette\@@dalembert{}}\,}
\date{}
\begin{document}
\title{First Steps Towards a Symplectic Dynamics}

\author{ B. Bramham\footnote{\text{\bf bramham@ias.edu}  } and H. Hofer\footnote{\text{\bf hofer@ias.edu} } \\
Institute for Advanced Study\\
Princeton, NJ 08540}
\maketitle
\tableofcontents
\section{What Should Symplectic Dynamics be?}

 Many interesting physical systems have mathematical descriptions
as finite-dimensional or infinite-dimensional Hamiltonian systems.
 According to A. Weinstein, \cite{Wein},  Lagrange was the first to notice that the dynamical systems occurring in the mathematical description
of the motion of the planets can be written in a  particular form, which we call today a Hamiltonian system.
  Poincar\'e who started the modern theory of dynamical systems
and symplectic geometry developed a particular viewpoint combining  geometric and dynamical systems ideas    in the study of Hamiltonian systems.
After Poincar\'e the field of dynamical systems and the field of symplectic geometry developed separately.
Both fields have rich theories and the time seems ripe to develop the common core with highly integrated ideas 
from both fields.  Given the state of both fields this looks like a promising undertaking.
Though it is difficult to predict what ``Symplectic  Dynamics'' ultimately will be, it is not difficult to give examples
which show how dynamical systems questions and symplectic ideas come together in a nontrivial way.

\begin{figure}[htbp]
\begin{center}
\includegraphics[trim=0 2cm 0 -1cm, clip, scale=.6]{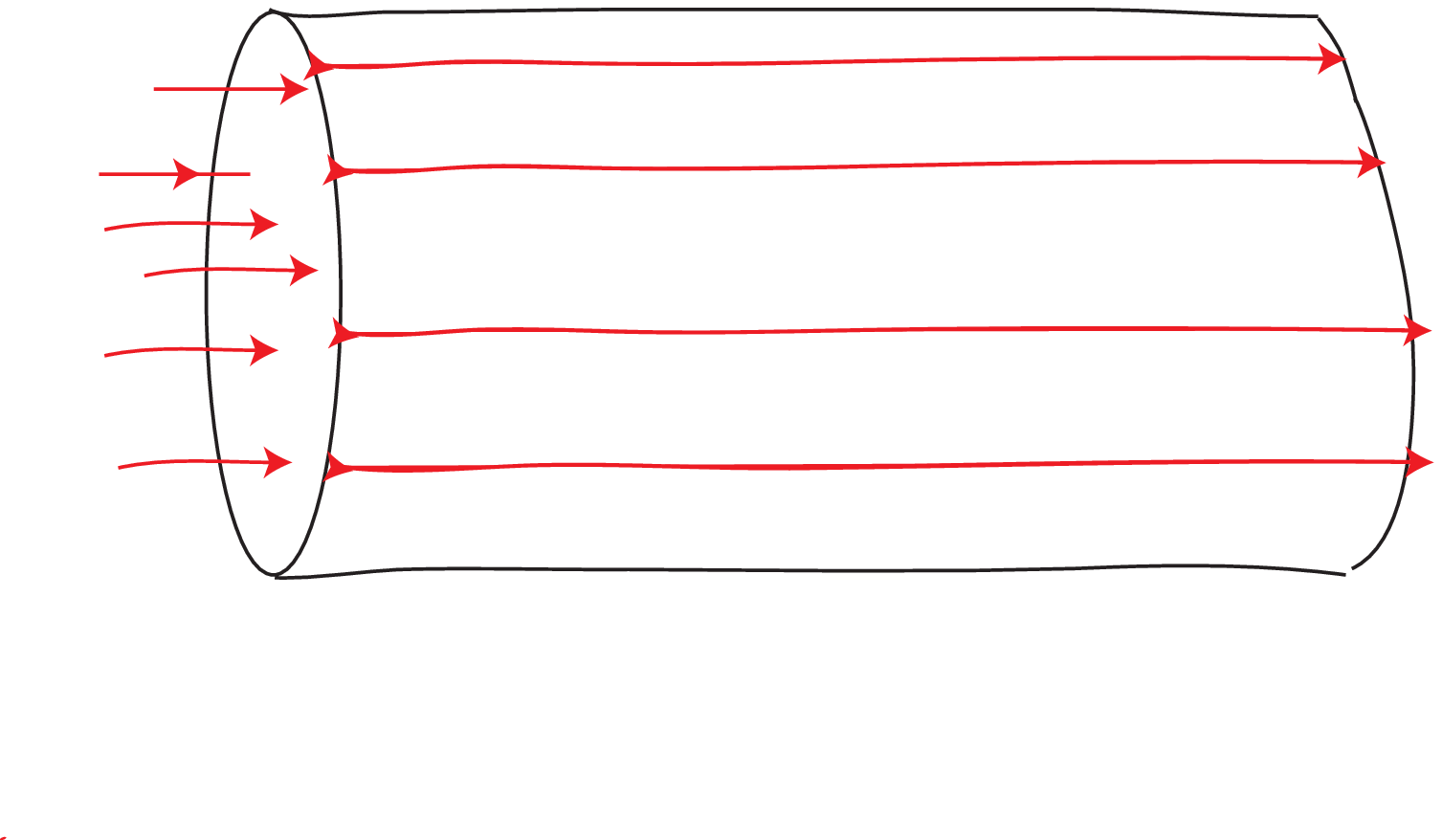}
\end{center}
\caption{A flow tube with a flow being standard at the boundary.}
\label{FIG-H1}
\end{figure}
Assume we have a cylinder, where the flow on the boundary is standard. The flow-lines enter in a standard way on the left 
and leave in a standard way on the right, see Figure \ref{FIG-H1}.  Here is the question: What must happen in the tube, assuming there are no rest points, so that not all flow-lines entering
on the left will leave on the right?

It is easy to modify the flow by introducing a pair of periodic orbits with the desired properties, see Figure \ref{FIG-H2}.

\begin{figure}[hbt]
\begin{center}
\includegraphics[height=80mm, bb=0 0 550 400]{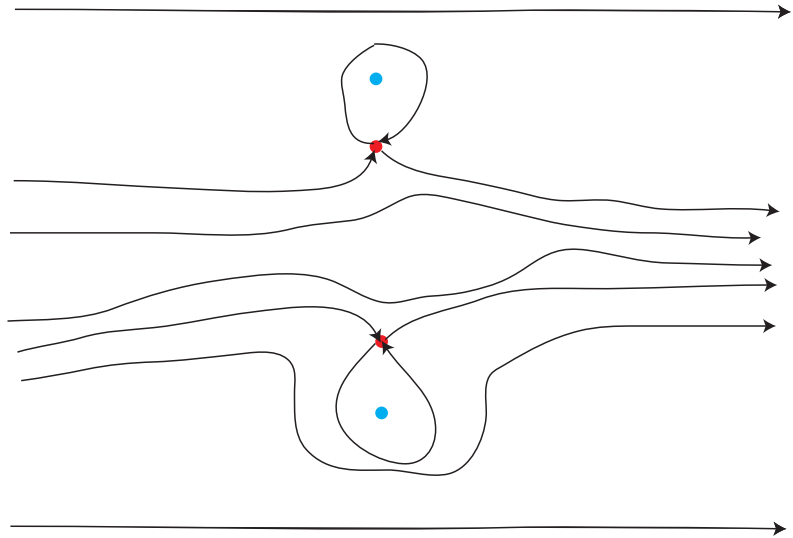}
\end{center}
\caption{Two periodic orbits are introduced in order to have a flow without rest points, but not all flow lines entering left leave on the right. }
\label{FIG-H2}
\end{figure}

On could ask next if we can achieve the desired effect without periodic orbits. That turns out to be a very hard question and is closely related to the Seifert conjecture, \cite{Seifert}.
It was solved in the category of smooth vector fields by K. Kuperberg in 1993, \cite{Kuperberg}.
One can modify the flow smoothly (even real analytically), without rest points and periodic orbits, so that not all orbits go through.
In the category of volume-preserving, G. Kuperberg, \cite{GKuperberg}, showed that the same holds on the  $C^1$-level.
  Modulo the question if G. Kuperberg's example can be made smooth  it seems that there cannot be any interesting additional contribution. However, it is precisely here 
where things become even more interesting and where we obtain a first glimpse of a ``Symplectic Dynamics''.

Fix on a compact three-manifold $M$ (perhaps with boundary) a volume form $\Omega$. A vector field $X$ is volume preserving provided $L_X\Omega=0$.
By the Cartan homotopy formula this means that 
$$
0 = i_X d\Omega + d i_X\Omega = d(i_X\Omega).
$$
Let us assume for the moment that $H^1(M)=H^2(M)=0$. Then we find a 1-form $\Gamma_0$ with
 $$
 i_X\Omega=d\Gamma_0
 $$
 and any 1-form $\Gamma$ with that property can be written as 
 $$
 \Gamma=\Gamma_0 +dh
 $$
 for a smooth map $h$.
 
Consider the collection ${\mathcal V}$ of all smooth nowhere vanishing $\Omega$-preserving vector fields.
Among these there is the interesting subset ${\mathcal V}^\ast$ consisting of those vector fields $X$
for which there exists a $\Gamma$ with $d\Gamma=i_X\Omega$, and so that $\Gamma(X)>0$. Observe that if $X'\in {\mathcal V}$ is close to $X$ we still have that $i_{X'}\Omega = d\Gamma'$
for a $\Gamma'$ close to $\Gamma$ and consequently  $\Gamma'(X')>0$ (by the compactness of $M$).
So we see that ${\mathcal V}^\ast$ is open in ${\mathcal V}$, in for example the $C^1$-topology, provided 
$H^1(M)=H^2(M)=0$.

\begin{definition}  Let $(M,\Omega)$ be a closed three-manifold equipped with a volume form. An $\Omega$-preserving vector field $X$ is called \emph{Reeb-like} provided there exist
a one-form $\Gamma$ satisfying $d\Gamma=i_X\Omega$ and $\Gamma(X)>0$ at all points
of $M$.
\end{definition}

  Assume that $X$ is Reeb-like,
so that $\Gamma(X)>0$ for some $\Gamma$ with $d\Gamma=i_X\Omega$. Note that this implies that $\Gamma\wedge d\Gamma$ is a volume form.
In other words $\Gamma$ is a contact form.

Define a positive function $f$ by
$$
f=\frac{1}{\Gamma(X)}.
$$
Then $Y=fX$ satisfies $\Gamma(Y)=1$ and $d\Gamma(Y,.) = i_Y d\Gamma=f i_Xi_X\Omega=0$. In particular
$$
L_Y(\Gamma\wedge d\Gamma) = 0.
$$
So $Y$ is Reeb-like for the modified volume form $\Gamma\wedge d\Gamma$, but satisfies the stronger condition
$\Gamma(Y)=1$. Observe that $Y$ and $X$ have the same unparameterized flow lines. So for many questions 
one can study $Y$ rather than $X$.

\begin{definition}
Let $M$ be a compact three-manifold. A \emph{Reeb vector field} on $M$ is a vector field for which there exists a contact form $\lambda$
with $\lambda(X)=1$ and $d\lambda(X,.)=0$.
\end{definition}

In \cite{EH} it was shown that if for a Reeb like vector field not all orbits pass through, then there exists a periodic orbit.
More precisely the method of proof in this paper shows the following result, where $D$ is the closed unit disk in ${\mathbb R}^2$.
\begin{theorem}[Eliashberg-Hofer]
Let $Z = [0,1]\times D$ with coordinates $(z,x,y)$ equipped with a contact form $\lambda$ which near $z=0$ or $z=1$ and 
$x^2+y^2=1$ has the form $dz + xdy$, so that close to the boundary the associated Reeb vector field is given by $(1,0,0)$. Then, if not all entering orbits go through, there has to be a periodic orbit
inside $Z$.
\end{theorem}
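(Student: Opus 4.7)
The plan is to prove the theorem using Hofer's theory of pseudoholomorphic curves in the symplectization of a suitable completion of $(Z,\lambda)$. First I would complete $Z$ by extending the standard collars $[0,\epsilon)\times D$ and $(1-\epsilon,1]\times D$ to half-infinite cylinders on which the contact form remains $dz+x\,dy$ and the Reeb vector field remains $\partial_z$. Denote the completed open contact three-manifold by $\widehat Z$. Since the Reeb flow on the attached ends is the straight translation $\partial_z$, no closed orbit of $\widehat Z$ can visit the ends; in particular every closed Reeb orbit lies inside the original $Z$. Equip $\R\times\widehat Z$ with a translation-invariant almost complex structure $J$ of the standard SFT type, i.e.\ $J\partial_a=R_\lambda$ and $J\xi=\xi$, chosen so that over the cylindrical ends the straight cylinders $\R\times\gamma_{x_0,y_0}$ over orbits $\gamma_{x_0,y_0}$ are $J$-holomorphic, thereby providing an explicit local model.

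Next I would set up a Bishop family of finite energy $J$-holomorphic disks whose boundaries lie on the totally real surface $\{0\}\times D\subset\widehat Z$, suitably perturbed to produce an interior elliptic tangency with $\xi$. A local one-parameter family of small embedded disks bubbles off the elliptic point, and the implicit function theorem extends this to a maximal continuous family $\{u_t\}_{t\in[0,T)}$ of somewhere injective $J$-holomorphic disks. By positivity of intersections in dimension four, the projections of these disks foliate an open subset of $\widehat Z$ transversely to the Reeb field away from the elliptic centre, and propagate into $Z$ away from the entry disk.

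The crux is a dichotomy for the behaviour of this family as $t\to T$. If the family extends smoothly over the entire parameter range, the projections foliate a neighbourhood of every entering Reeb orbit up to the exit, forcing every entering orbit to leave on the right; this contradicts the standing hypothesis. Hence the family must degenerate. Applying the SFT compactness theorem to a non-convergent sequence $u_{t_n}$, together with a uniform bound on the Hofer energy coming from the fixed area of the boundary in the standard model, yields a stable holomorphic building at least one of whose components is a punctured finite energy curve of positive energy. The positive puncture of such a component is asymptotic to a closed Reeb orbit of $\lambda$ on $\widehat Z$, and by the completion argument of the first step this orbit necessarily lies in the interior of $Z$, which is exactly the conclusion of the theorem.

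The main obstacle I expect is controlling the degenerations in the compactness step. One must rule out the benign alternatives that the limit building consists entirely of trivial strips over straight Reeb orbits, that the Bishop family simply drifts off to infinity along one of the extended ends, or that bubbling on the Lagrangian boundary absorbs all the non-trivial topology without producing a puncture. These scenarios are excluded by a monotonicity estimate for $J$-holomorphic curves combined with the prescribed germ of the family at the elliptic point and the explicit holomorphic model on the ends, which together guarantee that at least one bubble carries a genuine puncture. Once that punctured component is extracted, the standard asymptotic analysis of finite energy planes identifies the periodic orbit and finishes the proof.
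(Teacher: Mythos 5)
The paper you are working from does not actually contain a proof of this statement: it appears only with the citation to the Eliashberg--Hofer article ``A Hamiltonian characterization of the three-ball'' (reference \cite{EH}), and no argument is reproduced here. So there is nothing in this paper to compare your sketch against line-by-line; I can only assess it on its own merits and against the general filling-by-holomorphic-disks technique that the cited reference uses.

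Your overall architecture --- set up a Bishop family of $\tilde{J}$-holomorphic disks with boundary on a totally real surface carrying a single elliptic complex tangency, and argue that either the family reaches the boundary circle (forcing all orbits through) or degenerates and produces a finite-energy plane, hence a periodic orbit --- is exactly the kind of argument that appears in \cite{EH} and in \cite{H1}, so the approach is in the right spirit. However, there are genuine gaps at precisely the two places where the real work of such a proof lives.

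First, the totally real boundary condition is under-specified. If you take the natural candidate $\{0\}\times\{0\}\times D\subset\R\times\widehat{Z}$, then in the region where $\lambda = dz + x\,dy$ the tangent plane to that surface equals $\xi$ along the whole chord $\{x=0\}\cap D$, so the complex-tangency locus is one-dimensional, not an isolated elliptic point. You acknowledge a perturbation is needed, but this is a real move that must simultaneously (i) create a nondegenerate elliptic point, (ii) keep the surface transverse to the Reeb flow away from that point, and (iii) preserve the normal form near $\partial D$ on which all your energy estimates rest. The germ of the Bishop family is very sensitive to the linearization at the elliptic tangency, so this cannot be waved away.

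Second, and more seriously, your dichotomy's non-degenerate branch is asserted, not argued. You claim that if the family extends over its whole parameter range then the projected disks ``foliate a neighbourhood of every entering Reeb orbit up to the exit,'' and hence all orbits escape. A one-parameter family of pairwise disjoint embedded disks transverse to the Reeb flow, whose boundaries exhaust $D_0\setminus\{e\}$, does sweep out an open set on which each orbit crosses each disk once in the positive direction. But nothing you have written shows that this open set is all of $Z$, that it reaches $D_1$, or that an orbit cannot escape sideways around the swept region before exiting. Establishing that the filling actually ``traps'' every forward trajectory between consecutive leaves and pushes it through to $\{z=1\}$ is the topological core of the Eliashberg--Hofer argument, and it is missing from your outline.

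Third, in the degeneration step you invoke SFT compactness and ``a monotonicity estimate'' to rule out benign bubbling, but you never show that some bubble carries positive $d\lambda$-energy, nor do you exclude disk-bubbling on the totally real boundary. In Hofer's Weinstein-conjecture argument boundary bubbling is excluded by a Maslov index / positivity-of-intersections count, and an analogous argument is required here; without it you could have a building consisting entirely of trivial cylinders and a boundary bubble, which would not yield a periodic orbit. Until these three points are addressed, the proposal is a plausible outline in the correct spirit, but not yet a proof.
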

In other words, complicated Reeb dynamics produces periodic orbits. 
But even much more is true as we shall see. There is a holomorphic curve
theory, in the spirit of \cite{G},  related to the dynamics of Reeb-like vector fields, see \cite{H1}. The holomorphic curves allow to quantify the
complexity of the dynamics in terms of periodic orbits and relations between them. The latter are again expressed in terms of holomorphic curves.
Symplectic field theory (SFT), \cite{EGH}, uses the same ingredients to derive contact and symplectic invariants. However, it is possible to shift the focus
onto the dynamical aspects. The already strongly developed SFT gives an idea of the possible richness of the theory one might expect.
 This is precisely the key observation which indicates that there should be a field accurately described
as ``Symplectic Dynamics'' with ideas and techniques based on the close relationship between dynamics and associated 
holomorphic curve theories, as they occur in symplectic geometry and topology.   Our paper describes some of the observations.

\section{Holomorphic Curves}
In the first subsection we introduce the holomorphic curve theory associated to a contact form on a three-dimensional manifold.
This can also be done in higher dimensions. However, we shall restrict ourselves to low dimensions. Here the results which can be obtained
look the strongest.
\subsection{Contact Forms and Holomorphic Curves}\label{S:contact_forms_and_holomorphic_curves}
Consider a three-manifold $M$ equipped with a  contact form  $\lambda$.
The Reeb vector field associated to $\lambda$ is denoted by $X$ and, as previously explained, defined by
$$
i_X\lambda =1\ \ \text{and}\ \ \ \ i_Xd\lambda=0.
$$
There is another piece of data associated to $\lambda$. Namely the \emph{contact structure} $\xi$ defined as the kernel bundle associated to $\lambda$.
The form $d\lambda$ defines on the fibers of $\xi\rightarrow M$ a symplectic structure. Consequently, $\lambda$ gives us a canonical way to split the tangent space
$TM$ of $M$ into a line bundle $L$ with preferred section $X$ and a symplectic vector bundle $(\xi,d\lambda)$:
$$
TM\equiv (L,X)\oplus(\xi,d\lambda).
$$
We can pick a complex structure $J$ for $\xi$, so that $d\lambda(h,Jh)>0$ for $h\neq 0$. Then we can extend $J$ to an ${\mathbb R}$-invariant
almost complex structure $\tilde{J}$ on ${\mathbb R}\times M$ by requiring that the standard tangent vector $(1,0)$ 
at $(a,m)\in {\mathbb R}\times M$ is mapped to $(0,X(m))$.
\begin{figure}[hbt]
\begin{center}
\includegraphics[height=40mm,bb=0 0 512 370]{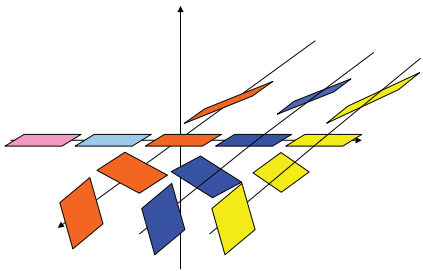}
\end{center}
\caption{A contact structure. }
\label{FIG-H3}
\end{figure}
At this point we have equipped
${\mathbb R}\times M$ with an ${\mathbb R}$-invariant almost complex structure that couples the Reeb vector field 
with the $\R$-direction.  We will refer to such as an almost complex structure \emph{compatible} with $\lambda$.  

It is natural to ask about the existence of holomorphic maps from Riemann surfaces with image in 
${\mathbb R}\times M$ and raise the question whether the geometry of these curves reflect in some way the 
dynamics of $X$, since $\tilde{J}$ couples the Reeb vector field with the ${\mathbb R}$-direction.
That in fact turns out to be true and was used by the second author to prove certain cases of the Weinstein conjecture, \cite{H1}.
This approach was in part motivated by Gromov's pseudoholomorphic curve theory for symplectic manifolds. However, the extension for contact manifolds
is by no means  straight forward, since the compactness issues for solution spaces are tricky, see \cite{H1,BEHWZ}.

The Weinstein conjecture was formulated in \cite{W78} and stipulates that on a closed manifold a Reeb vector field has a periodic orbit.
The first breakthrough came in \cite{Vi} followed by \cite{FHV}. In \cite{HV} it was shown that this conjecture can sometimes
be solved if holomorphic spheres are present and this paper was the starting point of linking the Weinstein conjecture to Gromov-Witten theory, before it really existed, 
which was completed in \cite{Tian}. The conjecture in dimension three was recently settled by Taubes,
\cite{Taubes}, and uses a relationship between Seiberg-Witten theory and holomorphic curve theory.
The conjecture  in higher dimensions is open. 
For example it is not known if every Reeb vector field on $S^5$ has a periodic orbit.
The only  result in higher dimensions, which proves the existence of periodic orbits
for a class of Reeb vector fields on every closed manifold (which admits a Reeb vector field), is given in \cite{albers-hofer}.
Let us begin with a detailed  discussion of the holomorphic curve theory.

As it turns out one should study tuples $(S,j,\Gamma,\tilde{u})$ with $(S,j)$ being a closed Riemann surface, $\Gamma$ a finite set of punctures,
and $\tilde{u}:=(a,u):S\setminus\Gamma\rightarrow {\mathbb R}\times M$ a smooth map with non-removable singularities at $\Gamma$ satisfying
the first order elliptic system
$$
T\tilde{u}\circ j = \tilde{J}\circ T\tilde{u}.
$$
This is a nonlinear Cauchy-Riemann-type equation.
It also  turns out to be useful to consider two tuples $(S,j,\Gamma,\tilde{u})$ and $(S',j',\Gamma',\tilde{u}')$ equivalent
if there exists a biholomorphic map $\phi:(S,j)\rightarrow (S',j')$ with $\phi(\Gamma)=\Gamma'$ and $\tilde{u}'\circ\phi=\tilde{u}$. We denote an equivalence class
by $[S,j,\Gamma,\tilde{u}]$.
Note that in symplectic field theory we consider a somewhat different equivalence also incorporating the natural ${\mathbb R}$-action
on ${\mathbb R}\times M$.

In a first step let us show that the dynamics of $X$ can be viewed as a part of the theory.  Given a solution $x:{\mathbb R}\rightarrow M$ of $\dot{x}=X(x)$
we can consider $[S^2,i,\{\infty\},\tilde{u}]$ with $\tilde{u}(s+it)=(s,x(t))$. Here $(S^2,i)$ is the standard Riemann sphere and $S^2\setminus\{\infty\}$ is identified with ${\mathbb C}$ with coordinates $s+it$. Observe that if $y$ is another solution of $\dot{y}=X(y)$ with $y(0)=x(t_0)$, then $[S^2,i,\{\infty\},\tilde{v}]$ with $\tilde{v}(s+it)=(s+c,y(t))$ is the same class.
Indeed take $\phi(s+it) =(s+c)+i(t+t_0)$ which defines a biholomorphic map $S^2\rightarrow S^2$ fixing $\infty$. Then
$$
\tilde{u}\circ \phi(s+it) = \tilde{u}((s+c)+i(t+t_0))= (s+c,x(t+t_0))=(s+c,y(t))=\tilde{v}(s+it).
$$
Hence
$$
[S^2,i,\{\infty\},\tilde{v}]=[S^2,i,\{\infty\},\tilde{u}].
$$
We call this particular type of class an \emph{orbit plane}, or a \emph{plane over a Reeb orbit}.  

If an orbit $x$ is periodic, say $x(t+T)=x(t)$, then it also gives us the class
$$
[S^2,i,\{0,\infty\},\tilde{u}]
$$
where $S^2\setminus\{0,\infty\}$ can be identified with ${\mathbb R}\times ({\mathbb R}/{\mathbb Z})$, and
$$
\tilde{u}(s,[t]) = (Ts,x(Tt)).
$$
We call this an \emph{orbit cylinder}, or a \emph{cylinder over a periodic Reeb orbit}.

That there is an interesting theory, which still has to be explored much further, comes from the fact
that there are many holomorphic curves which interrelate these simple building blocks.  When $M$ is compact, 
these are the curves satisfying a finite energy condition.  

\begin{figure}[hbt]
\psfrag{RtimesM}{$\R\times M$}
\psfrag{M}{$M$}
\begin{center}
\includegraphics[scale=.32]{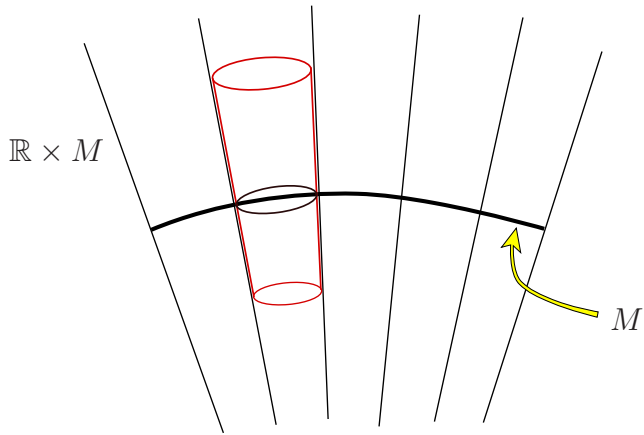}
\end{center}
\caption{A cylinder over a periodic orbit.}
\label{FIG-H4}
\end{figure}

\subsection{Notions of Energy}\label{S:notions_of_energy}
Let us introduce two important quantities, which are certain energy measurements.  

\begin{definition}
Let $[S,j,\Gamma,\tilde{u}]$ be a pseudoholomorphic curve. We assume that the punctures are non-removable.
Then we say it is a \emph{finite energy curve} provided
$$
E(\tilde{u}):=\sup_{\varphi} \int_{S\setminus\Gamma} \tilde{u}^\ast d(\varphi\lambda) <\infty.
$$
Here the supremum is taken over the collection $\Sigma$ of all smooth maps $\varphi:{\mathbb R}\rightarrow [0,1]$ 
with $\varphi'(s)\geq 0$.
\end{definition}

If we compute the energy $E$ of a cylinder over a $T$-periodic orbit we obtain the identity
$$
E=T.
$$
However, for the energy of a plane over a Reeb orbit we find
$$
 E=\infty.
$$
There is another useful energy which can be introduced.
\begin{definition}
The  \emph{$d\lambda$-energy} is defined by 
$$
E_{d\lambda}(\tilde{u})=\int_{S\setminus\Gamma} u^\ast d\lambda.
$$
\end{definition}

This energy turns out to be $0$ in both the previous cases. However, there are in general many interesting holomorphic curves
which have a positive $d\lambda$-energy. These are in fact the curves used to establish relations between the periodic orbits.
The easiest examples are finite energy planes.  
\begin{definition}
A \emph{finite energy plane} is an equivalence class $[S^2,j,\{\infty\},\tilde{u}]$ for which $\infty$ is (as usual) not removable
and $0<E<\infty$.
\end{definition}
We will see in Theorem \ref{T:feplanes_approach_periodic_orbits} that finite energy planes behave very differently 
to the (infinite) energy planes over orbits that we just encountered.  Interesting properties 
of finite energy planes were used in \cite{H1} to prove cases of the Weinstein conjecture, \cite{W78}.

It takes some analysis to show that if $\tilde{u}=(a,u):{\mathbb C}\rightarrow {\mathbb R}\times M$ represents 
a finite energy plane, then its ${\mathbb R}$-component $a$ is proper. That means that $a(z)\rightarrow \infty$ 
for $|z|\rightarrow\infty$.  Then, as a consequence of Stokes' theorem one easily verifies
\begin{lemma}
For a finite energy plane $\tilde{u}$ we have the equality 
$$
E(\tilde{u})=E_{d\lambda}(\tilde{u}).
$$
\end{lemma}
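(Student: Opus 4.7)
The plan is to prove both inequalities $E(\tilde{u})\ge E_{d\lambda}(\tilde{u})$ and $E(\tilde{u})\le E_{d\lambda}(\tilde{u})$ from Stokes' theorem applied to the exhaustion of $\C$ by closed disks $D_R$, using crucially the properness of the $\R$-component $a$ just recalled above.

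As preparation I would record that, since $d$ commutes with pullback,
$$\tilde{u}^* d(\varphi\lambda) \;=\; d\bigl(\varphi(a)\,u^*\lambda\bigr) \;=\; \varphi'(a)\,da\wedge u^*\lambda \;+\; \varphi(a)\,u^*d\lambda,$$
and a local computation in conformal coordinates $z=s+it$ based on the $\tilde{J}$-holomorphic equation $\tilde{u}_t=\tilde{J}\tilde{u}_s$ (which yields $\lambda(u_t)=a_s$, $\lambda(u_s)=-a_t$, and $J$-holomorphicity on $\xi$) rewrites both summands as pointwise non-negative $2$-forms,
$$\varphi'(a)\,da\wedge u^*\lambda \;=\; \varphi'(a)\bigl(a_s^2+a_t^2\bigr)\,ds\wedge dt,\qquad u^*d\lambda\ge 0.$$
Stokes' theorem then gives, for every $\varphi\in\Sigma$ and every $R>0$,
$$\int_{D_R} \tilde{u}^* d(\varphi\lambda) \;=\; \int_{\partial D_R}\varphi(a)\,u^*\lambda,\qquad J(R):=\int_{D_R} u^*d\lambda \;=\; \int_{\partial D_R} u^*\lambda,$$
with $J(R)\nearrow E_{d\lambda}(\tilde{u})$ monotonically.

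For the lower bound, I would choose cutoffs $\varphi_T\in\Sigma$ with $\varphi_T\equiv 0$ on $(-\infty,T-1]$ and $\varphi_T\equiv 1$ on $[T,\infty)$.  Properness produces $R_T$ with $a\ge T$ on $\C\setminus D_{R_T}$; for $R\ge R_T$ one has $\varphi_T(a)\equiv 1$ on $\partial D_R$ and hence $\int_{D_R}\tilde{u}^*d(\varphi_T\lambda)=J(R)$.  Sending $R\to\infty$ yields $\int_{\C}\tilde{u}^*d(\varphi_T\lambda)=E_{d\lambda}(\tilde{u})$, and therefore $E(\tilde{u})\ge E_{d\lambda}(\tilde{u})$.

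For the upper bound, fix any $\varphi\in\Sigma$ and set $\varphi_\infty:=\lim_{s\to\infty}\varphi(s)\in[0,1]$.  Stokes' identity rearranges to
$$\int_{D_R}\tilde{u}^*d(\varphi\lambda) \;=\; \varphi_\infty\,J(R) \;+\; \int_{\partial D_R}\bigl(\varphi(a)-\varphi_\infty\bigr)\,u^*\lambda,$$
and by properness $\varphi(a)-\varphi_\infty\to 0$ uniformly on $\partial D_R$ as $R\to\infty$, while $\varphi_\infty J(R)\to\varphi_\infty E_{d\lambda}(\tilde{u})$.  The \emph{main obstacle} is to show that the error term $\int_{\partial D_R}(\varphi(a)-\varphi_\infty)\,u^*\lambda$ tends to zero: writing $u^*\lambda|_{\partial D_R}=R\,a_R\,d\theta$ in polar coordinates (via the $\tilde{J}$-holomorphic equations) shows that this $1$-form is \emph{not} of constant sign, so the uniform smallness of $\varphi(a)-\varphi_\infty$ does not by itself kill the integral.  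I would resolve this by extracting a radial sequence $R_n\to\infty$ along which $\int_{\partial D_{R_n}}|u^*\lambda|$ remains bounded, using a Fubini argument on the annular regions that exploits the $L^1$-bound $\int_{\C}\psi'(a)(a_s^2+a_t^2)\,ds\,dt\le E(\tilde{u})$ for a comparison $\psi\in\Sigma$, together with the finiteness of $E_{d\lambda}(\tilde{u})$.  Along such $R_n$ the error vanishes, giving $\int_{\C}\tilde{u}^*d(\varphi\lambda)=\varphi_\infty E_{d\lambda}(\tilde{u})\le E_{d\lambda}(\tilde{u})$, and taking the supremum over $\Sigma$ completes the proof.
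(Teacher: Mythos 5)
Your lower bound argument is correct and is the standard one. For the upper bound you have correctly located the genuine difficulty — the error term $\int_{\partial D_R}(\varphi(a)-\varphi_\infty)\,u^*\lambda$ is not controlled by the uniform smallness of $\varphi(a)-\varphi_\infty$ alone — but your proposed remedy does not work as sketched. The bound $\int_{\C}\psi'(a)\,|\nabla a|^2\le E(\tilde u)$ for a \emph{fixed} $\psi\in\Sigma$ gives no control on $\int_{D_{2R}\setminus D_R}|\nabla a|^2$, because $\psi'(a)$ may be arbitrarily small on that annulus (the range of $a$ there is not bounded independently of $R$, and $\psi'\to 0$ at $\pm\infty$ since $\int\psi'\le 1$). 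The finiteness of $E_{d\lambda}$ controls only the $\xi$-part $\int_{\C}u^*d\lambda$, not $|\nabla a|^2$. So your Fubini/Cauchy--Schwarz selection of radii $R_n$ has no $L^2$-bound to feed on, and without it there is no reason $\int_{\partial D_{R_n}}|u^*\lambda|$ should stay bounded. (One \emph{can} show $\int_{\partial D_{R_n}}|u^*\lambda|$ is bounded along a subsequence, but only via the asymptotic analysis of Theorem \ref{T:feplanes_approach_periodic_orbits}, which is stated \emph{after} this lemma and is much harder; the paper's ``one easily verifies'' is clearly not invoking it.)

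The intended ``easy'' Stokes argument exhausts $\C$ not by disks $D_R$ but by the sublevel sets $\Omega_t:=\{a\le t\}$, which are compact precisely because $a$ is proper. For a regular value $t$ of $a$ (a.e.\ $t$ by Sard), Stokes gives
\begin{equation*}
\int_{\Omega_t}\tilde u^*d(\varphi\lambda)\;=\;\int_{\{a=t\}}\varphi(a)\,u^*\lambda\;=\;\varphi(t)\int_{\{a=t\}}u^*\lambda\;=\;\varphi(t)\,L(t),\qquad L(t):=\int_{\Omega_t}u^*d\lambda,
\end{equation*}
since on the level set $\{a=t\}$ the factor $\varphi(a)=\varphi(t)$ is literally constant, so there is \emph{no} error term at all. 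Your own pointwise nonnegativity observations then show $L(t)\nearrow E_{d\lambda}(\tilde u)$ and $\int_{\Omega_t}\tilde u^*d(\varphi\lambda)\nearrow \int_{\C}\tilde u^*d(\varphi\lambda)$ as $t\to+\infty$ (through regular values), while $\varphi(t)\to\varphi_\infty$ and $L(t)\to 0$ as $t\to-\infty$. Hence $\int_{\C}\tilde u^*d(\varphi\lambda)=\varphi_\infty E_{d\lambda}(\tilde u)$ exactly, for every $\varphi\in\Sigma$, and taking the supremum over $\varphi$ (which attains $\varphi_\infty=1$) gives $E(\tilde u)=E_{d\lambda}(\tilde u)$. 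In short: swap the disk exhaustion for the sublevel-set exhaustion of $a$ and the obstacle you flagged disappears; as stated, your upper-bound argument has a gap.
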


The finite energy planes have some nice properties. For example they detect contractible periodic orbits
of the Reeb vector field.

\begin{theorem}\label{T:feplanes_approach_periodic_orbits}
Assume that $\tilde{u}:=(a,u):{\mathbb C}\rightarrow {\mathbb R}\times M$ is smooth and satisfies the differential 
equation 
$$
T\tilde{u}\circ i =\tilde{J}\circ T\tilde{u}.
$$
Assume further that $\tilde{u}$ is nonconstant and $E(\tilde{u})<\infty$.
Then $T:=E(\tilde{u})\in (0,\infty)$ and 
for every sequence $r_k\rightarrow \infty$ there exists a subsequence
$r_{k_j}$ and a solution $x:{\mathbb R}\rightarrow M$ of 
$$
\dot{x}=X(x)\ \ \text{and}\ \  x(0)=x(T)
$$
so that in addition
$$
\lim_{j\rightarrow\infty} u(r_{k_j}\cdot e^{2\pi i t}) = x(Tt) \ \ \ \text{in}\ \ \ C^{\infty}({\mathbb R}/{\mathbb Z},M).
$$
\end{theorem}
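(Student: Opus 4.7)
The strategy is to combine a bubbling-off argument at infinity (to bound gradients), Stokes' theorem on the disks $\{|z|\le r\}$ (to identify the asymptotic charge), and elliptic regularity to extract a smoothly convergent subsequence of loops whose limit is forced to be a Reeb orbit.

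\emph{Step 1 ($T>0$).} Nonconstancy of $\tilde{u}$ combined with the preceding lemma yields $T = E(\tilde{u}) = E_{d\lambda}(\tilde{u})$. Since $\tilde{J}$ is compatible with $\lambda$, the integrand $u^*d\lambda$ is pointwise nonnegative and vanishes exactly where $du$ takes values in $\langle X\rangle$. If $T=0$ then $du(T\mathbb{C})\subset\langle X\rangle$ everywhere, and the pseudoholomorphic equation $T\tilde{u}\circ i=\tilde{J}\circ T\tilde{u}$ would force $\tilde{u}$ to be constant, contradicting hypothesis.

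\emph{Step 2 (cylindrical coordinates).} Reparametrize via $z = e^{2\pi(s+it)}$ and write $\tilde{u}(z)=(b(s,t),w(s,t))$ on $[0,\infty)\times S^1$. The Cauchy--Riemann equation decouples into the projected equation $\pi\partial_s w = J(w)\,\pi\partial_t w$ (with $\pi:TM\to\xi$ the projection along $X$) together with $\partial_s b=\lambda(w)\partial_t w$ and $\partial_t b=-\lambda(w)\partial_s w$. The hypothesis $r_k\to\infty$ becomes $s_k\to+\infty$.

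\emph{Step 3 (uniform gradient bound at infinity).} This is the technical heart. Suppose for contradiction that there exist $(s_k,t_k)$ with $s_k\to\infty$ and $|\nabla\tilde{w}(s_k,t_k)|\to\infty$. Using Hofer's lemma, perturb $(s_k,t_k)$ so that one has nearly-maximal gradient on shrinking neighborhoods, set $\varepsilon_k := 1/|\nabla\tilde{w}(s_k,t_k)|$, and define the rescalings
$$
\tilde{v}_k(\zeta)\;:=\;\bigl(\,b(s_k+\varepsilon_k\zeta_1,t_k+\varepsilon_k\zeta_2)-b(s_k,t_k)\,,\;w(s_k+\varepsilon_k\zeta_1,t_k+\varepsilon_k\zeta_2)\bigr).
$$
Each $\tilde{v}_k$ is $\tilde{J}$-holomorphic with $|\nabla\tilde{v}_k(0)|=1$, and bounded gradient on compact sets. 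An Ascoli/elliptic-bootstrap argument extracts a $C^\infty_{\loc}$-limit $\tilde{v}_\infty:\mathbb{C}\to\mathbb{R}\times M$ which is nonconstant and satisfies $E(\tilde{v}_\infty)\le E(\tilde{u})<\infty$. The removable-singularity case at $\infty$ is excluded because $\mathbb{R}\times M$ supports no nonconstant pseudoholomorphic spheres (the $\mathbb{R}$-component cannot attain a maximum). Hence $\tilde{v}_\infty$ is a finite energy plane, so by the preceding lemma its $d\lambda$-energy is strictly positive. However, Stokes' theorem on $\{|z|\le r\}$ yields
$$
\int_{|z|\ge r} \tilde{u}^* d\lambda \;=\; T\;-\;\int_{|z|=r} u^*\lambda\quad\longrightarrow\;0\qquad(r\to\infty),
$$
so no positive $d\lambda$-energy can be concentrated arbitrarily far out -- contradiction. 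Thus $|\nabla\tilde{w}|$ is bounded on $[S_0,\infty)\times S^1$ for some $S_0$.

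\emph{Step 4 (charge asymptotics and extraction of a loop limit).} Stokes on $\{|z|\le r\}$ gives $\int_{|z|=r}u^*\lambda\to T$ as $r\to\infty$. With the gradient bound of Step 3, the loops $\gamma_k(t):=u(r_k e^{2\pi i t})$ are $C^1$-equicontinuous on $S^1$; Ascoli plus elliptic bootstrapping (applied to the translated maps $\tilde{w}(s_k+\sigma,t)-(b(s_k,0),0)$ on $[-1,1]\times S^1$) yields a subsequence $r_{k_j}$ along which $\gamma_{k_j}\to \gamma_\infty$ in $C^\infty(S^1,M)$.

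\emph{Step 5 (the limit is a $T$-periodic Reeb orbit).} Step 3 gives $\int_{[R,\infty)\times S^1}\tilde{w}^*d\lambda \to 0$, and in a compatible metric the integrand controls $|\pi\partial_t w|^2$. By the mean value theorem in $s$, we may further pass to a subsequence so that $\pi\partial_t w(s_{k_j},\cdot)\to 0$ in $L^2(S^1)$, upgraded to $C^\infty$ by regularity. Hence $\dot\gamma_\infty(t)\in\mathbb{R}\,X(\gamma_\infty(t))$. The charge identity of Step 4 gives $\int_0^1\lambda(\dot\gamma_\infty)\,dt = T$, forcing $\dot\gamma_\infty = T\,X(\gamma_\infty)$. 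Thus $\gamma_\infty(t) = x(Tt)$ for a $T$-periodic solution $x$ of $\dot x = X(x)$, proving the claim.

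The \emph{main obstacle} is Step 3: one must rule out gradient blow-up in a noncompact target, which requires the quantified bubbling-off procedure together with the subtle equality $E=E_{d\lambda}$ for finite energy planes in order to expose a contradiction with the vanishing $d\lambda$-energy at infinity. Steps 4--5 are then comparatively routine consequences of Stokes' theorem and elliptic regularity.
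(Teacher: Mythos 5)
The paper does not prove Theorem \ref{T:feplanes_approach_periodic_orbits}; it is stated as a result from \cite{H1}. Your proposal reconstructs Hofer's original argument, and the overall architecture --- bubbling-off to obtain a uniform gradient bound on the half-cylinder, Stokes to identify the asymptotic charge $T$, Ascoli plus bootstrapping to extract a smooth loop limit, and the vanishing of the $d\lambda$-tail to force the limit to be a $T$-periodic Reeb orbit --- is the right one and is the same route as the cited source.

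There is, however, a genuine error in Step 1 as written. You claim that if $du(T\mathbb{C})\subset\langle X\rangle$ then ``the pseudoholomorphic equation $T\tilde{u}\circ i=\tilde{J}\circ T\tilde{u}$ would force $\tilde{u}$ to be constant.'' This is false: the plane over a Reeb trajectory, $\tilde{u}(s+it)=(s,x(t))$, is $\tilde{J}$-holomorphic, nonconstant, and has $\pi\,du\equiv 0$. The PDE alone gives nothing here; constancy must come from the finite energy hypothesis, and when $\pi\,du\equiv 0$ this reduces to the nontrivial fact that a nonconstant entire function $h=a+i\phi:\mathbb{C}\to\mathbb{C}$ cannot have finite Hofer energy. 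There is also a circularity: you invoke Lemma 2.5 ($E=E_{d\lambda}$) to identify $T$, but that lemma is stated for finite energy planes, i.e. already assumes $E>0$. Both problems disappear if you argue directly with the $E$-energy rather than $E_{d\lambda}$: for a compatible $\tilde{J}$ the integrand $\tilde{u}^{*}d(\varphi\lambda)=\varphi'(a)\,da\wedge u^{*}\lambda+\varphi(a)\,u^{*}d\lambda$ is pointwise nonnegative, with the first term equal to $\varphi'(a)\bigl((\partial_s a)^2+(\partial_t a)^2\bigr)\,ds\wedge dt$, so $E(\tilde{u})=0$ forces $da\equiv 0$ and $u^{*}d\lambda\equiv 0$ simultaneously, hence $\tilde{u}$ constant. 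Thus nonconstancy gives $E>0$ outright, and only then does Lemma 2.5 yield $T=E=E_{d\lambda}\in(0,\infty)$. This repaired Step 1 is also what you silently rely on in Step 3 when asserting that the bubble $\tilde{v}_\infty$ has strictly positive $d\lambda$-energy, so the fix is load-bearing rather than cosmetic.
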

In other words, non-constant solutions on the 1-punctured Riemann sphere are related to periodic orbits
for the Reeb vector field. The period in fact being the quantity
$$
T=\int_{\mathbb C} u^\ast d\lambda =E_{d\lambda}(\tilde{u}).
$$

\begin{figure}[hbt]
\psfrag{RtimesM}{$\R\times M$}
\psfrag{M}{$M$}
\begin{center}
\includegraphics[scale=.4]{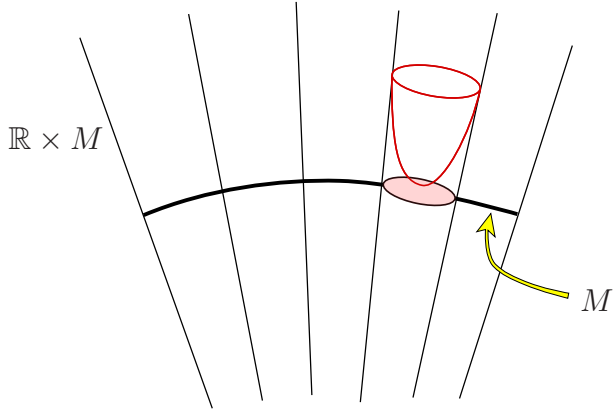}
\end{center}
\caption{A finite energy plane asymptotic to a cylinder over a periodic orbit, and the projected 
disk in the contact manifold.}
\label{FIG-H5}
\end{figure}

The main idea in \cite{H1} was to show that for the nonlinear Cauchy-Riemann problem associated to the Reeb vector field
of an overtwisted contact form there always exists a finite energy plane, showing the existence of a periodic orbit.

\subsection{Global Systems of Surfaces of Section}

Assume that $\lambda$ is a contact form on the closed three-manifold $M$. 
Suppose we have fixed a compatible almost complex structure $\tilde{J}$ on $\R\times M$ as described before.
If $[S,j,\Gamma,\tilde{u}]$ is an equivalence class of solutions associated to
the nonlinear Cauchy-Riemann equation, we can associate to it its \emph{image} $F$ defined by
$$
F_{[S,j,\Gamma,\tilde{u}]}=\tilde{u}(S\setminus\Gamma).
$$
This definition does not depend on the representative we have picked.  We call 
$[S,j,\Gamma,\tilde{u}]$ an \emph{embedded solution} provided the map
$$
\tilde{u}:S\setminus\Gamma\rightarrow {\mathbb R}\times M
$$
is an embedding. Let us also observe that for a given solution $[S,j,\Gamma,\tilde{u}]$ and real constant
$c\in {\mathbb R}$ we obtain another solution $[S,j,\Gamma,\tilde{u}]_c$ defined by
$$
[S,j,\Gamma,\tilde{u}]_c := [S,j,\Gamma,\tilde{u}_c],
$$
where $(a,u)_c = (a+c,u)$ and $\tilde{u}=(a,u)$. Observe that the image of $[S,j,M,\tilde{u}]_c$ is the image
of $[S,j,\Gamma,\tilde{u}]$ shifted by $c$ via the obvious ${\mathbb R}$-action on ${\mathbb R}\times M$.

\begin{definition}\label{D:finite_energy_foliation}
Let $\lambda$ be a contact form on the three-manifold $M$ and $\tilde{J}$ a compatible almost complex structure.  A \emph{finite energy foliation} $\tilde{\mathcal F}$ associated to this 
data is a smooth foliation of ${\mathbb R}\times M$ by the images of embedded curves, having finite 
energy, with the property that if $F$ is a leaf, each $F_c$ is also a leaf\footnote{We will 
sometimes emphasize this last property by referring to a finite energy foliation as being 
``$\R$-invariant''.  In this article we will not consider finite energy foliations without this 
invariance under the $\R$-translations.}  .
\end{definition}

Let us observe that if we drop the requirement of finite energy we always have the following object.  

\begin{definition}\label{D:vertical_foliation}
 Let $\lambda$ be a contact form on the three-manifold $M$. The \emph{vertical foliation} 
$\Ftilde^\nu(M,\lambda)$ is defined to be the 
foliation of $\R\times M$ whose leaves take the form $\R\times\phi(\R)$ over all Reeb 
trajectories $\phi:\R\rightarrow M$.  
\end{definition}

Note that the leaves of the vertical foliation are pseudoholomorphic for any almost 
complex structure compatible with $\lambda$, and they are invariant under $\R$-translations.  
However, the vertical foliation is only a finite energy foliation when every Reeb orbit is periodic.  
This simple observation will be crucial in section \ref{S:proof_of_a_theorem_of_Franks_and_Handel}.   

Finite energy foliations, when they exist, have important consequences for the Reeb flow.  
Recall that a \emph{surface of section}\footnote{{Contrast this definition with that of a global
surface of section which has the additional property that every orbit, other than the bindings or spanning orbits, hits the surface in forward and backward time.}}
 for a flow on a three-manifold is an embedded surface, possibly with 
boundary, with the property that the flow is 
transverse to the interior of the surface while each boundary circle is a periodic orbit, {called binding or spanning orbit}.  Poincar\'e used this 
notion to great effect, constructing such surfaces locally ``by hand''.  It was observed in 
\cite{HWZ-convex} that a finite energy foliation gives rise to a filling of the entire three-manifold by 
surfaces of section, simply by projecting the leaves down via the projection map
\[
 		\textup{pr}:\R\times M\rightarrow M.  
\]
The resulting filling $\F$ of 
the three-manifold was therefore called a global system of surfaces of section, which we losely define as follows. 

\begin{definition}\label{D:global_system_of_surfaces_of_section}
Let $M$ be a three-manifold with a nowhere vanishing vector field $X$ having a globally defined flow.  
A \emph{global system of surfaces of section} for this data is a finite collection of periodic orbits 
$\P$ of the flow, called the spanning orbits, and a smooth foliation of the complement 
\[
 M\backslash\P
\]
by embedded punctured Riemann surfaces $\S$, such that each leaf in $\S$ converges to a spanning orbit 
at each of its punctures, and such that the closure of each leaf in $M$ is a surface of section for the flow.  
\end{definition}

An adapted open book associated to a contact three-manifold \cite{geiges} provides a familiar example 
of a global system of surfaces of section, but in contrast to the situation we describe here, 
one only knows \emph{there exists} a Reeb flow making the leaves surfaces of section.  A further distinction, is 
that in an open book all leaves lie in a single $S^1$ family, in particular they all have the same collection of spanning orbits.  

\begin{proposition}[Hofer-Wysocki-Zehnder,\cite{HWZ-tight,HWZ-katok}]\label{P:global_system_of_surfaces_of_section}
 Suppose that a three-manifold $M$ equipped with a contact form $\lambda$ admits an associated finite energy 
foliation $\Ftilde$ (with respect to some almost complex structure compatible with $\lambda$).  Then the projection 
of the leaves down to $M$ is a singular foliation 
\[
 	\F:=\{\,\textup{pr}(F)\, |\, F\in\Ftilde\,\}
\]
with the structure of a global system of surfaces of 
section for the Reeb flow.  
\end{proposition}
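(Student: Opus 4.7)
The plan is to analyze the leaves of $\tilde{\F}$ according to how they interact with the $\R$-direction, and use the structure of $\tilde{J}$ (namely $\tilde{J}\partial_a = X$) to transfer the $\R$-invariant foliation upstairs to a singular foliation transverse to $X$ downstairs.

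\textbf{Step 1: Dichotomy on leaves.} Fix a leaf $F = \tilde{u}(S\setminus\Gamma)$. At a point $p \in F$, the tangent space $T_pF$ is a real $2$-plane that is $\tilde{J}$-invariant. Since $\tilde{J}\partial_a = X$, the subspace $T_pF$ either contains \emph{both} $\partial_a$ and $X$ or \emph{neither}. If the former occurs at a single point, then $F$ locally coincides with the surface $\R\times\gamma$ for $\gamma$ a piece of Reeb orbit, and by unique continuation for pseudoholomorphic curves $F$ itself is an orbit cylinder. This yields the dichotomy: each leaf is either an orbit cylinder, or is everywhere transverse to both $\partial_a$ and $X$.

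\textbf{Step 2: Define the spanning orbits $\mathcal{P}$.} By the dichotomy, the $\R$-invariance of $\tilde{\F}$ forces every orbit-cylinder leaf to be fixed setwise by the $\R$-action. Let $\mathcal{P}$ be the set of periodic Reeb orbits $\gamma$ such that $\R\times\gamma$ is a leaf. Applying Theorem \ref{T:feplanes_approach_periodic_orbits} at each puncture of a non-cylinder leaf $F$, the leaf $F$ is asymptotic to a cylinder over a periodic Reeb orbit; by the foliation property, this asymptotic orbit must be one of the elements of $\mathcal{P}$ (because the asymptotic cylinder is itself a leaf, forced by the $\R$-invariance and the fact that the leaves constitute a foliation).

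\textbf{Step 3: Projection is injective and transverse on non-cylinder leaves.} Let $F$ be a non-cylinder leaf. At every $p \in F$, $\partial_a \notin T_pF$, so $d\mathrm{pr}|_{T_pF}$ has trivial kernel and $\mathrm{pr}|_F$ is an immersion whose image is transverse to $X$ in $M$. For global injectivity, suppose $\mathrm{pr}(p_1) = \mathrm{pr}(p_2)$ with $p_1, p_2 \in F$ but $p_1 \neq p_2$; writing $p_i = (a_i,m)$, we have $p_2 \in F \cap F_{a_2 - a_1}$, so two distinct leaves intersect, contradicting the foliation property unless $F = F_{a_2-a_1}$. The stabilizer $\{c \in \R : F_c = F\}$ is a closed subgroup of $\R$; if it is nontrivial and discrete, the function $a$ on $F$ is periodic, contradicting properness of $a$ guaranteed by the finite energy condition (asymptotic behavior at punctures forces $|a|\to\infty$); if it equals all of $\R$, then $F$ is an orbit cylinder, contradicting our assumption.

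\textbf{Step 4: Assembly of the global system.} Let $\S$ be the collection of projections $\mathrm{pr}(F)$ over all non-cylinder leaves $F$. Steps 1--3 show each element of $\S$ is an embedded punctured surface, transverse to $X$, with punctures asymptotic to elements of $\mathcal{P}$. Disjointness of $\S$ and covering of $M\setminus\bigcup\mathcal{P}$ follow from the analogous properties of $\tilde{\F}$: two projected leaves $\mathrm{pr}(F), \mathrm{pr}(F')$ meet at $m$ only if some vertical $\R$-translate of $F'$ equals $F$, i.e., $F$ and $F'$ belong to the same $\R$-family; surjectivity at $m \in M\setminus\bigcup\mathcal{P}$ holds since $(0,m)$ lies on some leaf, which must be a non-cylinder leaf (else $m$ would lie on a spanning orbit). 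The closure of each $\mathrm{pr}(F)$ in $M$ is obtained by adjoining the asymptotic spanning orbits from Theorem \ref{T:feplanes_approach_periodic_orbits}, and the convergence $u(r_{k_j}e^{2\pi i t})\to x(Tt)$ in $C^\infty$ supplies the necessary regularity for the closure to be a surface of section in the classical sense.

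The principal obstacle is Step 3, specifically the combination of transversality and global injectivity: one must rigidly exploit the interplay between $\tilde{J}\partial_a = X$, unique continuation for pseudoholomorphic curves, and the fact that the leaves $\{F_c\}_{c \in \R}$ form a genuine foliation, in order to rule out any ``self-intersection upon projection'' short of the trivial case of orbit cylinders.
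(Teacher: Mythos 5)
Your Step 1 misuses unique continuation. Knowing that $\partial_a,X\in T_pF$ at a single point $p$ is only a \emph{first-order} tangency between $F$ and the orbit cylinder through $\mathrm{pr}(p)$; two distinct $\tilde J$-holomorphic curves can be tangent to finite order at an isolated point, so this does not force local coincidence, and unique continuation (which needs agreement to infinite order, or coincidence on an open set) does not apply. The correct mechanism, on which the paper's sketch implicitly rests, runs in the opposite logical order: by the foliation axiom, $F$ and each translate $F_c$ either coincide or are disjoint. If $F$ were tangent to $\partial_a$ at some $p$, then $F$ and the nearby translates $F_c$, $c\neq 0$ small, would be two pseudoholomorphic curves tangent at a point, so by positivity of intersections they would intersect; hence $F=F_c$. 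The stabilizer $\{c:F_c=F\}$ is then a closed subgroup of $\R$ containing arbitrarily small positive elements, hence all of $\R$, so $F$ is $\R$-invariant and therefore an orbit cylinder. (Equivalently one can cite the Hofer--Wysocki--Zehnder similarity-principle argument: the $\xi$-part $\pi_\xi\circ Tu$ of the derivative satisfies a Cauchy--Riemann-type equation, so it either vanishes identically, forcing an orbit cylinder, or has isolated zeros.) So the paper's dichotomy, ``orbit cylinder or disjoint from every nontrivial $\R$-translate,'' is established \emph{first} from the foliation structure, and only then yields the pointwise statement that a non-cylinder leaf is nowhere tangent to $\partial_a$.

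Apart from this reversal, your Steps 2--4 track the paper's (quite terse) proof sketch and correctly add detail: in particular, your stabilizer argument for global injectivity supplies what the paper defers to Siefring's embeddedness criteria. One caveat there: ruling out a nontrivial \emph{discrete} stabilizer via ``properness of $a$'' is clean only when $a$ is bounded on one side (curves with only positive, or only negative, punctures), for then the set of $a$-values cannot be invariant under a nontrivial shift. For the remaining case $(S,\Gamma)=(S^2,\{0,\infty\})$, where the automorphism group is infinite and $a$ is unbounded on both sides, one should argue separately (e.g.\ using the asymptotics of $u$ at the punctures to show the image of $u$ lies on a single Reeb orbit), which again yields an orbit cylinder.
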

{Note that in this context, where the dynamics comes from a Reeb vector field, each surface 
of section in $\F$ comes naturally equipped with an area form of finite volume, preserved by the flow.  
Indeed, that the Reeb vector field is transverse to the interior of a surface $S\in\F$ 
implies that $d\lambda$ restricts to a non-degenerate $2$-form on $S$, while the embeddedness of the leaf 
up to the boundary yields that the total volume, i.e. the $d\lambda$-energy of the corresponding 
holomorphic curve, is finite.  
In fact, by Stokes theorem the area of each leaf is equal to the sum of the 
periods of the positive punctures minus the sum of the periods of the negative punctures.}

Proposition \ref{P:global_system_of_surfaces_of_section} can be seen as follows.  If $F$ is a leaf in $\Ftilde$, then either $F$ is a {cylinder over a periodic orbit}, or it is disjoint from each of its $\R$-translates, $F_c$, $c\neq 0$.  In the former case the projection of $F$ down to $M$ is just the periodic orbit 
it spans.  In the latter case, the leaf $F$ must be nowhere tangent to $(1,0)$ in $T(\R\times M)$, and since $(1,0)$ 
is coupled by $\Jtilde$ with the Reeb vector field $X$, the leaf is transverse to the complex line $\R(1,0)\oplus\R X$. 
This amounts to the projection of the leaf being transverse to $X$ in $M$.  A variety of necessary and sufficient conditions for the projection of a curve to be embedded are given in \cite{Siefring}.

A global system of surfaces of section $\F$ inherits a certain amount of other structure from the finite energy 
foliation $\Ftilde$.  In particular, if the contact form has only non-degenerate periodic orbits, there are only two possibilities for the 
local behavior near each spanning orbit, depending on the parity of its Conley-Zehnder index.  Local cross-sections 
are illustrated in Figure \ref{F:local_cross-sections}\footnote{In general, the picture on the left of 
Figure \ref{F:local_cross-sections} could happen at a spanning orbit having even parity Conley-Zehnder index  
if the orbit has a constraint in the form of an asymptotic ``weight''.  
But in all the examples in this paper, there are only weights on odd index orbits so this doesn't happen.}.   

\begin{figure}[thb]
\begin{center}
\includegraphics[scale=.42]{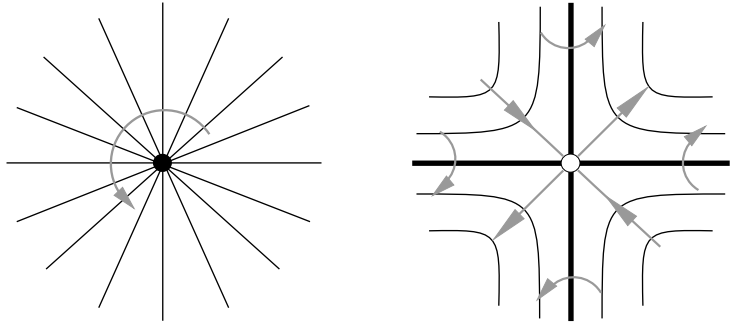}
\caption{Illustrating the two forms of behavior possible in a neighborhood of a spanning orbit in a global 
system of surfaces of section $\F$ that arises as the projection of a finite energy foliation $\Ftilde$, 
for a non-degenerate contact form.  
On the left all surfaces which enter the neighborhood converge to the spanning orbit.  This occurs when 
the spanning orbit has odd parity Conley-Zehnder index (see also the footnote on previous page).  
In the second case  precisely four leaves entering a neighborhood of the orbit connect to it.  This 
occurs whenever the spanning orbit has even Conley-Zehnder index.  The Reeb trajectories are transversal to 
the page and to the leaves in the direction of the arrows.  In an open book the picture is as on the left at 
every spanning orbit.}
\label{F:local_cross-sections}
\end{center}
\end{figure}
For a global illustration of a global system of surfaces of section on $S^3$ see Figures \ref{Fig14} and 
\ref{Fig16}, and on a solid torus see \ref{F:example_of_a_GSSS_for_a_mappingtorus}, 
\ref{F:foliaton_of_mapping_torus_and_cross-section}, 
\ref{F:different_boundary_conditions}, \ref{F:nonsimple_foliation}.  

It is not clear at all if in any given situation a finite energy foliation exists. However, it turns out, that quite often they do.  
The first such result appeared in \cite{HWZ-convex} and was generalized further in \cite{HWZ-tight}.
These papers study tight Reeb flows on $S^3$. According to a classification result every positive tight contact form
$\lambda$ on $S^3$ is, after a smooth change of coordinates, of the form $f\lambda_0$, where $f:S^3\rightarrow (0,\infty)$ is a smooth map
and $\lambda_0= \frac{1}{2}[q\cdot dp - p\cdot dq]$ is the standard contact form on $S^3$ whose associated contact structure is the
line bundle of complex lines in $TS^3$, where $S^3$ is seen as the unit sphere in ${\mathbb C}^2$ and $q+ip$ are the coordinates.
The flow lines of $X$ on $S^3$ associated to $f\lambda_0$ are conjugated to the Hamiltonian flow on the energy surface 
$$
N=\{ \sqrt{f(z)}z\ |\ |z|=1\},
$$
where we indentify ${\mathbb C}^2$ with ${\mathbb R}^4$ via $q+ip\rightarrow (q_1,p_1,q_2,p_2)$, and the latter has the standard symplectic form
$\omega= dq_1\wedge dp_1 + dq_2\wedge dp_2$. So we can formulate the results in terms of star-shaped energy surfaces in ${\mathbb R}^4$,
i.e. energy surfaces bounding domains which are star-shaped with respect to $0$.

\begin{theorem}[Hofer-Wysocki-Zehnder,\cite{HWZ-convex}]\label{T:existence_of_foliation_on_generic_tight_S^3}
Assume that $N$ bounds a strictly convex domain containing zero and is equipped with the contact form $\lambda_0|N$.
For a generic admissible complex multiplication $J$ on the associated contact structure there exists a finite energy foliation
with precisely one leaf which is a cylinder over a periodic orbit and all other leaves are finite energy planes asymptotic to it.
\end{theorem}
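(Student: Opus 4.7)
The plan is to build the foliation in three stages: (i) single out a distinguished Reeb orbit $P$ that will serve as the binding (the unique spanning orbit), (ii) construct one finite energy plane asymptotic to $P$, and (iii) propagate this plane through a one-parameter family of embedded finite energy planes whose images, together with $\mathbb{R}\times P$, fill out all of $\mathbb{R}\times N$.

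\emph{Stage (i).}  Strict convexity of the domain bounded by $N$ yields a pinching estimate for the linearized Reeb flow: the Hessian of the defining Hamiltonian along any closed characteristic is positive definite, which forces the Conley-Zehnder index of every periodic Reeb orbit to be at least $3$.  Minimizing the action over all closed Reeb orbits produces a simple orbit $P$ with $\mu_{CZ}(P)=3$, which moreover can be shown to be unknotted in $N\cong S^3$ and to have self-linking number $-1$.  These topological constraints are exactly what is needed for a finite energy plane asymptotic to $P$ to have Fredholm index $2$ and to be embedded when it exists.

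\emph{Stage (ii).}  To produce an initial finite energy plane we exploit the symplectic filling of $N$ by the bounded strictly convex domain $W$, equipped with the standard symplectic form.  Choose a tame almost complex structure on $W$ which near $N$ matches $\tilde{J}$ under the symplectization coordinates, and consider the moduli space of pseudoholomorphic disks in $W$ with boundary on $N$ through a generic interior point, starting from a local Bishop family.  Stretching the neck along $N$ and invoking SFT compactness yields a holomorphic building whose top level, by the action argument and because every orbit has $\mu_{CZ}\geq 3$, consists of a single finite energy plane in $\mathbb{R}\times N$ asymptotic to $P$ (any further breaking would produce an index-negative plane, which is excluded in this setting by the asymptotic index formula of Hofer-Wysocki-Zehnder).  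This yields the initial plane $\tilde{u}_0$.

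\emph{Stage (iii).}  For generic admissible $J$, standard Fredholm theory for somewhere injective curves, combined with automatic transversality in dimension four (which applies to planes asymptotic to odd-index orbits), shows that the moduli space $\mathcal{M}$ of finite energy planes asymptotic to $P$ is a smooth $2$-manifold; quotienting by the $\mathbb{R}$-action on the target yields a smooth $1$-manifold $\mathcal{M}/\mathbb{R}$.  SFT compactness together with the index obstruction above rules out any breaking, so $\mathcal{M}/\mathbb{R}$ is compact, hence a disjoint union of circles.  Positivity of intersections for pseudoholomorphic curves in dimension four, together with the Siefring-type asymptotic analysis near $P$ (using the winding of the asymptotic operator dictated by the self-linking number $-1$), implies that each plane is embedded, disjoint from all its $\mathbb{R}$-translates, and that distinct elements of $\mathcal{M}/\mathbb{R}$ have disjoint images.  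An open-closed argument on $(\mathbb{R}\times N)\setminus(\mathbb{R}\times P)$ then shows that the union of these images is a clopen subset, hence the whole complement; consequently $\mathcal{M}/\mathbb{R}\cong S^{1}$ and the leaves together with the cylinder $\mathbb{R}\times P$ provide the required finite energy foliation.

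The main difficulty is stage (ii): controlling the neck-stretching degeneration to guarantee that the top level of the limiting building is precisely one finite energy plane asymptotic to the action-minimizing orbit $P$, and not some more complicated configuration involving other orbits or multiple covers.  This is where the convexity hypothesis, through the uniform lower bound $\mu_{CZ}\geq 3$ on all orbits, plays its essential role by excluding unwanted index-zero and index-one components in the limit.
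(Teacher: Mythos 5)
The paper cites this result from~\cite{HWZ-convex} without reproducing a proof, so the comparison is with the argument in Hofer--Wysocki--Zehnder's 1998 Annals paper rather than with anything in the present source. Your three-stage outline captures the global structure of that argument: identify a distinguished unknotted orbit with self-linking $-1$ and $\mu_{CZ}=3$, produce one embedded finite energy plane asymptotic to it, and then sweep out the foliation via Fredholm theory, positivity of intersections, and a compactness/openness argument in $\mathcal{M}/\R$. Stage~(iii) in particular is essentially what HWZ do.

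There are two points where your route genuinely departs from theirs, and both are worth flagging. First, in Stage~(ii) you obtain the seed plane by neck-stretching the symplectic filling $W$ and invoking SFT compactness. HWZ instead use the earlier Bishop-family filling method in the spirit of Eliashberg--Hofer: one starts from an elliptic singular point of the characteristic foliation of a small embedded disk in $N$, fills by pseudoholomorphic disks with totally real boundary in $\R\times N$, and deduces that the gradient of the family must blow up, bubbling off a finite energy plane. The neck-stretching route is cleaner and more modern (the compactness theorem underlying it, \cite{BEHWZ}, postdates \cite{HWZ-convex} by several years), but it is a genuinely different mechanism for producing the first plane; both work, and yours arguably gives slightly easier control of the limiting building given the uniform index bound.

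Second, in Stage~(i) you say that action minimization yields an orbit with $\mu_{CZ}(P)=3$ which then ``can be shown'' to be unknotted with self-linking $-1$. This inverts the order of logic in \cite{HWZ-convex} and overstates what action minimization gives directly. Convexity guarantees $\mu_{CZ}\geq 3$ for all orbits, but the precise index, the unknottedness, and the self-linking number of the binding orbit are not derived from variational considerations in advance; they emerge from the asymptotic analysis of the first finite energy plane once it has been constructed (the winding number estimates near the puncture force the asymptotic orbit to have these properties). Stated this way, your Stage~(i) looks like an independent topological input when it is really an output of Stage~(ii). This does not break the argument, since the index lower bound is all you actually need going into the degeneration analysis, but as written the dependence is misleading and a careful reader would stumble on it.
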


After projecting down to the $3$-manifold $N$, each plane-like leaf gives rise to a disk-like surface 
of section with boundary the spanning orbit{, and finite volume}.  The convexity implies that the generalized Conley-Zehnder 
index of the spanning orbit is at least $3$. This implies that the orbits nearby intersect all the 
leaves of the projected foliation transversally enough that all orbits, besides the spanning orbit, 
hit each leaf in forwards and backwards time.  Thus, fixing any leaf, we obtain a well defined return map,  which is an area-preserving diffeomorphism of the open disk {}.  By an important result of Franks we obtain that
the disk map, if it has at least two periodic points must have infinitely many periodic orbits. Hence we 
obtain the following corollary.

\begin{figure}[thb]
\centering
\psfrag{t}{$3$}
\psfrag{d}{$2$}
\psfrag{cp}{$C^+$}
\psfrag{cm}{$C^-$}
\includegraphics[scale=0.6]{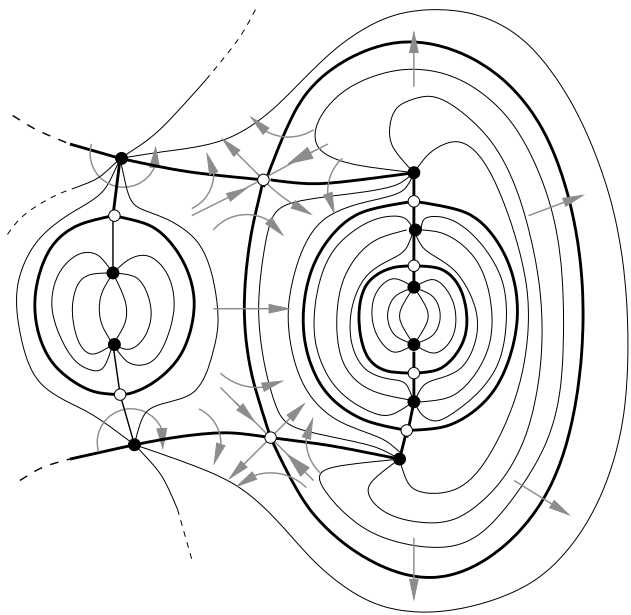}
\caption{A global system of surfaces of section of $S^3$, obtained as the  
projection of a finite energy foliation of $\R\times S^3$.  The $3$-sphere is viewed as 
${\mathbb R}^3\cup \{ \infty\}$, and the figure shows the trace of the surfaces of 
section cut by a plane.  The dots represent the spanning periodic orbits; they are 
perpendicular to the page and two dots belong to the same periodic orbit.  
The white dots represent periodic orbits of index $2$ and the black dots
periodic orbits of index $3$. The leaves are disk-like and annuli-like. The rigid 
surfaces are represented by bold curves. The grey arrows indicate the Reeb flow.}
\label{Fig14}
\end{figure}

\begin{corollary}[Hofer-Wysocki-Zehnder]
On an energy surface in ${\mathbb R}^4$ which bounds a strictly convex bounded domain, we have either precisely two geometrically distinct
periodic orbits or infinitely many.
\end{corollary}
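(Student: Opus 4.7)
The plan is to reduce the counting of Reeb periodic orbits on the energy surface $N$ to the dynamics of an area-preserving return map on a two-dimensional disk, and then to apply Franks' theorem on periodic points of area-preserving disk maps.

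First, I would invoke Theorem \ref{T:existence_of_foliation_on_generic_tight_S^3} to produce, for a generic admissible complex multiplication $J$, a finite energy foliation $\tilde{\mathcal F}$ of $\R\times N$ consisting of a single cylindrical leaf over a periodic orbit $P$ together with a family of finite energy planes asymptotic to $P$. Projecting via Proposition \ref{P:global_system_of_surfaces_of_section} yields a global system of surfaces of section of $N$ whose non-binding leaves are embedded open disks bounded by $P$. The strict convexity hypothesis forces the generalized Conley-Zehnder index of $P$ to be at least $3$, and this index bound is exactly what guarantees that, for a chosen leaf $D$, every Reeb trajectory other than $P$ meets $\mathrm{int}\,D$ transversally in both forward and backward time. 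Thus $D$ is a genuine global surface of section on its own.

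Next I would analyze the first return map. The Reeb flow induces a smooth diffeomorphism $\psi:\mathrm{int}\,D\to\mathrm{int}\,D$; since $d\lambda$ restricts to a non-degenerate $2$-form of finite total area on $D$ (as recorded in the paragraph following Proposition \ref{P:global_system_of_surfaces_of_section}), $\psi$ preserves area. The asymptotic analysis of finite energy planes approaching $P$ (Theorem \ref{T:feplanes_approach_periodic_orbits} and its refinements) allows one to extend $\psi$ continuously to $\overline D$ as a homeomorphism that rotates $\partial D$ by a nontrivial angle determined by the rotation number of $P$. Under this extension, periodic orbits of the Reeb flow on $N$ distinct from $P$ are in bijection with periodic orbits of $\psi$ lying in $\mathrm{int}\,D$, where a $k$-periodic point of $\psi$ corresponds to a Reeb orbit meeting $D$ exactly $k$ times per period.

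Finally I would conclude by fixed-point theory. A Brouwer-type argument for area-preserving homeomorphisms of the disk that rotate the boundary produces at least one interior periodic point of $\psi$ (equivalently, one can double $D$ across $P$ to an area-preserving homeomorphism of $S^2$ and apply the Brouwer translation/Asimov-type results), giving at least one Reeb periodic orbit beyond $P$ and hence the lower bound of two. If there were a third Reeb periodic orbit, $\psi$ would possess two distinct periodic orbits in $\mathrm{int}\,D$, and Franks' theorem, in the form quoted in the excerpt, would force infinitely many periodic points of $\psi$, and hence infinitely many periodic orbits of the Reeb flow on $N$.

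The main obstacle is not conceptual but technical: carrying out the second step rigorously requires controlling the asymptotic geometry of the finite energy planes near $P$ sharply enough to verify simultaneously that (i) $D$ is a global and not merely local section (this is where the index $\geq 3$ really enters), (ii) $\psi$ extends continuously to $\overline D$ with the correct rotation on the boundary, and (iii) the hypotheses of Franks' theorem are met by the extended map. The hard existence result is Theorem \ref{T:existence_of_foliation_on_generic_tight_S^3}; everything afterwards is a careful combination of the asymptotic analysis of pseudoholomorphic curves with two-dimensional area-preserving dynamics.
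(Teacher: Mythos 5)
Your proposal follows the paper's own route essentially verbatim: invoke Theorem \ref{T:existence_of_foliation_on_generic_tight_S^3} for a generic $J$, project to a global system of disk-like surfaces of section, use the index $\geq 3$ consequence of strict convexity to ensure each leaf is a genuine global section so the return map $\psi$ is an area-preserving diffeomorphism of the open disk, and then feed $\psi$ to Franks' theorem. The paper is terse about the lower bound of two and about boundary behavior; you supply the standard fixed-point argument (Brouwer/doubling to $S^2$) guaranteeing at least one interior periodic point of $\psi$ and hence a second Reeb orbit, and you sketch the extension of $\psi$ to $\overline{D}$ -- neither of which is a departure from the paper's approach (Franks' theorem as quoted in the paper already applies to the open disk, so the boundary extension is not strictly needed), just a filling-in of details the paper leaves implicit.
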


We would like to emphasize that no kind of genericity is assumed.

{When we go to the most general case, namely that of an energy surface bounding a starshaped domain, we in general still need some
genericity assumption}\footnote{{This is only a technical assumption and one should be able to remove it. However, it might not be so easy to draw
the same conclusions about the dynamics of the Reeb vector field.}}.  For example assuming that all periodic orbits are non-degenerate and the stable and unstable manifolds
of hyperbolic orbits are transversal where they intersect.  This can always be obtained by a $C^\infty$-small perturbation of the energy surface.
Alternatively we may consider generic contact forms $f\lambda_0$ on $S^3$,  so that the associated star-shaped energy surface 
has the previously described genericity properties.

\begin{theorem}[Hofer-Wysocki-Zehnder,\cite{HWZ-tight}]
Let $\lambda=f\lambda_0$ be a generic contact form on $S^3$. Then for a generic complex multiplication on $\xi$ with associated ${\mathbb R}$-invariant
almost complex structure $\tilde{J}$ on ${\mathbb R}\times S^3$, there exists an associated finite energy foliation. Besides finitely many cylinders over periodic orbits
the other leaves are parameterized by punctured finite energy spheres with precisely  one positive puncture, but which can have several negative
punctures. The asymptotic limits at the punctures are simply covered.
\end{theorem}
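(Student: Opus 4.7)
The plan is to deform from the known convex case (Theorem \ref{T:existence_of_foliation_on_generic_tight_S^3}) to the given generic contact form through a generic homotopy, tracking how the finite energy foliation evolves.

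First, I would choose a generic smooth path $\lambda_s=f_s\lambda_0$ of contact forms on $S^3$ with $f_0\lambda_0$ describing a strictly convex energy surface and $\lambda_1=\lambda$, together with a generic path of compatible $\R$-invariant almost complex structures $\tilde{J}_s$. At $s=0$ the foliation $\tilde{\mathcal F}_0$ is provided by Theorem \ref{T:existence_of_foliation_on_generic_tight_S^3}: one cylinder over a distinguished orbit together with finite energy planes asymptotic to it. The key observation is that in each leaf, the positive puncture captures essentially all of the $d\lambda$-energy by Stokes' theorem, so \emph{every} leaf in $\tilde{\mathcal F}_0$ has exactly one positive puncture. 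I would set up the moduli spaces of embedded, connected, genus zero, finite energy $\tilde{J}_s$-holomorphic curves in $\R\times S^3$ with exactly one positive puncture and prescribed asymptotic orbits, modulo reparameterization but not $\R$-translation, and verify via standard Fredholm/automatic transversality arguments (using genericity in $(f,\tilde{J})$ and the fact that non-degenerate asymptotics on $S^3$ give well-behaved Cauchy-Riemann operators) that these moduli spaces are smooth manifolds of the expected dimension, of dimension zero or one for the classes that foliate.

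Next I would run a continuation argument: the set of $s\in[0,1]$ at which $\lambda_s$ carries a finite energy foliation of the required type is open, by the implicit function theorem and the fact that embedded, transversally cut out holomorphic curves persist. Compactness in $s$ is controlled by SFT-compactness of \cite{BEHWZ}: along a subsequence $s_k\to s_*$ each leaf converges to a holomorphic building whose levels are again finite energy punctured spheres. Because $\R\times S^3$ contains no non-constant closed holomorphic spheres (by exactness of the symplectization), no bubbling off of spheres occurs; the only degeneration is breaking along Reeb orbits. Crucially, the single-positive-puncture property is preserved under breaking: one top-level component retains the one positive puncture, and all other components of the building have only negative punctures, which forces them (by $\int u^*d\lambda\ge 0$ and positivity of the period of the positive end) to be cylinders over orbits. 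Positivity of intersections for pseudoholomorphic curves in the four-manifold $\R\times S^3$ then guarantees that the pieces of the broken configuration still have pairwise disjoint images and fit into a foliation.

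The hand-over across a bifurcation parameter $s_*$ is the delicate step, and I expect this to be the main obstacle. Generically only one leaf breaks at $s_*$, and only once, yielding a two-level building whose top is a pair-of-pants-like sphere with one positive and several negative punctures and whose bottom consists of finite energy planes/spheres attached at the new negative orbits. I would use a gluing theorem to construct, for $s$ slightly past $s_*$, a new family of embedded leaves close to this broken configuration, thereby repairing the foliation; the new spanning orbit set is obtained from the old one by adding the breaking orbit(s). One must verify that the gluing is unique and that its images foliate a neighborhood of the broken curve in $\R\times S^3$, for which again Siefring's intersection theory and automatic transversality at the asymptotic ends are essential.

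Finally I would check that the process terminates: the number of bifurcations is finite because along a generic path the symplectic action of spanning orbits stays bounded (by the $d\lambda$-energy, which in turn is controlled by the asymptotic periods involved) and only orbits up to some bounded action can appear; non-degeneracy of $\lambda$ ensures this is a finite set. The resulting foliation at $s=1$ is then the desired finite energy foliation for $\lambda$, and by construction each non-cylindrical leaf is a punctured sphere with exactly one positive puncture and finitely many negative punctures, with simply covered asymptotic limits (simple coverage is automatic from the embeddedness of the leaves together with genericity: a multiply covered asymptotic would force either non-embeddedness or a coincidence of leaves, contradicting the foliation property).
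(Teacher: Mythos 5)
Note first that the paper under review does not prove this theorem: it is cited from \cite{HWZ-tight} and stated without proof, so there is no internal proof to compare against. Judged against the argument actually carried out in that reference, your high-level strategy is the right one --- the proof of \cite{HWZ-tight} is indeed a continuation argument from a model case (they start from an irrational ellipsoid, which is strictly convex, so your appeal to Theorem \ref{T:existence_of_foliation_on_generic_tight_S^3} is the same starting point), tracking the foliation along a generic homotopy of contact forms and almost complex structures, with compactness for the breaking and gluing for the repair, and positivity of intersections throughout. So the skeleton is correct.

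That said, the sketch has genuine gaps at exactly the hard steps. Your termination argument is circular as written: you bound the action of the spanning orbits by the $d\lambda$-energy and then bound that by ``the asymptotic periods involved,'' but the asymptotic period \emph{is} the action of the orbit. What is actually needed is a uniform a priori bound on the $E$-energy of all leaves along the path, coming from the period of the positive asymptotic of the (topologically constrained) leaf class, and one must show this stays in a compact range over $s\in[0,1]$; together with non-degeneracy this yields finiteness of the relevant orbit set, and this is one of the substantive pieces of \cite{HWZ-tight}, not a one-liner. More seriously, the ``repair the foliation after breaking'' step is where essentially all the difficulty lies and is glossed over: gluing a single broken building back into a single embedded curve is far from showing that the \emph{family} of glued curves, together with the surviving leaves, still sweeps out all of $\R\times S^3$ as a smooth $\R$-invariant foliation. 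For this \cite{HWZ-tight} sets up a combinatorial notion of stable finite energy foliation, proves a compactness theorem for sequences of entire foliations (not merely sequences of curves), and leans on automatic transversality in dimension four together with intersection theory to control the bubble trees; none of this is visible in your sketch. Finally, one small factual slip: it is not true that the non-top-level components of a broken building ``have only negative punctures'' --- every component below the top level has at least one positive puncture matching the level above. The correct statement, which follows from the Stokes argument plus the tree structure, is that each component of the building still has \emph{at most one} positive puncture, and this is what forces the cylinders-over-orbits description you want.
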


\begin{figure}[thb]
\centering
\psfrag{3}{$3$}
\includegraphics[width=0.6\textwidth]{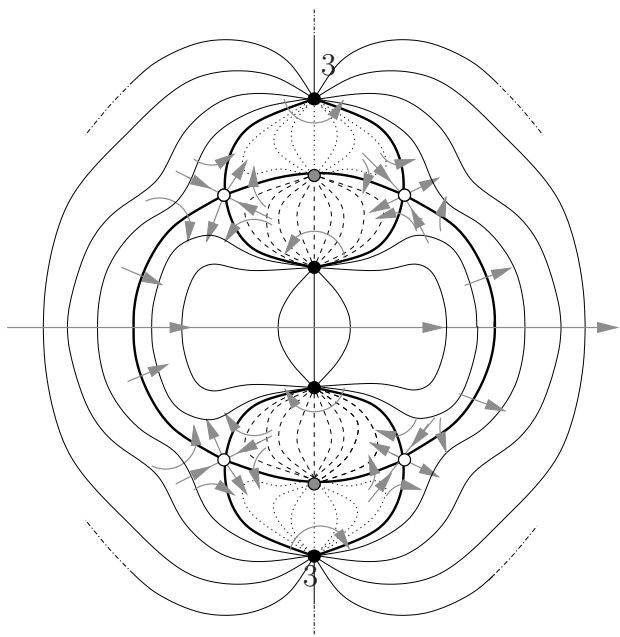}
\caption{A second example, illustrating a finite energy foliation associated to $S^3$.  
The grey dots represent a periodic orbit of index $1$, the white dots periodic orbits of index $2$ and 
the black dots periodic orbits of index $3$.  The leaves are again disk-like and annuli-like
surfaces.  The rigid surfaces are represented by bold curves. Dotted and dashed surfaces represent 
families of annuli-like surfaces connecting periodic orbits
of index $3$ with periodic orbits of index $1$. Thin curves represent
disk-like surfaces asymptotic to periodic orbits with
index $3$. The grey arrows picture the flow of the Reeb vector field.}
\label{Fig16}
\end{figure}

The precise result also contains some more technical information about the Conley-Zehnder indices of the periodic spanning orbits and we refer the reader to
\cite{HWZ-tight}.  The Figures \ref{Fig14} and \ref{Fig16}, taken from \cite{HWZ-katok}, describe some of the structure.  

From this result again one can derive that there are either two or infinitely many periodic orbits, given 
the described genericity.  We now sketch the proof of this.   

If we have only one spanning obit, then we have a \emph{global disk-like surface of section}, meaning a surface 
of section, {homeomorphic to the open disk}, with a 
well defined return map, {preserving a finite volume form.} The assertion then follows from Franks' theorem.  Assume that we have two spanning orbits. In that case we must 
have at least one hyperbolic spanning orbit
of period $T$ say.  One can show quite easily that there exists a heteroclinic chain connecting several hyperbolic spanning orbits.  Using our genericity assumption we can use symbolic dynamics
to construct infinitely many periodic orbits.  The heteroclinic chain follows immediately from the fact 
that we only have finitely many spanning orbits
and that the stable and unstable manifold of a hyperbolic orbit intersect the nearby leaves of the finite energy foliation in loops which have $\lambda$-integral
equal to the period $T$.  Essentially for area reasons the assertion follows. The reader should take any of the two Figures \ref{Fig14} or \ref{Fig16} and try to carry out the argument.

In recent papers Hryniewicz and Hryniewicz-Salomoa, \cite{Umberto,Umberto1,UmbertoPedro} have been able to give a necessary and sufficient condition for 
when a sphere-like energy surface possesses a global disk-like surface of section.  

Recent work by Albers, Frauenfelder, van Koert and Paternain,  \cite{AFKP} and Cieliebak, Frauenfelder and van Koert, \cite{CFvK}
make it feasible to use finite energy foliations in the study of the classical restricted circular planar three-body problem.

Finally let us note that finite energy foliations have important applications in contact geometry as well, see \cite{Wendl,Wendl1,Wendl2}.

\section{Holomorphic Curves and Disk Maps}\label{S:holomorphic_curves_and_disk_maps}
We saw in the last section, Theorem \ref{T:existence_of_foliation_on_generic_tight_S^3}, that finite energy 
foliations exist for generic Reeb flows on the tight $3$-sphere.  It turns out that not just one, 
but many such foliations can be constructed if we replace the $3$-sphere 
with a solid torus and restrict to generic Reeb flows which have no contractible periodic 
orbits.  

Any area preserving diffeomorphism of the disk can be put into this framework, with different iterates 
giving rise to genuinely different global systems of surfaces of section.  The question arises how one can 
profit, dynamically speaking, from this perspective. One could hope that the holomorphic curves 
provide a book-keeping tool for tracking the history and future of the orbits of a disk map.  

As a first step in this direction, we outline how these can be used to prove the Poincar\'e-Birkhoff 
fixed point theorem, and a complementary result ``Theorem'' \ref{T:two_porbits_implies_id_or_twist} 
which says that maps with two periodic orbits have some iterate which either has a ``twist'' or is 
the identity map.  This latter statement might be new.  Combining these recovers, albeit currently with an 
additional boundary condition, a celebrated theorem of Franks \cite{Franks} in the smooth category, a more recent result of Franks and Handel \cite{Franks2}, along with sharp growth estimates on periodic orbits 
known to follow from results of Le Calvez \cite{Calvez}.  

These applications due to the first author depend on more recent developments \cite{Bramham2,Bramham3} in which it is shown how to construct finite energy foliations with a prescribed spanning orbit.  
See ``Theorem''  \ref{T:existence_of_foliations:_strongest_statement} for a precise statement.  This is a 
surprising novelty, implying that in general there are many more finite energy foliations than 
might naively be expected.  This result is labelled ``theorem'' as it is not completely written up.  
Results depending on this one are also labelled in quotation marks.  

This section explains the 
existence statements for finite energy foliations, and outlines proofs of the mentioned dynamical applications.

\subsection{Reeb-like Mapping Tori} 
Let $Z_{\infty}=\R\times D$ be the infinite tube equipped with coordinates $(\z,\x,\y)$.  There is a $\Z$-action  generated by the ``$1$-shift'' automorphism $\tau(\z,\x,\y)=(\z+1,\x,\y)$.  Quotienting out by some iterate 
$\tau^n$ gives us a solid torus $\T_n$ of ``length'' $n$.   We denote by $(z+n\Z,x,y)$ the induced coordinates 
on $\T_n$, and write $z$ to mean $z+n\Z$ when the context is clear.  

\begin{definition} 
For $n\in\N$, a \emph{Reeb-like mapping torus} will refer to a contact form $\lambda_n$ on $\T_n$, for which the disk 
slice $D_0:=\{z=0\}$ is a global surface of section for the Reeb flow, and which lifts to 
a contact form $\lambda_{\infty}$ on $Z_{\infty}$ having the following properties: it is invariant under 
pull-back by the 
$1$-shift automorphism, and has contact structure $\ker\{cdz+xdy-ydx\}$, some $c>0$.  
\end{definition}

By a \emph{global} surface of section is meant that the trajectory through any point in the solid torus passes 
through $D_0$ in forwards and backwards time, and does so transversely to $D_0$.  In particular, $d\lambda_n$ 
restricts to an area form on $D_0$, and the flow induces a first return map.  

From a Reeb-like mapping torus $\lambda_n$ on $\T_n$ one can lift and project to obtain 
a sequence of mapping tori 
\[
 (\T_{1},\lambda_{1}),\ (\T_{2},\lambda_{2}),\ (\T_{3},\lambda_{3})\ldots.  
\]
The first return map of the flow on $(\T_1,\lambda_1)$ is a diffeomorphism 
$\psi:D\rightarrow D$ preserving the area form $\iota^*d\lambda_1$, where $\iota:D\hookrightarrow D_0=\{0\}\times D$ 
is inclusion, and the first return map on $(\T_n,\lambda_n)$ is then the $n$-th iterate $\psi^n$.  

In this situation, we will say that $(Z_{1},\lambda_{1})$ \emph{generates} the disk map $\psi$, and that 
$(Z_n,\lambda_n)$ generates $\psi^n$.  

\begin{lemma}\label{L:existence_of_contact_form}
Let $\psi:D\rightarrow D$ be any orientation preserving, $C^{\infty}$-diffeo-morphism preserving $dx\wedge dy$.  
Then there exists a Reeb-like mapping torus $(\T_1,\lambda_1)$ having first return map $\psi$ for which  
$\iota^{*}d\lambda_1=dx\wedge dy$, the standard Euclidean volume form.  
\end{lemma}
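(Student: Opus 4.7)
The plan is to look for $\lambda_1$ in the ansatz form $\lambda_1 = g\cdot(c\,dz + x\,dy - y\,dx)$ for a large constant $c > 0$ and a smooth, positive, $1$-periodic function $g$ on $\T_1$. This ansatz automatically encodes the required contact structure $\ker(c\,dz + x\,dy - y\,dx)$ and makes the lift $\lambda_{\infty}$ to $Z_{\infty}$ invariant under the $1$-shift $\tau$. A direct computation gives $\iota^{*}d\lambda_1 = (r g_r + 2g)|_{z=0}\,dx\wedge dy$, so the normalization $\iota^{*}d\lambda_1 = dx\wedge dy$ is equivalent to $(r g_r + 2g)|_{z=0}\equiv 1$, which by smoothness of $g$ at the origin of $D_0$ forces $g|_{z=0}\equiv \tfrac{1}{2}$.

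Next, computing the Reeb vector field of the ansatz in polar coordinates one finds that the projection of its flow on $D$-slices, parametrized by $z$, is governed by
\[
\frac{dr}{dz} = \frac{c g_\theta - r^2 g_z}{r(r g_r + 2g)}, \qquad \frac{d\theta}{dz} = -\frac{c g_r}{r(r g_r + 2g)},
\]
and the first return map on $D_0$ is precisely its time-one flow. To realize the given $\psi$, I would use contractibility of $\mathrm{Symp}(D, dx\wedge dy)$ to fix a smooth Hamiltonian $H:\R/\Z\times D \to \R$ with flat endpoints at integers whose time-one flow equals $\psi$, and then seek $g$ so that the $z$-flow above matches $X_H$. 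This converts the problem to a pair of first-order PDEs for $g$ coupled to the constraints $g > 0$, $g$ is $1$-periodic in $z$, and $g|_{z=0}\equiv \tfrac{1}{2}$.

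The trivial case $\psi = \mathrm{id}$ is handled by the constant $g \equiv \tfrac{1}{2}$. For general $\psi$, my plan is to start from the perturbative ansatz $g = \tfrac{1}{2} + H/c + O(1/c^{2})$, which matches the equations to leading order for large $c$, and promote it to an exact solution via an implicit function theorem argument in an appropriate functional space, taking $c$ large in terms of a suitable $C^k$-norm of $H$ (or, if necessary, chaining several small-perturbation steps along a Hamiltonian isotopy from $\mathrm{id}$ to $\psi$). The flat-endpoint property of $H$ is what ensures the resulting $g$ is genuinely $1$-periodic in $z$, so that $\lambda_\infty$ descends to $\T_1$. The main obstacle is this analytic step: the two Reeb PDEs are coupled and nonlinear, and one must simultaneously maintain positivity of $g$, strict $1$-periodicity, and the normalization at $z=0$; taking $c$ large places the problem in the regime where linearization at the trivial solution is surjective onto the space of Hamiltonians on $D$, so the IFT applies.
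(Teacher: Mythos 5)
The paper does not prove this lemma; it is asserted with a citation to \cite{Bramham}, where it is established only under the additional hypothesis that $\psi$ restricts to a rotation on $\partial D$, so there is no in-paper argument to compare against and I review your outline on its own terms. Your setup is correct: the ansatz $\lambda_1=g\,(c\,dz+x\,dy-y\,dx)$ is exactly what the definition of a Reeb-like mapping torus forces, the identity $\iota^*d\lambda_1=(rg_r+2g)|_{z=0}\,dx\wedge dy$ is right, the normalization $g|_{z=0}\equiv\tfrac12$ is indeed pinned by smoothness at the origin, and your polar formulas for the $z$-parametrized projected Reeb flow agree with the Cartesian computation.

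The step ``seek $g$ so that the $z$-flow matches $X_H$'' is, however, not merely analytically delicate but impossible for $\psi\neq\operatorname{id}$. Since $L_R\,d\lambda_1=0$ and $i_R\,d\lambda_1=0$, the holonomy $\Phi^z:D_0\to D_z$ of the projected flow pulls back $\omega_z:=(rg_r+2g)|_z\,dx\wedge dy$ to $\omega_0=dx\wedge dy$. If the projected vector field equalled a $dx\wedge dy$-Hamiltonian field $X_{H_z}$ for every $z$, then each $\Phi^z$ would also preserve $dx\wedge dy$; combining the two facts gives $\omega_z=dx\wedge dy$ for all $z$, i.e.\ $rg_r+2g\equiv1$, which by the same ODE-plus-smoothness argument you used at $z=0$ forces $g\equiv\tfrac12$ and hence the trivial flow. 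So your coupled pair of first-order PDEs, together with the normalization, admits only the solution $g\equiv\tfrac12$; for any nontrivial $\psi$ the intermediate holonomies $\Phi^z$, $0<z<1$, necessarily distort the Euclidean area. The viable version of your strategy targets only the time-one map $\Phi^1_g$, which automatically lands in $\operatorname{Symp}(D,dx\wedge dy)$ because $\omega_1=\omega_0$, equals $\operatorname{id}$ at $g\equiv\tfrac12$, and whose linearization at $g=\tfrac12+K/c$ is (as your leading-order computation shows) onto Hamiltonian vector fields; the large-$c$ regime is then the right one. But the remaining analytic content --- an IFT or Nash--Moser scheme on the return-map functional with uniform-in-$c$ estimates, preserving the tangency of the Reeb field to $\partial Z_1$ (which imposes $(cg_\theta-g_z)|_{r=1}=0$), and keeping $g>0$, $rg_r+2g>0$ and $1$-periodicity --- is substantial, and it is precisely where the cited thesis restricts to the rotation-on-the-boundary case; it cannot be waved through as you have done.
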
 

A proof of this under the assumption that $\psi$ restricts to a rotation on the 
boundary of the disk is contained in \cite{Bramham}.  

Identifying $Z_n$ with $\R/n\Z\times D$, the product structure gives us a canonical way 
to assign linking numbers to pairs of homologous periodic orbits, and, along with the $S^{1}$-symmetry of 
the contact structure, canonically assign Conley-Zehnder indices to 
individual periodic orbits.  We explain this now.  

There is a canonical basis of $H_1(\partial Z_n;\Z)\simeq\Z\oplus\Z$ we will denote by $\{L_n,[\partial D]\}$, 
which are the unique elements represented by closed oriented loops of the form 
$\R/n\Z\times\{\textup{pt}\}$ and $\{\textup{pt}\}\times\partial D$ respectively.  We call $L_n$ the 
\emph{canonical longitude} on $Z_n$, and $[\partial D]$ the \emph{canonical meridian} on $Z_n$.  
With a little elementary algebraic topology, one can show that if $\gamma$ is an immersed closed loop 
in $Z_n$, disjoint from $\partial Z_n$, and homologous to $L_n$, then 
\[
			\{L_n,[\partial D]\}
\]
is also a basis for $H_1(Z_n\backslash\gamma(S^1);\Z)$ after applying the inclusion 
$\partial Z_n\hookrightarrow Z_n\backslash\gamma(S^1)$.  (This is not true if we 
replace homology groups by homotopy groups in case $\gamma$ is knotted).  Thus the following is well defined. 

\begin{definition}
Let $\gamma_{1},\gamma_{2}:S^{1}\rightarrow\T_n$ be two continously embedded closed loops having 
degree $1$ after projecting onto the $S^1$-factor of $Z_n=\R/n\Z\times D$.  Assume that their 
images are disjoint from each other and from $\partial\T_n$.  Then $\gamma_1$ determines an 
homology class in the complement of $\gamma_2$.  Define the  
\emph{linking number} to be the unique integer $\lk(\gamma_{1},\gamma_{2})$ such that $\gamma_1$ 
is homologous to 
\[
	L_n+\lk(\gamma_{1},\gamma_{2})[\partial D]\in H_1(\T_n\backslash\gamma_{2}(S^{1});\Z)
\]  
where $\{L_n,[\partial D]\}$ is the canonical basis, longitude and meridian, that we just defined.  
\end{definition}

\begin{figure}[hbt]
\psfrag{gamma1}{$\gamma_1$}
\psfrag{gamma2}{$\gamma_2$}
 \centering
 \includegraphics[scale=.41]{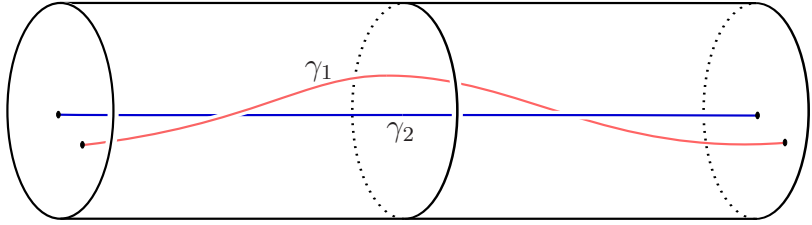}
 \caption{$\gamma_1$ and $\gamma_2$ represent two disjoint continuously embedded closed loops 
in the solid torus $Z_2$.  Their linking number is $\lk(\gamma_{1},\gamma_{2})=1$.}
 \label{F:geometrically distinct orbits}
\end{figure}

The linking number turns out to be symmetric, that is $\lk(\gamma_{1},\gamma_{2})=\lk(\gamma_{2},\gamma_{1})$. 
Moreover, with these conventions one has linear growth under iterates: if $\gamma_{i}^k:S^1\rightarrow Z_{kn}$ 
represents the unique lift of $\gamma_i$ to the longer mapping torus $(Z_{kn},\lambda_{kn})$, for 
$i\in\{0,1\}$, then $\lk(\gamma_{1}^k,\gamma_{2}^k)=k\!\cdot\!\lk(\gamma_{2},\gamma_{1})$ if $k\in\N$.  

Making use of further symmetry of the contact structure we also obtain canonical Conley-Zehnder indices 
for periodic orbits of the Reeb flow.  
Recall that usually, on a general manifold $M$ equipped with a contact form $\lambda$, at a periodic 
orbit $\gamma$ of the Reeb flow one requires a choice of symplectic trivialization, up to homotopy type, 
of the contact structure $\xi:=\ker\lambda$ along $\gamma$ to be able to assign a Conley-Zehnder index 
to $\gamma$.  In the cases at hand the contact manifold $Z_n$ is covered by $Z_{\infty}=\R\times D$ on 
which the contact structure $\xi:=\ker\lambda_{\infty}$ is invariant under the $\R$-action 
$c\cdot(\z,\x,\y)=(\z+c,\x,\y)$ (and hence also globally trivializable).  This descends to a circle 
action $c\cdot(z,x,y)=(z+c,x,y)$ on $Z_n$.  There is a unique 
homotopy class of trivializations on $Z_n$ admitting a representative which is invariant under 
this circle action.  

\begin{definition}\label{D:Conley-Zehnder_index}
Let $\gamma$ be a periodic orbit of the Reeb flow of $(Z_{n},\lambda_{n})$.  Then we take 
the \emph{Conley-Zehnder index} of $\gamma$ to be with respect to the unique homotopy class of global 
trivializations of the contact structure which admit an $S^1$-invariant representative. 
We will denote this by 
\[
							\mu(\gamma)\in\Z.  
\]
\end{definition}

\subsection{Notions of rotation number}\label{S:real_valued_rotation_numbers}
For a more sophisticated approach to rotation numbers of surface maps see 
\cite{Franks,Franks5,Franks2,LeCalvez3}, in which the area 
preserving property of the map is used to make sense of the rate at which \emph{almost all} orbits  
rotate about a given fixed point, via the Birkhoff ergodic theorem.  

For our discussion it will suffice to talk of the \emph{total} rotation number of a smooth area preserving 
diffeomorphism of the disk associated to its restriction to the boundary circle and associated to a 
periodic point, in the latter intuitively describing the infinitesimal rate at which points nearby 
rotate around the periodic orbit.  By total, we mean that the rotation numbers are real valued, as 
opposed to merely circle valued.  

Recall that any orientation preserving homeomorphism of the circle has a well defined value in $\R/\Z$ 
called its rotation number.   Any choice of lift to a homeomorphism of the real line can be assigned 
a real valued rotation number, but this depends on the choice of lift.  Similarly, any choice of 
homotopy of the circle map to the identity determines a unique lift and thus allows to assign a real 
valued rotation number.  Indeed, if $F:[0,1]\times S^1\rightarrow S^1$, is a homotopy from 
$\id=F(0,\cdot)$ to $f=F(1,\cdot)$, there is a unique lift $\bar{F}:[0,1]\times\R\rightarrow\R$ of $F$ 
satisfying $\bar{F}(0,\cdot)=\id_\R$, giving us a canonical lift $\bar{F}(1,\cdot)$ of $f$.  

More generally, if one has a diffeomorphism of a surface one can assign a circle valued rotation number 
to the restriction to any boundary component, and to each fixed point via the differential at the fixed 
point.  But we can obtain the more useful real valued rotation numbers if the map is isotopic to the 
identity and we 
choose such an isotopy class.  Moreover, this gives a canonical way to assign integer valued linking 
numbers to pairs of periodic orbits of the same period.

In the framework we are working with here, we are always in the situation of having a diffeomorphism 
of the disk for which we have chosen a mapping torus generating it as the first return map.  This is 
similar to fixing an isotopy class 
from the disk map to the identity, and indeed, the choice of a mapping torus allows us to define real 
valued rotation numbers (and linking numbers as we already saw).   Although for all of this it is 
unnecessary that the map be area preserving, we will nevertheless make use of this to make a short route 
to a workable definition.  

Consider a Reeb-like mapping torus $(Z_1,\lambda_1)$ generating a disk map $\psi:D\rightarrow D$.  
Let $f:\partial D\rightarrow \partial D$ denote the restriction of $\psi$ to the boundary.  
The choice of mapping torus gives us a canonical lift $\bar{f}:\R\rightarrow\R$ of $f$ as 
follows.   

Restricting the Reeb-flow to the boundary of the disk slice $D_0=\{0\}\times D\subset Z_1$, gives us a smooth map $\phi:\R\times\partial D_0\rightarrow\partial Z_1$.  Reparameterising if necessary, we may assume that all points in $\partial D_0$ have first return time $1$.  Then $\phi$ restricts to a map 
$\phi:[0,1]\times\partial D_0\rightarrow\partial\T_1$ whose projection onto the 
$\partial D$ factor of $\partial Z_1=\R/\Z\times\partial D$ gives us a homotopy 
\[
 		F:[0,1]\times\partial D\rightarrow\partial D
\]
from $F(0,\cdot)=\id$ to $F(1,\cdot)=f$.  As described above, $F$ has a unique lift to a homotopy 
of $\R$ starting at the identity and ending at a lift of $f$, which we take to be $\bar{f}$.

\begin{definition}\label{D:total_rot_number_on_boundary}
For each $n\in\N$ define the \emph{total rotation number of $\psi^n$ on the boundary} to be the real number 
\[
 			\Rot_{\psi^n}(\partial D):=\Rot(\psibar_n), 
\]
where $\psibar_n:\R\rightarrow\R$ is the canonical lift, just described, of $\psi^n:\partial D\rightarrow\partial D$ induced by the flow on the boundary of $\T_n$.  
\end{definition}
It is easy to show that for each $n\in\N$, $\Rot_{\psi^n}(\partial D)=n\Rot_\psi(\partial D)$.  

\begin{definition}
Let $n\in\N$.  For $p\in\Fix(\psi^n)$ we define the \emph{infinitesimal rotation number} of $\psi^n$ at $p$ to be 
\[
 		\Rot_{\psi^n}(p):=\frac{1}{2}\lim_{k\rightarrow\infty}\frac{\mu(\gamma_p^k)}{k}
\]
where $\gamma_p:S^1\rightarrow Z_n$ is the periodic orbit passing through the disk slice $D_0$ at the 
point $p$, and $\mu(\gamma)$ denotes the Conley-Zehnder index of $\gamma$ as described in 
\ref{D:Conley-Zehnder_index}. 
\end{definition}
Again, it is easy to show that for each $k\in\N$, $\Rot_{\psi^{kn}}(p)=k\Rot_{\psi^n}(p)$.  

\begin{definition}\label{D:twist_interval}
 Let $p\in\Fix(\psi^n)$ be an interior fixed point.  Define the \emph{twist interval} of $p$, as a 
fixed point of $\psi^n$, to be the open interval of real numbers  
\[
\Twist_{\psi^n}(p):=\Big(\min\{\Rot_{\psi^n}(p),\Rot_{\psi^n}(\partial D)\}\ ,\,\max\{\Rot_{\psi^n}(\partial D),\Rot_{\psi^n}(p)\}\Big). 
\]
This interval could be empty.  
\end{definition}
We emphasize, that all three definitions above are implicitely \emph{with respect to a choice of 
data} $( Z_{1},\lambda_{1})$, that is, a Reeb-like mapping torus generating the disk map $\psi$.  

\subsection{Finite Energy Foliations for Mapping Tori} 
In our discussion here of finite energy foliations associated to Reeb-like mapping tori, as opposed to 
more general three-manifolds (see definition \ref{D:finite_energy_foliation}), the leaves will come in 
only two forms; cylinders and half cylinders.  

Consider a Reeb-like mapping torus $(Z_n,\lambda_n)$.  Let $\Jtilde_n$ be an almost complex 
structure on $\R\times Z_n$ that is compatible with $\lambda_n$ in the 
sense described in section \ref{S:contact_forms_and_holomorphic_curves}. 

\begin{remark}
 It is possible to do everything that follows under the additional assumption that $\Jtilde_n$ 
is the lift of an almost complex structure $\Jtilde_1$ on $\R\times Z_1$ that is compatible with 
$\lambda_1$.  In other words, that the lift $\Jtilde_\infty$ to $\R\times Z_\infty$ 
is invariant under pull-back by the $1$-shift automorphism, or deck transformation,  $(a,\z,\x,\y)\mapsto(a,\z+1,\x,\y)$.  This is potentially a very useful symmetry, giving us 
positivity of intersections between leaves of foliations for different iterates.  But we 
do not use this for any of the arguments in this paper.  
\end{remark}

A \emph{cylinder} will refer to a finite energy pseudoholomorphic curve of the form 
\[
 	[S^2,i,\{0,\infty\},\u], 
\]
so called because $S^2\backslash\{0,\infty\}$ can be identified with $\R\times S^1$.  In other words, 
a cylindrical leaf in a 
finite energy foliaton $\Ftilde$ associated to a Reeb-like mapping torus $(\T_n,\lambda_n)$ with 
compatible almost complex structure $\Jtilde_n$, will mean the image of an embedded solution 
$\u:\R\times S^1\rightarrow\R\times\T_n$ 
to the non-linear Cauchy-Riemann equation, having finite $E$-energy.  

A \emph{half cylinder} leaf in $\Ftilde$ will refer to a finite energy pseudoholomorphic curve (with boundary) 
of the form 
\[
 	[D,i,\{0\},\v]
\]
where $D=\{z\in\C|\ |z|\leq1\}$, because $D\backslash\{0\}$ can be identified with the half infinite cylinder  
$[0,\infty)\times S^1$.  As this is our first reference to pseudoholomorphic curves with boundary, let us be 
more precise.  Any representative of a half cylinder leaf can be identified with an embedded solution 
\[
 		\v=(b,v):[0,\infty)\times S^1\rightarrow\R\times\T_{n}
\]
to the differential equation $T\v\circ i=\Jtilde_n\circ T\v$, 
having finite energy, and satisfying the  boundary condition, that there there exists a constant 
$c\in\R$ such that 
\[
\v(\{0\}\times S^1)\subset\{c\}\times\partial\T_{n}.  
\]
The constant $c$ depends on 
the leaf.  Indeed, if $F=\v([0,\infty)\times S^1)$ is a leaf with boundary in $\{c\}\times\partial\T_{n}$, 
then translation in the $\R$-direction to $F_{c'}$ is a leaf with boundary in $\{c+c'\}\times\partial\T_{n}$.  

For the elliptic theory to work well, a pseudoholomorphic curve with boundary is typically required to 
have each boundary component lie in a prescribed surface in $\R\times M$ having suitable properties.  
For example surfaces of the form $\{\hbox{const}\}\times L$ where $\lambda$ restricts to a closed 
form on $L$, is one possibility.  This is the situation we work with here, where $L=\partial\T_{n}$.  

A finite energy foliation $\Ftilde$ associated to a Reeb-like mapping torus $(Z_{n},\lambda_{n})$, 
must have a non-empty collection of half cylinder leaves.  These determine a unique element in 
$H_1(\partial\T;\Z_{n})$.  That is, if 
$\v=(b,v):[0,\infty)\times S^1\rightarrow\R\times\T_{n}$ represents a leaf, then 
the restriction $v(0,\cdot):S^1\rightarrow\partial\T_{n}$ is a closed loop representing an homology class 
that is the same for all leaves in $\Ftilde$.  This homology class we will refer to as the \emph{boundary condition} 
of $\Ftilde$.  

We can visualize the finite energy foliations of a Reeb-like mapping torus as in Figures  
\ref{F:example_of_a_GSSS_for_a_mappingtorus} and \ref{F:foliaton_of_mapping_torus_and_cross-section}.  

\begin{figure}[hbtp]
\begin{center}
\includegraphics[scale=.47]{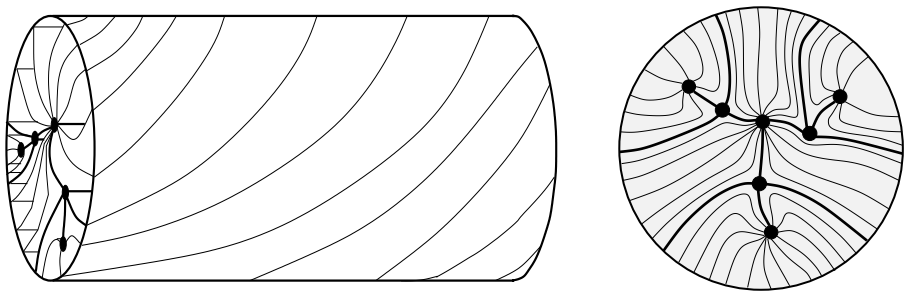} 
\end{center}
\caption{A global system of surfaces of section of a Reeb-like mapping torus.  
On the right a disk like cross-section, where the spanning 
orbits are the dots.  This can be compared with Figures \ref{Fig14} and \ref{Fig16}, although here each 
spanning orbit corresponds to a single dot rather than a pair.}
\label{F:example_of_a_GSSS_for_a_mappingtorus}  
\end{figure}    
\begin{figure}[hbt]
\begin{center}
\includegraphics[scale=.3]{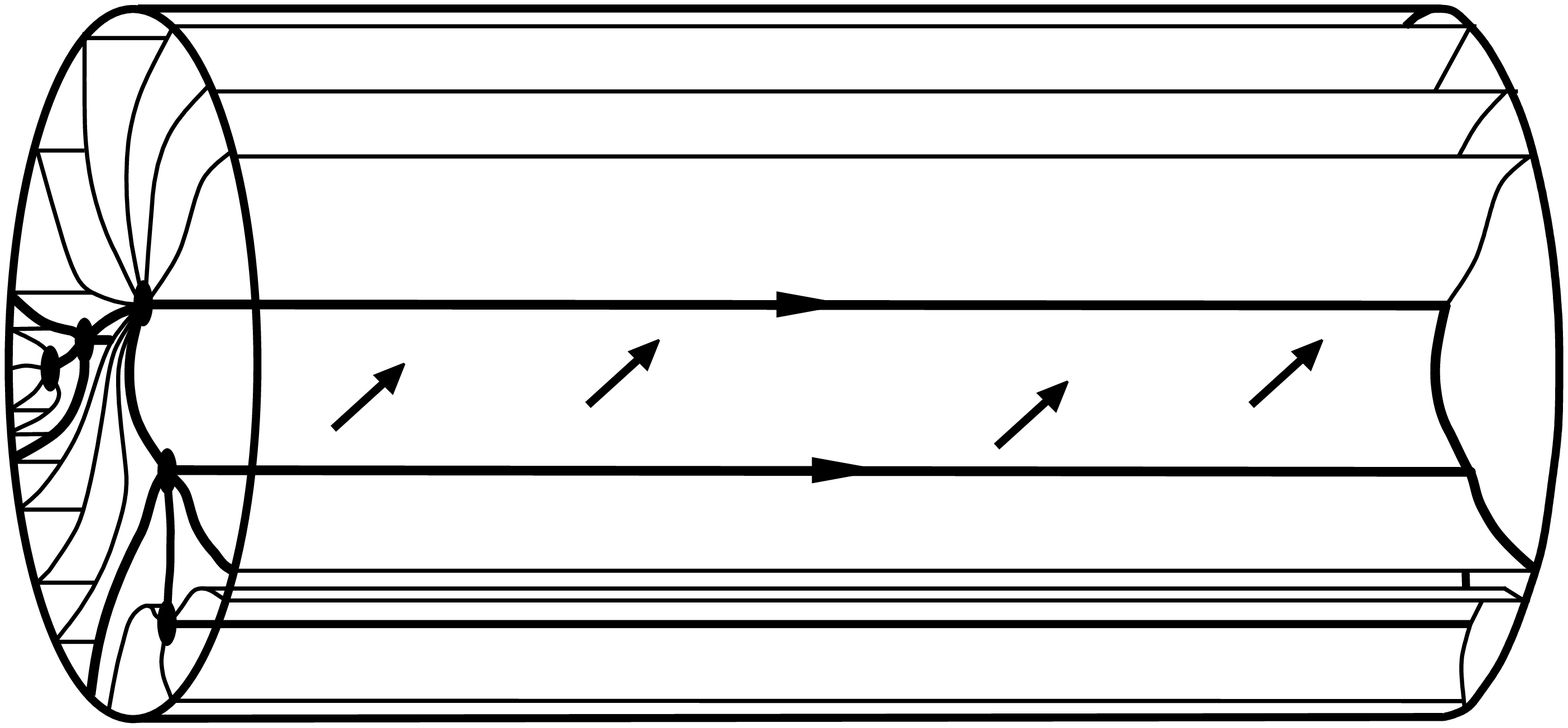}
\vspace{4mm}
\caption{An opened out view of a global system of surfaces of section of a mapping torus.  Inside, 
we see that the closure of each leaf is an embedded copy of $S^1\times[0,1]$.  The flow is transversal 
to the interior of each leaf, and tangent to those components of the boundary which lie in the 
interior of the solid torus.  These are the spanning orbits.}
\label{F:foliaton_of_mapping_torus_and_cross-section}
\end{center}
\end{figure}

The boundary condition can be described more succinctly in terms of a single integer.  

\begin{definition}\label{D:boundary_condition}
 Let $\Ftilde$ be a finite energy foliation associated to a Reeb-like mapping torus $(\T_{n},\lambda_{n})$.  
We will say that $\Ftilde$ has \emph{boundary condition $k\in\Z$} if every half cylinder leaf $F\in\Ftilde$ has boundary 
representing the homology class 
\[
		L_{n}+k[\partial D]\in H_1(\partial\T_{n};\Z)
\]
where $L_{n}$ and $[\partial D]$ are the canonical longitude and meridian introduced earlier.  
\end{definition}

A variety of boundary conditions are illustrated in figure \ref{F:different_boundary_conditions}.  

\begin{figure}[htb]
\begin{center}
\includegraphics[scale=.16]{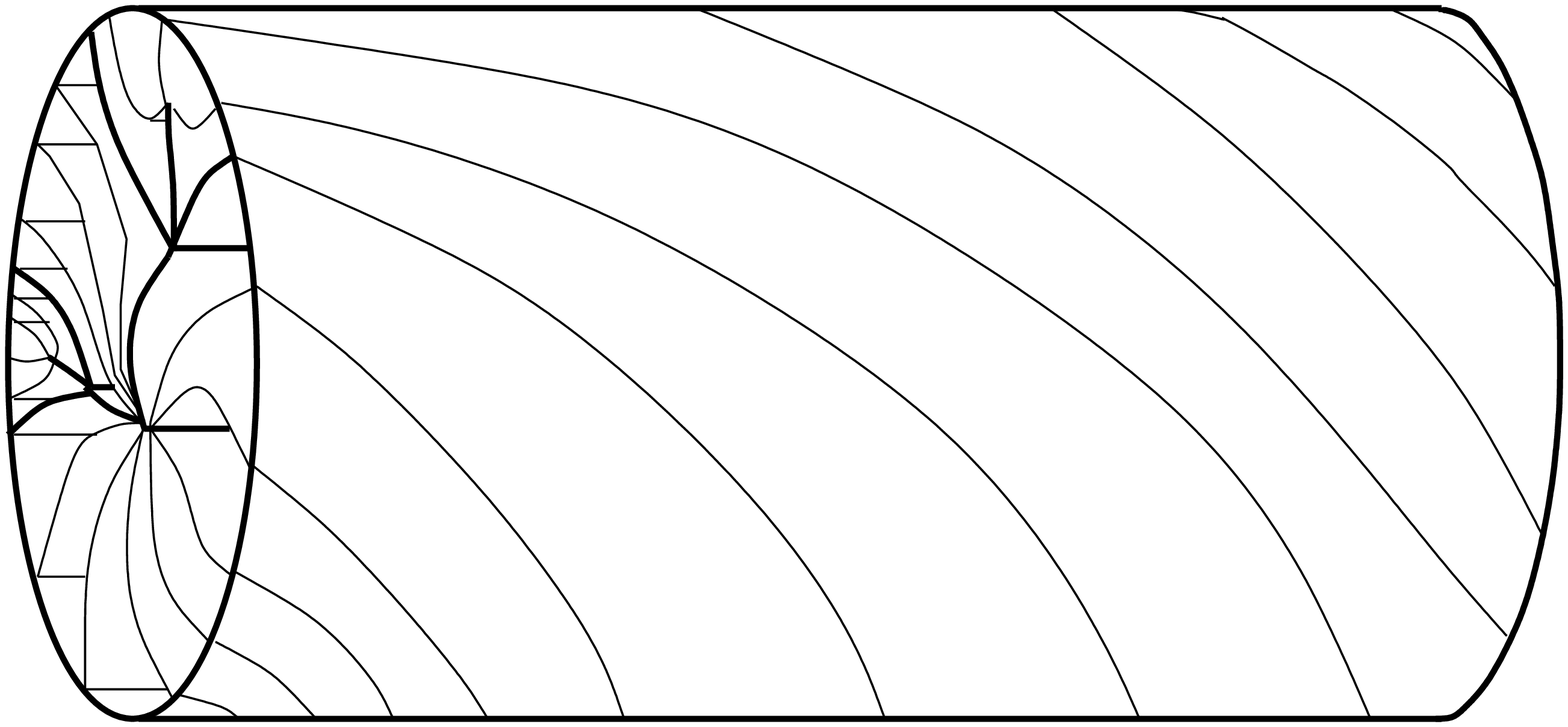}\hspace{3mm} 
\includegraphics[scale=.16]{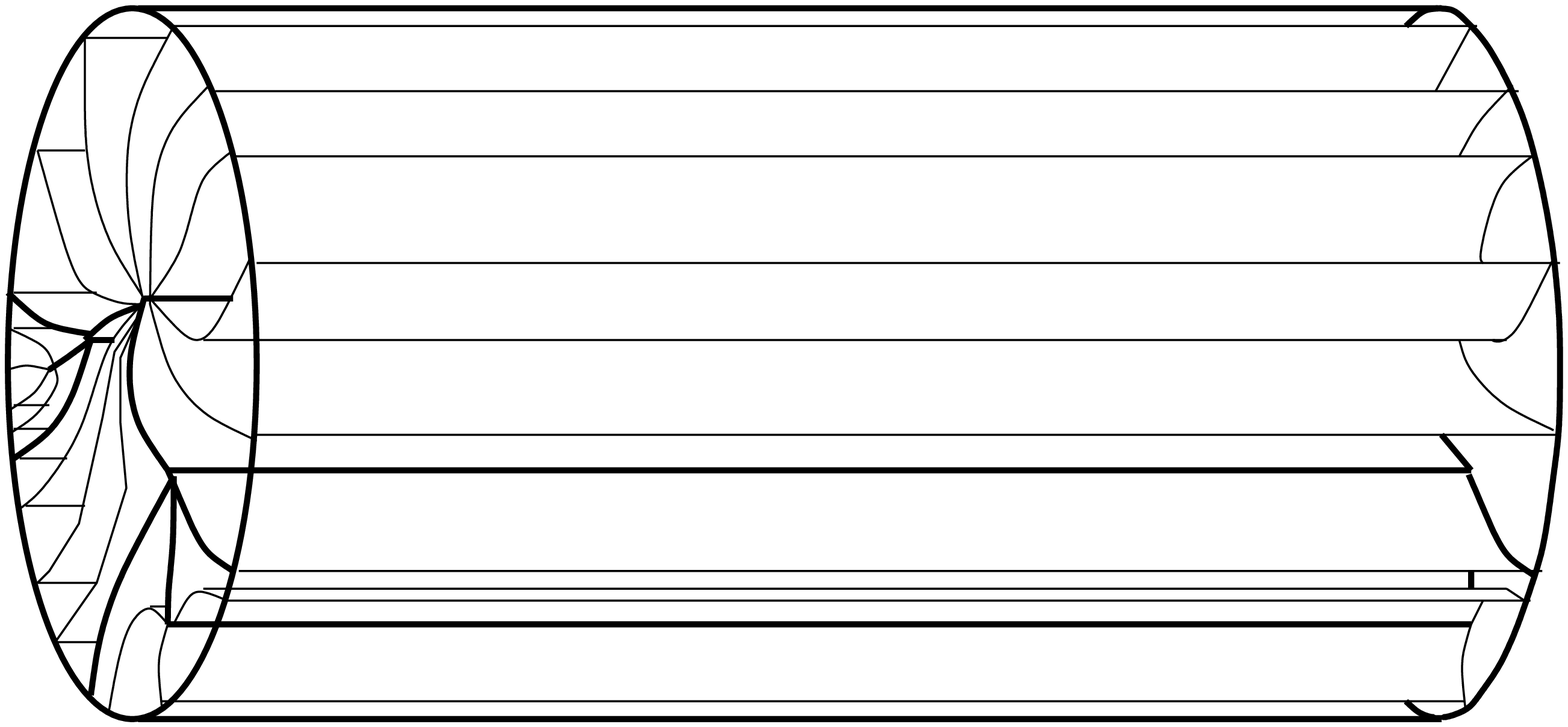}\hspace{3mm} 
\includegraphics[scale=.16]{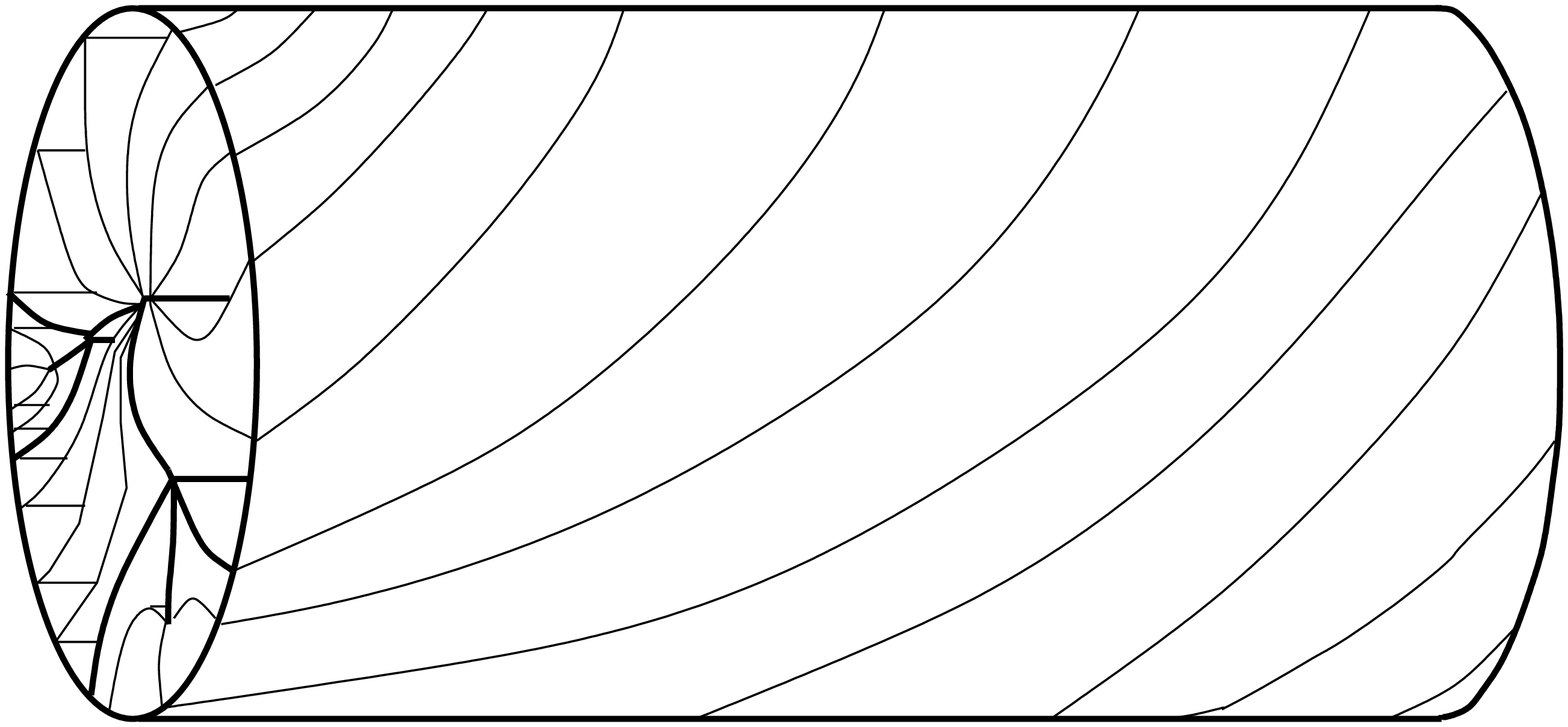} 
\end{center}
\caption{From left to right, the boundary condition is represented by the integers $-1$, $0$, and $1$ 
respectively.  The winding of the leaves is measured relative to the canonical longitude 
$L_{n}\in H_1(\partial\T_{n};\Z)$.}
\label{F:different_boundary_conditions}  
\end{figure}

\begin{remark}\label{R:leaves_intersect_disk_slice_transversally}
{The projected leaves from the finite energy foliations always intersect the disk-slice 
\[
 		D_0=\{0\}\times D\in Z_1, 
\]
transversally in the solid torus.  Hence the nice cross-sectional pictures in the figures.  
We justify this now.} 

{It is a simple matter to prescribe the almost complex structure $J_1$ on a neighborhood of 
$D_0$, without losing the necessary genericity.  This way one can arrange that there exists an 
embedded pseudoholomorphic disk $\v:D\rightarrow\R\times Z_1$ with boundary 
$\v(\partial D)=\{0\}\times\partial D_0\subset\{0\}\times Z_1\subset\R\times Z_1$, such that the 
projection of $\v$ to $Z_1$ is a parameterization of the disk slice $D_0$.  By taking 
$\R$-translates of $\v$ we foliate the whole 
hypersurface $\R\times D_0\subset \R\times Z_1$ by a $1$-parameter 
family of embedded holomorphic disks $\D$. } 

{Let $\tilde{F}\in\Ftilde$ be a leaf in a finite energy foliation.  Let $F=\pr(\tilde{F})$ 
denote the projection down to $Z_1$.  We claim that $F$ is transverse to $D_0$.   Indeed, 
$\tilde{F}$ has topological intersection number $1$ with each leaf in $\D$, 
so by positivity of intersections must also intersect each such leaf transversally in $\R\times Z_1$ 
and at a unique point.  The upshot is that $\tilde{F}$ intersects the hypersurface $\R\times D_0$ 
transversally in $\R\times Z_1$, and so the projection $F=\pr(\tilde{F})\subset Z_1$ intersects 
$D_0$ transversally in $Z_1$.  So $F\cap D_0$ is in each case a connected, compact, non-empty, 
zero or $1$-dimensional embedded submanifold of $D_0$.} 
\end{remark}

The following was proven in \cite{Bramham}. 
\begin{theorem}[Bramham]\label{T:existence_foliations_thesis}
Let $(Z_{1},\lambda_{1})$ be a Reeb-like mapping torus generating a non-degenerate disk map 
$\psi$, which coincides with an irrational rotation on the boundary of $D$.  Let $\beta\in\R$ 
denote the total rotation number of $\psi$ on the boundary, as determined by the mapping torus.  
For each $n\in\N$ let $k_{n},k_{n}+1$ be the two closest integers to $n\beta$.  
Then there exists an almost complex structure $\Jtilde_{1}$ compatible with $\lambda_{1}$, such 
that for each $n\in\N$, there exist two finite energy foliations, we will denote by 
$\Ftilde^{k_{n}}_{n}$ and $\Ftilde^{k_{n}+1}_{n}$, associated to $(Z_{n},\lambda_{n},\Jtilde_{n})$, 
where $\Jtilde_n$ is the lift of $\Jtilde_1$, which have the following 
properties:  
\begin{enumerate}
 \item \textbf{Spanning orbits:} They share a unique spanning orbit.  That is, 
 $|\P(\Ftilde^{k_n}_{n})\cap\P(\Ftilde^{k_n+1}_{n})|=1$.  The shared orbit has odd Conley-Zehnder 
index $2k_{n}+1$.  
 \item \textbf{Boundary conditions:} The boundary condition for the leaves in $\Ftilde^{j}_{n}$, 
 for $j\in\{k_{n},k_{n}+1\}$, is precisely the integer $j$. 
\end{enumerate}
\end{theorem}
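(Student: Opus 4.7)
The plan is to choose a generic compatible almost complex structure $\tilde{J}_1$ on $\R\times Z_1$, lift it to $\tilde{J}_n$ on each $\R\times Z_n$, and construct each $\Ftilde^j_n$ as the closure, modulo $\R$-translation, of a one-parameter family of embedded $\tilde{J}_n$-holomorphic half-cylinders with boundary condition $j$. To seed the construction, I would first normalize $\lambda_1$ in a collar neighborhood of $\partial Z_1$ so that, using the hypothesis that $\psi$ is an irrational rotation on $\partial D$, the Reeb dynamics there is exactly rotational with rotation number $\beta$. The $S^1$-symmetry of this collar model lets me write down, for each integer $k$, an explicit $\R$-family of embedded $\tilde{J}_n$-holomorphic half-cylinders in $\R\times Z_n$ with boundary in $\{c\}\times\partial Z_n$ representing $L_n+k[\partial D]$, parameterized by depth into the collar and by $\R$-translation. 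This furnishes the germ of $\Ftilde^j_n$ near the boundary for $j\in\{k_n,k_n+1\}$.

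Next I would extend this germ inward by studying the maximal connected component $\mathcal{M}^j_n$ of the moduli space of embedded finite energy $\tilde{J}_n$-holomorphic half-cylinders with boundary condition $j$ that contains the seed family. Automatic transversality for embedded curves in four-manifolds shows that $\mathcal{M}^j_n/\R$ is a smooth one-manifold and that the Fredholm index is $1$. For the closure, I would invoke an SFT-type compactness theorem with the boundary condition $\{c\}\times\partial Z_n$, on which $\lambda_n$ restricts to a closed form; any sequence of such curves subconverges to a holomorphic building. Using genericity of $\tilde{J}_1$, non-degeneracy of $\psi$, and the low Fredholm index, standard arguments rule out sphere bubbling, nodal degenerations, and multi-level breaking at interior negative punctures. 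The only unavoidable degeneration is the appearance of a single interior positive puncture at which the curve becomes asymptotic to a simply covered periodic Reeb orbit $\gamma_j\subset Z_n$, which then serves as the spanning orbit for $\Ftilde^j_n$.

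The main obstacle is now to identify the two putative spanning orbits $\gamma_{k_n}$ and $\gamma_{k_n+1}$ and to verify the Conley-Zehnder index claim. Here the decisive input is the Hofer-Wysocki-Zehnder asymptotic analysis of punctured holomorphic curves: the asymptotic winding number of a finite energy half-cylinder at a puncture, measured in the canonical $S^1$-invariant trivialization of the contact distribution, is an integer pinched between the extremal windings of Reeb eigensections of the asymptotic operator, and the boundary condition $j$ forces this winding to equal $j$. Since by construction $k_n<n\beta<k_n+1$, the only admissible asymptotic limit for half-cylinders of either boundary condition is an elliptic orbit whose infinitesimal rotation number lies in $(k_n,k_n+1)$, and such an orbit has Conley-Zehnder index $\mu(\gamma)=2k_n+1$. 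Moreover, at such an orbit the local Hofer-Wysocki-Zehnder model yields precisely one $\R$-family of embedded half-cylinders per admissible integer winding, which forces the two moduli spaces $\mathcal{M}^{k_n}_n$ and $\mathcal{M}^{k_n+1}_n$ to terminate at the \emph{same} simple orbit $\gamma$.

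Finally, to promote the two one-parameter families together with the orbit cylinder over $\gamma$ to honest finite energy foliations of $\R\times Z_n$, I would argue that within each $\mathcal{M}^j_n$ the curves are pairwise disjoint by positivity of intersections applied to embedded curves sharing the same asymptotic and boundary data, that their $\R$-translates sweep out a two-dimensional family in the four-manifold $\R\times Z_n$, and that the swept-out set is both open by transversality and closed by the compactness step, hence all of $\R\times Z_n$. The principal technical difficulties I anticipate are controlling all possible degenerations in the compactness argument to guarantee that the limit is a single-puncture bubble asymptotic to a simple orbit, and pinning down that this orbit is shared by the two families, which rests squarely on the sharp interplay between the integer winding at the puncture, the Conley-Zehnder index of the asymptotic orbit, and the bracketing $k_n<n\beta<k_n+1$.
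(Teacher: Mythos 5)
The paper does not prove this theorem itself; it is imported verbatim from Bramham's thesis \cite{Bramham} (the sentence immediately preceding the statement reads ``The following was proven in \cite{Bramham}''), so there is no internal proof to compare your sketch against. That said, the general strategy you outline --- seeding a moduli space of embedded index-$2$ half-cylinders from an explicit $S^1$-symmetric model in a collar of $\partial Z_n$, extending inward by Fredholm theory and automatic transversality, and closing up via SFT compactness while controlling the asymptotic winding by the boundary homology class --- is indeed the right \emph{spirit}, and it is the approach behind the HWZ finite energy foliation results and their adaptation in \cite{Bramham}.

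However, there are two genuine gaps. First, you have misread the structure of the objects being produced: the theorem asserts only that the \emph{intersection} $\P(\Ftilde^{k_n}_n)\cap\P(\Ftilde^{k_n+1}_n)$ is a singleton, not that each foliation has a single spanning orbit. In general each $\Ftilde^j_n$ has many spanning orbits, of both even and odd Conley-Zehnder index, together with rigid leaves joining them (compare Figures~\ref{F:example_of_a_GSSS_for_a_mappingtorus}, \ref{F:different_boundary_conditions}, \ref{F:nonsimple_foliation}). Your closing paragraph, which tries to promote ``the two one-parameter families together with the orbit cylinder over $\gamma$'' to a foliation of $\R\times Z_n$, therefore cannot be the whole picture: the swept-out set will in general miss a large region of the solid torus. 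Second, and relatedly, you assert that ``standard arguments rule out \ldots multi-level breaking at interior negative punctures,'' but such breaking is exactly what must occur: it is the mechanism that produces the additional spanning orbits and the rigid leaves joining them, and this is precisely how the foliation is completed when the initial family of half-cylinders does not fill. Finally, your claim that both moduli spaces ``terminate at the same simple orbit'' because the local HWZ model near an elliptic orbit with rotation number in $(k_n,k_n+1)$ gives one $\R$-family per admissible winding is not a proof of uniqueness: a priori there could be several non-degenerate odd-index orbits with $\mu=2k_n+1$, and nothing in your argument singles one out or forces the two moduli spaces to converge to the same one. Pinning down the shared spanning orbit is the real content of item~(1) of the theorem, and the argument needs more than the index and winding bookkeeping you describe --- for example, positivity of intersections between leaves of the two foliations, or a filtration by $\lambda$-action, to show the ``first'' odd-index orbit reached from the boundary is the same for both boundary conditions.
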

The following is a basic observation. 
\begin{lemma}\label{L:linking_number_equals_boundary_condition}
Any two distinct spanning orbits $\gamma_{1},\gamma_{2}$ of a finite energy foliation $\Ftilde$ have 
linking number $\lk(\gamma_{1},\gamma_{2})=k$ where $k$ is the integer representing the boundary 
condition of $\Ftilde$.  
\end{lemma}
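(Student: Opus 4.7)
My plan is to compute $\lk(\gamma_1, \gamma_2)$ by exhibiting a $2$-chain in $Z_n \setminus \gamma_2(S^1)$ whose boundary consists of $\gamma_1$ and a loop in $\partial Z_n$ representing $-(L_n + k[\partial D])$; this certifies that $[\gamma_1] = L_n + k[\partial D]$ in $H_1(Z_n \setminus \gamma_2(S^1); \Z)$, which by definition gives $\lk(\gamma_1, \gamma_2) = k$. I build such a chain out of projections of leaves of $\Ftilde$: since leaves are pairwise disjoint, every leaf other than the orbit cylinder $\R \times \gamma_2$ projects into $Z_n \setminus \gamma_2(S^1)$, and so its projection is available as a chain there.

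Two types of leaves supply atomic homological relations. A half cylinder leaf asymptotic to a spanning orbit $\gamma \neq \gamma_2$ projects to a surface whose closure has boundary consisting of $\gamma$ together with a loop $\sigma \subset \partial Z_n$ representing $L_n + k[\partial D]$, by definition of the boundary condition $k$, yielding $[\gamma] = L_n + k[\partial D]$ in $H_1(Z_n \setminus \gamma_2(S^1); \Z)$, and so $\lk(\gamma, \gamma_2) = k$. A non-orbit cylinder leaf with asymptotic limits at two distinct spanning orbits $\gamma_a, \gamma_b$ different from $\gamma_2$ projects to a $2$-chain with boundary $\gamma_a - \gamma_b$, giving $\lk(\gamma_a, \gamma_2) = \lk(\gamma_b, \gamma_2)$. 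It therefore suffices to show that $\gamma_1$ can be joined through a finite chain of such cylinder leaves to some spanning orbit $\gamma$ that occurs as the asymptotic limit of a half cylinder leaf.

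For this I introduce the graph $G$ whose vertices are the spanning orbits of $\Ftilde$ and whose edges are cylinder leaves between distinct spanning orbits. Let $\mathcal C$ be the connected component of $\gamma_1$ and suppose toward contradiction that no orbit of $\mathcal C$ is the asymptotic limit of a half cylinder. Then every leaf of $\Ftilde$ asymptotic to an orbit of $\mathcal C$ is either an orbit cylinder over such an orbit or a cylinder between two distinct such orbits. The projection to $Z_n$ of the union of these leaves, together with the orbits of $\mathcal C$ themselves, is a subset $R \subset Z_n$ that contains $\gamma_1$ but is disjoint from $\partial Z_n$. One wants to argue that $R$ is both open and closed in $Z_n$, contradicting connectedness. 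Closedness follows from the compactness of $\mathcal C$ and of individual projected cylinder leaves, and openness away from the orbits of $\mathcal C$ follows from the continuity of asymptotic limits of nearby leaves once one uses the non-degeneracy of the spanning orbits. The main obstacle is openness of $R$ at an orbit $\gamma \in \mathcal C$ of even Conley-Zehnder index, where only the four local leaves of Figure \ref{F:local_cross-sections} asymptote to $\gamma$ while all other nearby leaves pass by and could in principle asymptote to orbits outside $\mathcal C$; handling this step requires the SFT-style compactness and asymptotic analysis of \cite{H1,BEHWZ} to propagate the asymptotic data of the four separatrices to the neighboring leaves.
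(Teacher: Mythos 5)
The paper offers no proof of this lemma; it is labelled a ``basic observation'', so there is nothing to compare your argument against directly. Your reduction is correct: a half-cylinder leaf asymptotic to a spanning orbit $\gamma$ gives (after projecting and taking the closure) a $2$-chain in $Z_n\setminus\gamma_2$ certifying $\lk(\gamma,\gamma_2)=k$, a cylinder leaf joining $\gamma_a$ to $\gamma_b$ gives $\lk(\gamma_a,\gamma_2)=\lk(\gamma_b,\gamma_2)$, and both chains genuinely avoid $\gamma_2$ because the leaf in question is disjoint from the orbit cylinder $\R\times\gamma_2$. So everything reduces to showing that $\gamma_1$ lies in the same component of your graph $G$ as some orbit carrying a half-cylinder leaf, and you correctly isolate the one point where this needs real input: openness of the set $R$ near an even-index spanning orbit $\gamma\in\mathcal{C}$.

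What you leave as an appeal to ``SFT-style compactness'' is a genuine gap as written, though it is fillable. The way to close it is not merely to invoke compactness but to use the family structure of the foliation: near $\gamma$ the complement of the four separatrices is swept out by four connected one-parameter (Fredholm index~$2$) families of leaves, and within a connected family the asymptotic orbits cannot jump (the spanning orbits are finite in number and the asymptotic limit varies continuously). The limit of such a family as it degenerates onto a separatrix is a two-level building consisting of the separatrix together with the trivial cylinder $\R\times\gamma$, so the non-$\gamma$ asymptotics of every leaf in the family coincide with those of the bounding separatrices. Under your contradiction hypothesis those separatrices are cylinders from $\gamma$ to orbits in $\mathcal{C}$, so the whole quadrant of leaves lies in $R$ and $R$ is open at $\gamma$. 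A similar (easier) remark is needed for openness of $R$ across an index-$1$ rigid cylinder leaf, which you gloss over: such a leaf bounds two index-$2$ families, and the same continuity-of-asymptotics argument keeps both families inside $R$. I would also reframe your closedness claim: since $R$ is a union of leaves of a foliation of $Z_n$, closedness and openness are the same assertion about the complementary union of leaves, so the compactness remark you offer for closedness is not doing the work you suggest. With the family-structure argument inserted, the proof is complete and is, as far as one can tell, the argument the authors have in mind.
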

The next existence result is far more useful, although it already assumes the existence of a 
periodic orbit.  For the following, note that the orbit $\gamma$ having odd Conley-Zehnder index, is 
equivalent to a fixed point of the iterate $\psi^{n}$ that is either elliptic, or is hyperbolic with 
unorientable stable and unstable manifolds.  

\begin{``theorem''}[Bramham]\label{T:existence_of_foliations:_strongest_statement}
Let $(Z_{1},\lambda_{1})$ be a Reeb-like mapping torus generating a non-degenerate disk map 
$\psi$, which coincides with an irrational rotation on the boundary of $D$. 
Then there exists an almost complex structure $\Jtilde_{1}$ compatible with $\lambda_1$, such 
that for each $n\in\N$, the data $(\T_n,\lambda_n,\Jtilde_n)$ admits ``many'' finite energy foliations, 
where $\Jtilde_n$ is the lift of $\Jtilde_1$: 
Let $\gamma$ be any periodic orbit in $(Z_{n},\lambda_{n})$ homologous to the longitude $L_{n}$, and 
having odd parity Conley-Zehnder index.  Then for each $k\in\Z$, there exists a finite energy 
foliation $\Ftilde^{k}(\gamma)$ associated to $(\T_n,\lambda_n,\Jtilde_n)$ with the following two properties.   
\begin{enumerate}
 \item \textbf{Spanning orbits:} $\gamma$ is a spanning orbit.  Equivalently $\R\times\gamma$ is a leaf.    
 \item \textbf{Boundary conditions:} The boundary condition for the leaves in $\Ftilde^{k}(\gamma)$ is 
 the chosen integer $k$.  
 \end{enumerate}
\end{``theorem''}
The important novelty in this statement is that it allows to ``pre-select'' a single spanning orbit.  
Additional subtleties enter the proof when the boundary condition $k\in\Z$ 
lies in the twist interval of the pre-selected periodic orbit $\gamma$, which will appear in 
\cite{Bramham3}.  In this case it seems that the only way around certain difficulties is to use 
a refinement of contact homology developed by Momin \cite{Momin1,Momin2}.  The simpler boundary conditions will be covered in \cite{Bramham2}.  

\begin{figure}[thb]
\begin{center}
\includegraphics[trim=0 0cm 0 2.5cm, clip, scale=.45]{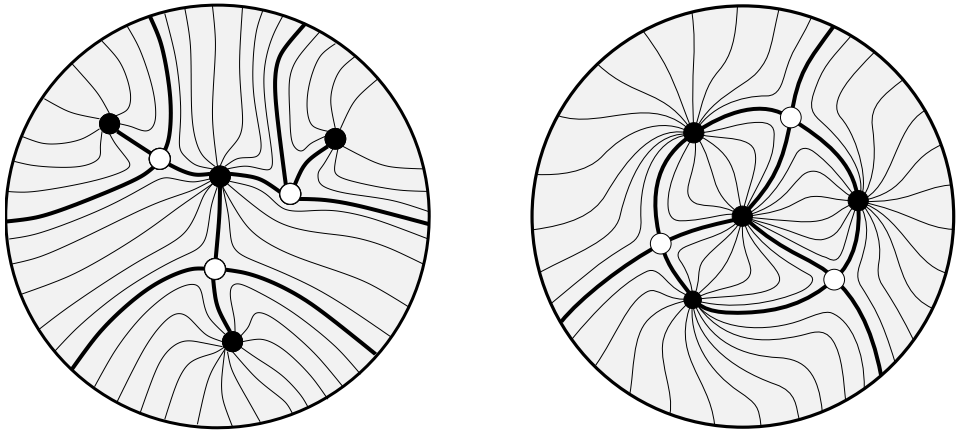}
\caption{On the left a \emph{simple} finite energy foliation.  On the right the center spanning orbit 
is not connected directly to the boundary of the mapping torus by any leaf, so this example is not simple.}
\label{F:nonsimple_foliation}
\end{center}
\end{figure}

Here is a simple way to see that theorem \ref{T:existence_of_foliations:_strongest_statement} really 
does produce many different finite energy foliations, even with the same 
boundary conditions, provided there exist enough periodic orbits.  Suppose that $\gamma_1$ and $\gamma_2$ 
are two distinct periodic orbits in $(Z_n,\lambda_n)$ that are homologous to the longitude $L_n$.  
They have a linking number $\lk(\gamma_1,\gamma_2)\in\Z$.  
Now, by Lemma \ref{L:linking_number_equals_boundary_condition}, 
for any integer $k\in\Z$ not equal to $\lk(\gamma_1,\gamma_2)$, any finite energy foliation 
associated to $(Z_n,\lambda_n)$ that contains $\gamma_1$ as a spanning orbit cannot contain $\gamma_2$, 
and vice versa.  Thus if $\Ftilde^{k}(\gamma_1)$ and $\Ftilde^{k}(\gamma_2)$ are finite energy foliations 
with boundary condition $k\in\Z$ and having $\gamma_1$ and $\gamma_2$ as spanning orbits respectively, 
then $\Ftilde^{k}(\gamma_1)\neq\Ftilde^{k}(\gamma_2)$.

The discussion of integrable disk maps in 
Section \ref{S:integrable_maps} makes it clear in certain situations what kinds of different finite 
energy foliations one can expect to find.  For example, a finite energy foliation for a given 
$(Z_n,\lambda_n)$ is in general not uniquely determined by its boundary condition and a single spanning orbit 
alone.  

For the proofs in the next section it seems useful to distinguish the following feature for finite 
energy foliations of a Reeb-like mapping torus.    
\begin{definition}
Suppose that $\Ftilde$ is a finite energy foliation associated to a Reeb-like mapping torus 
on $Z_{n}$.  We will say that $\Ftilde$ is \emph{simple} if every spanning orbit is connected 
directly to the boundary of $\T_{n}$ by a leaf in $\Ftilde$.  
\end{definition}

\noindent The following are equivalent characterizations of simple:  
\begin{itemize}
 \item The projection down to $Z_n$ of the rigid leaves intersects the disk slice $D_0$ in a 
tree-like graph.  Refer also to remark \ref{R:leaves_intersect_disk_slice_transversally}.  
 \item There are no closed ``cycles'' of leaves.  
 \item There are no Fredholm index-2 whole cylinders.  
\end{itemize}
Perhaps the simplest example of a situation where one finds a non-simple finite energy foliation 
is the following.  

Figure \ref{F:monotonetwist_map} depicts the flow lines of some autonomous smooth Hamiltonian $H:A\rightarrow\R$ on the closed annulus $A=\R/\Z\times[0,1]$.  
The time-$T$ map of the flow, for $T>0$, is an area preserving twist map $\phi$ in the sense of Poincar\'e 
and Birkhoff.  In fact for $T>0$ sufficiently small $\phi$ is a monotone twist map, meaning  
that $\phi=(\phi_1,\phi_2):A\rightarrow A$ where $\frac{\partial\phi_1}{\partial x_2}>0$ 
in coordinates $(x_1,x_2)$. 

\begin{figure}[thb]
\begin{center}
\includegraphics[scale=.25]{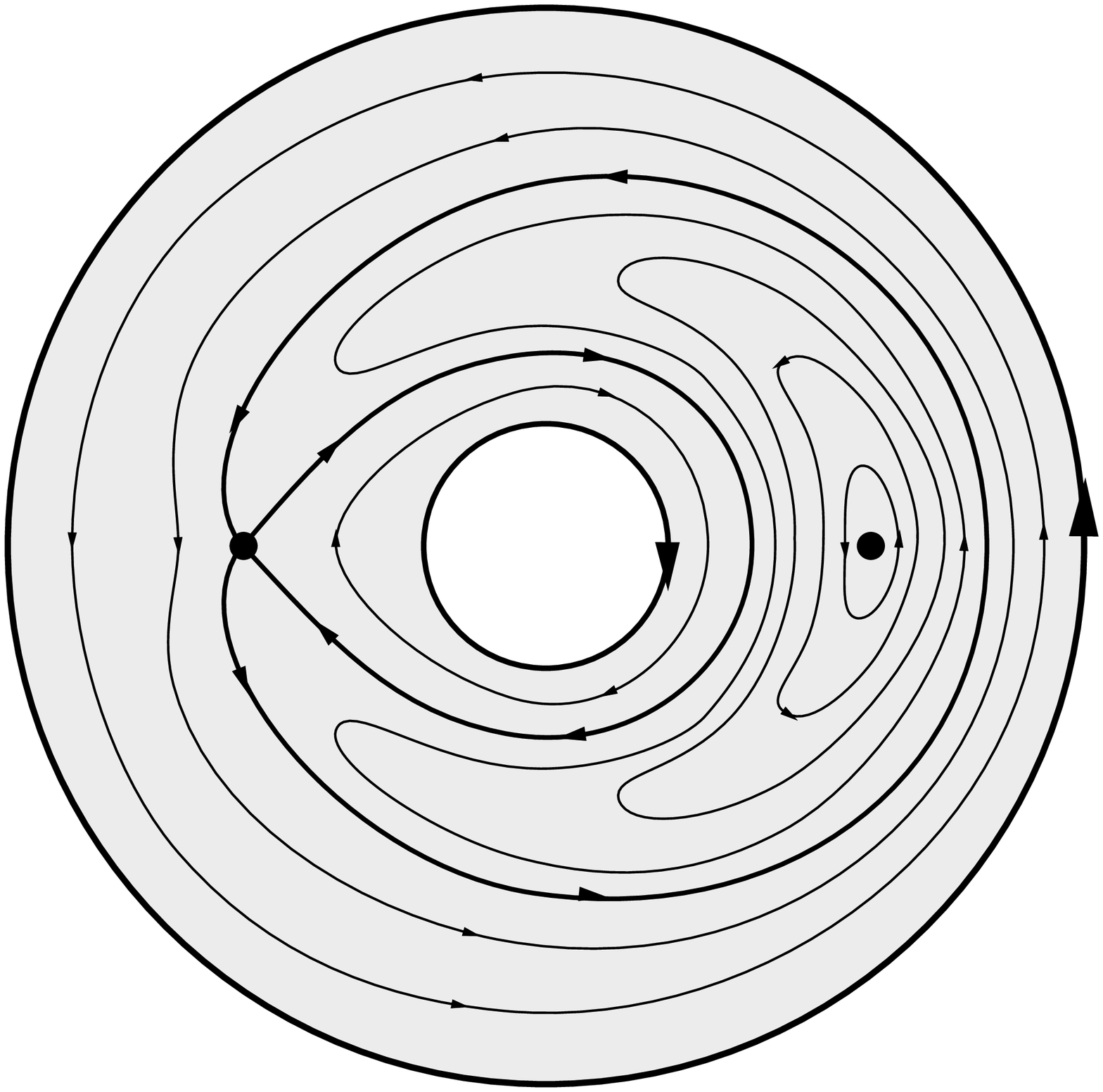}
\caption{}
\label{F:monotonetwist_map}
\end{center}
\end{figure}

Let us now consider the disk map $\psi:D\rightarrow D$ which one obtains by shrinking the inner circle to a 
point, which then corresponds to an elliptic fixed point of $\psi$ at the origin.  Let $(Z_1,\lambda_1)$ 
be a Reeb-like mapping torus generating $\psi$, chosen so that the induced total rotation number on 
the boundary lies in the interval $(0,1)$.  Let us denote by $\gamma_0$ the periodic orbit corresponding 
to the fixed point $0\in D$.  Then a finite energy foliation $\Ftilde$ associated to 
$(Z_1,\lambda_1)$ that has $\gamma_0$ as a spanning orbit, and has boundary condition $0$, would, it 
turns out, have to look as in Figure \ref{F:monotonetwist_map_and_foliation}, which is a non-simple 
foliation. 

The property of $\Ftilde$ having a chain of rigid leaves surrounding the 
spanning orbit $\gamma_0$ and the ``twisting'' property going on about the fixed point $0$ relative to 
the boundary behavior, are not coincidental.  In the following precise sense there is a twist fixed point 
if and only if there is a non-simple finite energy foliation.  

\begin{figure}[thb]
\psfrag{a}{\begin{footnotesize}$-1$\end{footnotesize}}
\psfrag{b}{\begin{footnotesize}$1$\end{footnotesize}}
\psfrag{c}{\begin{footnotesize}$0$\end{footnotesize}}
\psfrag{gamma}{$\gamma_0$}
\begin{center}
\includegraphics[scale=.43]{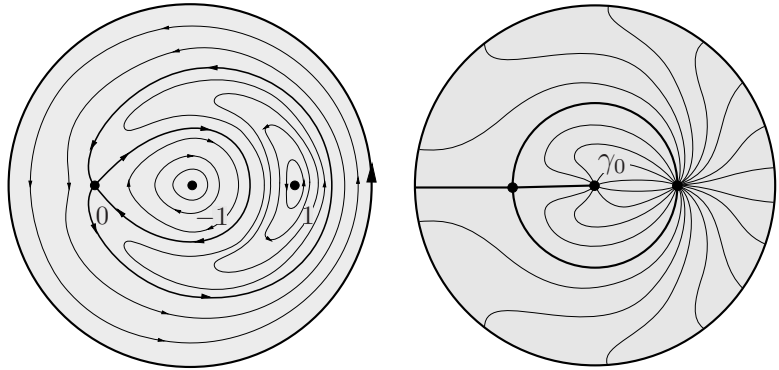}
\caption{In a simple example.  On the left the dynamics, with Conley-Zehnder indices, on the right one 
of the corresponding finite energy foliations.}
\label{F:monotonetwist_map_and_foliation}
\end{center}
\end{figure}

 \begin{lemma}\label{L:nonsimple_foliation_implies_twist_and_converse}
Suppose that $\Ftilde$ is a finite energy foliation associated to a Reeb-like mapping torus 
$(\T_n,\lambda_n)$.  Let $k\in\Z$ be the boundary condition for $\Ftilde$.  
Then:
\begin{enumerate}
 \item If there exists a spanning orbit that is not connected directly to the boundary of $\T_n$ by a 
leaf in $\Ftilde$, then there exists a spanning orbit, not necessarily the same one, having $k\in\Z$ in 
its twist interval.  
 \item Every spanning orbit in $\Ftilde$ that is connected directly to the boundary of $\T_n$ by at least one 
leaf, corresponds to a fixed point of $\psi^n$ that does not have $k\in\Z$ in its twist interval.  In particular, this implies the converse to (1).  
\end{enumerate}
\end{lemma}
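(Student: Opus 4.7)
My plan is to prove (2) first and then derive (1) by an analogous analysis of whole-cylinder leaves between pairs of spanning orbits. Write $\beta:=\Rot_{\psi^n}(\partial D)$ and, for any spanning orbit $\gamma$ with corresponding fixed point $p$, let $\mu(\gamma)=2m+1$ so that $\Rot_{\psi^n}(p)\in(m,m+1)$. The key technical input is a pair of consequences of the Hofer--Wysocki--Zehnder asymptotic analysis and the extremality of asymptotic winding for embedded finite-energy curves at non-degenerate Reeb orbits (cf.~\cite{H1}), formulated in the canonical $S^1$-invariant trivialization of $\xi$.  First, an embedded half-cylinder leaf $F\in\Ftilde$ connecting a spanning orbit $\gamma$ directly to $\partial\T_n$, with boundary in the class $L_n+k[\partial D]$, has asymptotic winding equal to $k$ at $\gamma$; this forces $k\in\{m,m+1\}$, and the puncture type (positive or negative) of $F$ at $\gamma$ determines $\mathrm{sgn}(\Rot_{\psi^n}(p)-k)$.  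Second, an embedded whole-cylinder leaf $C\in\Ftilde$ with asymptotic ends $\gamma_+,\gamma_-$ has $\lk(\gamma_+,\gamma_-)=k$ by Lemma~\ref{L:linking_number_equals_boundary_condition}; the extremality of asymptotic winding for the embedded cylinder, combined with the Fredholm/index identity for rigid leaves, forces the positive puncture end to satisfy $\Rot_{\psi^n}(p_+)>k$ and the negative puncture end to satisfy $\Rot_{\psi^n}(p_-)<k$.

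For (2), let $F\in\Ftilde$ be a half-cylinder leaf connecting $\gamma$ to $\partial\T_n$.  By the first input, $\mathrm{sgn}(\Rot_{\psi^n}(p)-k)$ is determined by the puncture type of $F$ at $\gamma$.  To conclude $k\notin\Twist_{\psi^n}(p)$ it suffices to show that this sign agrees with $\mathrm{sgn}(\beta-k)$.  This sign-matching is an orientation consistency of the foliation at $\partial\T_n$: the Reeb flow on $\partial\T_n$ rotates the meridian at rate $\beta$, while the boundary of the half-cylinder leaf winds the meridian at rate $k$; for $F$ to emerge from $\partial\T_n$ transversely to the flow in the interior and remain embedded, the puncture type of $F$ at $\gamma$ is forced to match $\mathrm{sgn}(\beta-k)$.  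Combining, $\mathrm{sgn}(\Rot_{\psi^n}(p)-k)=\mathrm{sgn}(\beta-k)$, so $k$ is not strictly between $\Rot_{\psi^n}(p)$ and $\beta$, i.e.\ $k\notin\Twist_{\psi^n}(p)$.

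For (1), suppose $\gamma_0$ is a spanning orbit not directly connected to $\partial\T_n$ by any leaf.  By the local model at an odd-index spanning orbit (left panel of Figure~\ref{F:local_cross-sections}), $\gamma_0$ is the asymptotic limit of at least one leaf of $\Ftilde$; since none is a half-cylinder to the boundary, there is a whole-cylinder leaf $C\in\Ftilde$ with asymptotic ends at $\gamma_0$ and some other spanning orbit $\gamma_1$.  By the second input applied to $C$, one of $\gamma_0,\gamma_1$ has infinitesimal rotation number strictly greater than $k$ and the other strictly less, so they lie on opposite sides of $k$.  Since $\beta$ lies on a definite side of $k$, exactly one of $\gamma_0,\gamma_1$ has rotation number on the opposite side of $k$ from $\beta$, so at that spanning orbit $k$ is strictly between its infinitesimal rotation number and $\beta$, i.e.\ $k\in\Twist_{\psi^n}(\cdot)$ there.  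This is the required spanning orbit.

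The main obstacle is justifying the two asymptotic winding statements in the first paragraph.  For the half-cylinder, one must compare the topological boundary datum $L_n+k[\partial D]$ with the asymptotic operator data at $\gamma$, using the null-cobordism supplied by $\pr(F)$ and the naturality of the $S^1$-invariant trivialization under the inclusion $\partial\T_n\hookrightarrow\T_n$; for the whole-cylinder one combines Lemma~\ref{L:linking_number_equals_boundary_condition} with the extremality of winding for embedded cylinders and the Fredholm-index relation constraining rigid leaves.  The secondary subtlety is the sign-matching step in (2), linking the puncture type of $F$ at $\gamma$ to $\mathrm{sgn}(\beta-k)$, which requires tracing the embedding of $F$ in a neighborhood of $\partial\T_n$ together with the orientation of the Reeb flow along the boundary torus.
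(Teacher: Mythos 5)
Your argument for part~(2) is in spirit the same as the paper's, which proves (2) ``by pictures'': both compare the winding of the leaf (equal to $k$ at the spanning orbit and at $\partial\T_n$) against the rotation rate of the flow at those two ends and use transversality of the flow to the leaf to force $\mathrm{sgn}(\Rot_{\psi^n}(p)-k)=\mathrm{sgn}(\beta-k)$. You formalize what the paper leaves as a figure, invoking HWZ asymptotic winding and the $S^1$-invariant trivialization; the sign-matching step near $\partial\T_n$ is asserted at roughly the same level of rigor as the paper's picture. This is a fair match.

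For part~(1), the paper gives no direct proof --- it only notes that (2) yields the \emph{converse} of (1) --- so your whole-cylinder argument is extra content, and it contains a gap. You claim the two ends of the whole-cylinder $C$ satisfy $\Rot_{\psi^n}(p_+)>k$ and $\Rot_{\psi^n}(p_-)<k$ with \emph{strict} inequalities, citing ``the Fredholm/index identity for rigid leaves.'' But a rigid (index-$1$) cylinder has one odd-index and one even-index end; at the even-index end $\mu=2m$ one has $\Rot=m$, and extremality then forces $\Rot=k$ \emph{exactly}, not a strict inequality. Since the twist interval is open, $\Rot=k$ does not put $k$ in the twist interval, and your case split ``one of $\gamma_0,\gamma_1$ lies on the opposite side of $k$ from $\beta$'' fails precisely when $\beta$ and the odd-index end lie on the same side of $k$. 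To repair this you should instead take an index-$2$ whole cylinder asymptotic to $\gamma_0$ (both ends then have odd index and both inequalities are strict), which exists near an odd-index non-connected spanning orbit since the rigid leaves bound open $1$-parameter families; and you must separately handle the case that $\gamma_0$ itself has even index, which your appeal to ``the local model at an odd-index spanning orbit'' silently excludes. A cleaner route is to run a chain-of-cylinders argument from $\gamma_0$ outward until a half-cylinder to $\partial\T_n$ is reached and locate the sign change of $\Rot-k$ along the chain.
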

\begin{proof}(Of part (2) by pictures)
Suppose that there exists a spanning orbit $\gamma$ which has $k$ in its twist interval.  
We will argue there cannot be a leaf in $\Ftilde$ which connects $\gamma$ directly to the boundary 
of $\T$. 

Due to the twist, the infinitesimal rotation number of $\gamma$ lies on the opposite side of the value $k$ 
to the rotation number describing the boundary behavior.  The behavior of the leaves and the flow 
near $\gamma$ and near the boundary are as in the figure.  

\begin{figure}[htbp]
\begin{center}
\includegraphics[scale=.25]{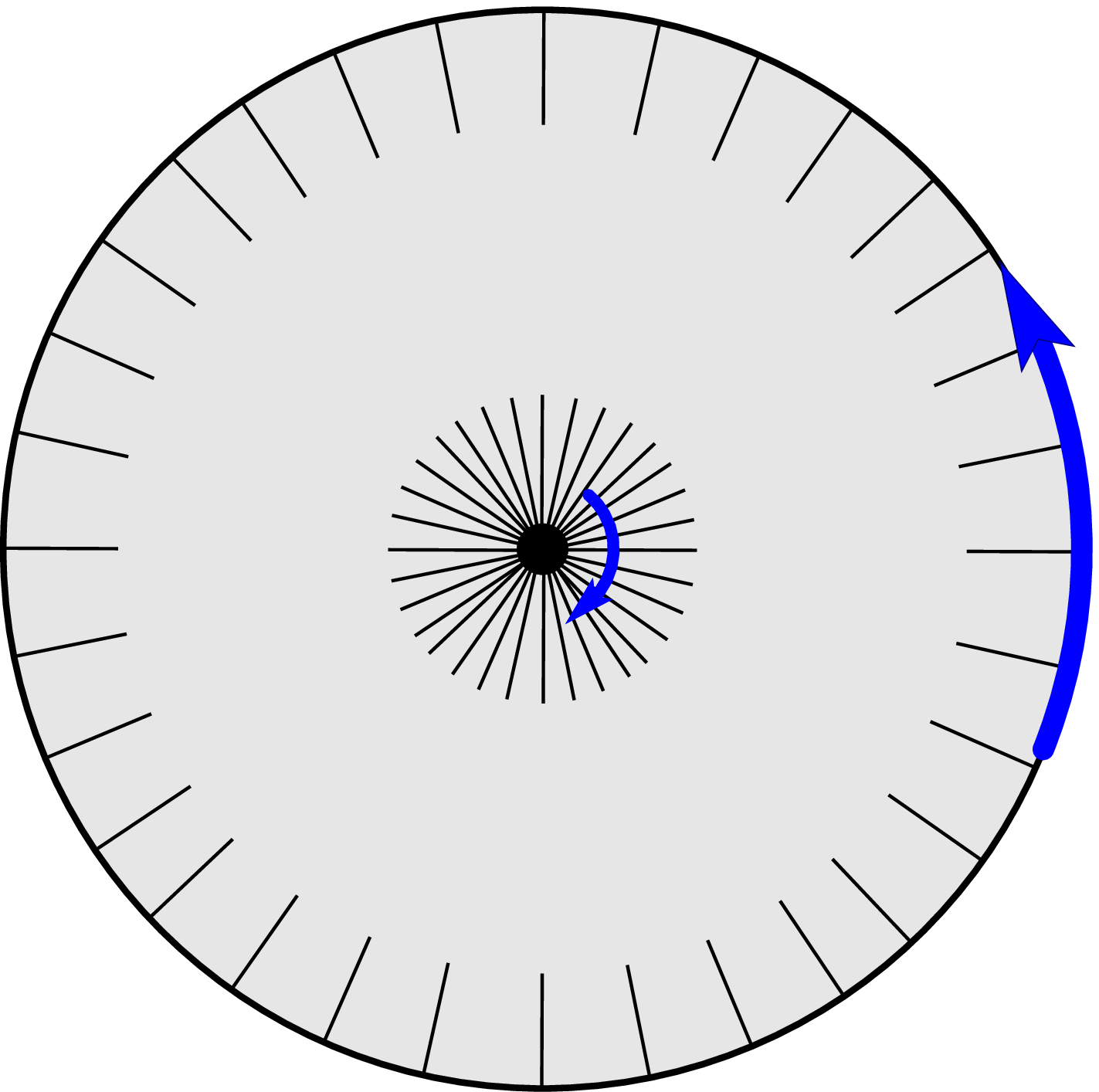}
\end{center}
\end{figure}

There is no way to complete the picture by connecting a leaf from the spanning orbit $\gamma$ to 
the boundary without contradicting the transversality of the flow to the leaf;  
near the spanning orbit the flow winds around slower than the leaves, while the flow winds faster than the leaves 
near the boundary.
\end{proof}

The example in Figure \ref{F:monotonetwist_map_and_foliation} is just a special case of the integrable maps which are discussed in generality in Section \ref{S:integrable_maps}.

\subsection{A Proof of the Poincar\'e-Birkhoff Fixed Point Theorem}
In 1913 Birkhoff in \cite{Birkhoff1} proved the following conjecture of Poincar\'e\footnote{Allegedly the effort cost 
Birkhoff 30 pounds in weight \cite{BG}.} known as the Poincar\'e-Birkhoff fixed point theorem.  See 
also \cite{Birkhoff2}.  Here $A$ denotes the closed annulus $\R/\Z\times[0,1]$ and 
$\Atilde$ the universal covering $\R\times[0,1]$ with respect to the projection map $\pi(x,y)=([x],y)$.  
\begin{theorem}[Birkhoff]\label{T:Poincare-Birkhoff_fixedpoint_thm_for_annuli}
Let $\psi:A\rightarrow A$ be an area preserving, orientation preserving, homeomorphism of the closed annulus 
with the following ``twist'' condition.  There exists a lift $\psitilde=(\psitilde_1,\psitilde_2):\Atilde\rightarrow\Atilde$ such that for all 
$x\in\R$, 
\[
 		\psitilde_1(x,0)>x \qquad\mbox{and}\qquad \psitilde_1(x,1)<x.  
\]
Then $\psi$ has at least two fixed points which also lift to fixed points of $\psitilde$.  
\end{theorem}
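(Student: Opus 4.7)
The plan is to translate the annulus theorem into the disk-map framework developed above, and extract two fixed points from a carefully chosen finite energy foliation. By standard smoothing and small perturbation, first reduce to the case that $\psi$ is a smooth, non-degenerate, area-preserving diffeomorphism of $A$ satisfying the strict twist inequality for the given lift $\tilde{\psi}$. The general homeomorphism case then follows by passing to the limit of such approximations, using that the twist inequality confines any $\tilde{\psi}$-fixed point to a compact set in the interior of $A$, so the approximating fixed points cannot escape.

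Next, extend $\psi$ to a smooth area-preserving map $\bar{\psi}:D\to D$ of a larger disk $D\supset A$. Glue a small disk $D^{-}$ to the inner boundary $\R/\Z\times\{0\}$ and extend by a rigid rotation through a small positive irrational angle $\alpha$, so that the center $p_{0}$ of $D^{-}$ becomes an elliptic fixed point of $\bar{\psi}$. Glue a thin collar $A^{+}$ to the outer boundary, interpolating smoothly to a rigid rotation through a small negative irrational angle $\beta$ on $\partial D$. By Lemma \ref{L:existence_of_contact_form}, realize $\bar{\psi}$ as the first return map of a Reeb-like mapping torus $(Z_{1},\lambda_{1})$. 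The parameters $\alpha,\beta$ and the extension are to be chosen so that the canonical lift $\bar{\tilde{\psi}}$ induced by this mapping torus restricts on $A$ to the given $\tilde{\psi}$; equivalently, a fixed point of $\bar{\psi}$ lying in $A$ is a $\tilde{\psi}$-fixed point of $\psi$ exactly when its integer rotation offset with respect to the mapping torus vanishes. This faithful matching of lifts is the main technical hurdle of the argument.

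Now $p_{0}$ corresponds to a Reeb orbit $\gamma_{0}$ of $(Z_{1},\lambda_{1})$ of odd Conley--Zehnder index, because its linearized rotation is an irrational rigid rotation. By construction $\Rot_{\bar{\psi}}(p_{0})>0>\Rot_{\bar{\psi}}(\partial D)$, so $0\in\Twist_{\bar{\psi}}(p_{0})$. Apply ``Theorem'' \ref{T:existence_of_foliations:_strongest_statement} with $\gamma=\gamma_{0}$ and $k=0$ to obtain a finite energy foliation $\Ftilde^{0}(\gamma_{0})$ of $(Z_{1},\lambda_{1})$ having $\gamma_{0}$ as a spanning orbit and boundary condition $0$. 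By part (2) of Lemma \ref{L:nonsimple_foliation_implies_twist_and_converse}, no leaf of $\Ftilde^{0}(\gamma_{0})$ joins $\gamma_{0}$ directly to $\partial Z_{1}$; since half-cylinder leaves realizing the boundary condition $0$ must reach $\partial Z_{1}$, $\Ftilde^{0}(\gamma_{0})$ contains at least one further spanning orbit $\gamma_{1}\neq\gamma_{0}$. Any such additional spanning orbit projects to a fixed point of $\bar{\psi}$ with integer rotation offset $0$, and such a fixed point must lie in $A$ (the two capped regions are rigid rotations with no fixed points besides $p_{0}$, and $\partial D$ is an irrational rotation), hence is a $\tilde{\psi}$-fixed point of $\psi$.

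To produce a \emph{second} such fixed point, use a Lefschetz index count. Since $\bar{\psi}$ is isotopic to the identity on $D$ via the Reeb-flow isotopy, $\sum_{p\in\Fix(\bar{\psi})}\operatorname{Ind}_{p}(\bar{\psi})=\chi(D)=1$; all indices are $\pm 1$ by non-degeneracy, and $p_{0}$ contributes $+1$, so the additional fixed points split into equal numbers with indices $+1$ and $-1$. Their total number is therefore even, and since at least one exists by the foliation argument, at least two exist. Passing to the limit through the approximations gives two distinct $\tilde{\psi}$-fixed points of the original homeomorphism $\psi$. The hard part is the lift-matching in the second step; once that is arranged, the Theorem from the previous subsection does the essential work, and the remainder is elementary fixed-point theory.
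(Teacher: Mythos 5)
Your extension of the annulus map to a disk map $\bar{\psi}$ with an elliptic center $p_{0}$ is a reasonable way to pass from the annulus statement to the disk reformulation (Theorem \ref{T:Poincare-Birkhoff_fixedpoint_thm_for_disks}) that the paper actually proves, and your use of ``Theorem'' \ref{T:existence_of_foliations:_strongest_statement} with boundary condition $k=0$ together with Lemma \ref{L:nonsimple_foliation_implies_twist_and_converse}(2) to extract \emph{one} further spanning orbit tracks the paper's Step~1 closely. However, there are two genuine gaps, and they are exactly the places where the paper's proof does its real work.

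First, the Lefschetz index count does not give the second fixed point in the form you need. You compute $\sum_{q}\operatorname{Ind}_{q}(\bar\psi)=\chi(D)=1$, observe $p_{0}$ contributes $+1$, and conclude the remaining fixed points are even in number, hence at least two once the foliation produces one. But this is a count of \emph{all} interior fixed points of $\bar\psi$, regardless of their integer rotation offset with respect to the mapping torus. The theorem requires two fixed points that lift to fixed points of $\tilde\psi$, i.e.\ fixed points with offset $0$. The foliation argument produces one such point; the Lefschetz balance may be closed by a fixed point with a completely different offset (a $\psi$-fixed point that is not a $\tilde\psi$-fixed point), which is no help. One would need a signed count restricted to offset-$0$ fixed points, which the naive Lefschetz formula on $D$ does not give. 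The paper avoids this entirely: the two spanning orbits come directly out of the foliation, from the observation that the closed cycle of rigid leaves separating $\gamma_{p}$ from $\partial Z_1$ must contain at least two spanning orbits, since otherwise the Reeb flow would cross the cycle in a single normal direction and the projected flow would violate area preservation of $\psi$. Crucially, every spanning orbit in the cycle automatically has the required linking number $k$ with $\gamma_{p}$ by Lemma \ref{L:linking_number_equals_boundary_condition}, so the rotation offset constraint is built in; your Lefschetz substitute loses exactly this constraint.

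Second, ``passing to the limit through the approximations'' is where the paper spends its entire Step~2, and that step cannot be waved through. When you take a sequence of non-degenerate perturbations, the two spanning orbits you found could collapse to a single periodic orbit of the unperturbed $\lambda_1$; nothing in the non-degenerate argument prevents this a priori. The paper's resolution is to follow the whole cycle of rigid leaves, take the Hausdorff limit $S_\infty\subset D$ of the cross-sections $S_j$, and use rigidity of the holomorphic curves (not just positivity of intersections) to show that every point of $S_\infty$ is a fixed point with linking number $k$, that $S_\infty$ is connected and compact, that it avoids $p$ and $\partial D$ because of the twist condition, and therefore that it separates $p$ from $\partial D$ and is infinite. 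In short, the non-degenerate case and the limit are where the paper's argument differs from yours in an essential way, and your proposal is missing both the correct mechanism for the second fixed point and the compactness analysis that survives the degenerate limit.
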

In our framework we consider a smooth disk map $\psi:D\rightarrow D$ represented as the first return 
map of a Reeb-like mapping torus $(\T_1,\lambda_1)$.  We observed in Section 
\ref{S:real_valued_rotation_numbers} that the choice of a mapping torus allows us to assign canonical 
twist intervals to each fixed point of $\psi$, and an integer to pairs of distinct fixed points, called 
the linking number.  Recall that the twist interval of $\psi$ at a fixed point $p$ was defined as the 
open interval of real numbers bounded by the (real valued) rotation number of $\psi$ on the boundary 
of $D$ and the (real valued) infinitesimal rotation number of $\psi$ at $p$.  

We will prove the following reformulation of Theorem \ref{T:Poincare-Birkhoff_fixedpoint_thm_for_annuli}.  

\begin{theorem}\label{T:Poincare-Birkhoff_fixedpoint_thm_for_disks}
Suppose that $\psi:D\rightarrow D$ is a $C^\infty$-smooth orientation preserving, area preserving, diffeomorphism.  Let 
$(\T_1,\lambda_1)$ be a Reeb-like mapping torus generating $\psi$.  If $\psi$ has an interior 
fixed point $p\in D$, for which there exists an integer $k\in\Z$ such that 
\[
			k\in\Twist_\psi(p), 
\]
then $\psi$ has at least two fixed points $x_1,x_2$, distinct from $p$, such that the corresponding periodic 
orbits $\gamma_1,\gamma_2:S^1\rightarrow Z_1$ have linking numbers 
$\lk(\gamma_1,\gamma_p)=\lk(\gamma_2,\gamma_p)=k$.  (In fact one also finds that  
$\lk(\gamma_1,\gamma_2)=k$.)  
\end{theorem}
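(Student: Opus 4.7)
The plan is to apply the existence theorem \ref{T:existence_of_foliations:_strongest_statement} to the Reeb orbit $\gamma_p$ of $(Z_1,\lambda_1)$ corresponding to $p$, then exploit the twist hypothesis to force the resulting foliation to be non-simple, and finally read off the two new fixed points from the additional spanning orbits in the rigid necklace that encircles $\gamma_p$.

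\textbf{Reduction.} First I would reduce to the case that $\psi$ is non-degenerate and that $\gamma_p$ has odd Conley--Zehnder index. The condition $k\in\Twist_\psi(p)$ is open in $\psi$ (the rotation numbers vary continuously with the map), so a generic $C^\infty$-small area-preserving perturbation, fixing $p$ and leaving $\psi|_{\partial D}$ an irrational rotation, preserves the hypothesis. When $p$ is elliptic or non-orientable hyperbolic the orbit $\gamma_p$ is already of odd index and Theorem \ref{T:existence_of_foliations:_strongest_statement} applies; the orientable hyperbolic case (even index of $\gamma_p$) is not handled directly by that theorem and must be treated separately, e.g.\ by a limiting argument from nearby non-degenerate perturbations whose fixed points converge to $p$ together with the sought additional ones.

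\textbf{Construction and non-simplicity.} With the reduction in place, apply Theorem \ref{T:existence_of_foliations:_strongest_statement} with spanning orbit $\gamma_p$ and the integer $k$ from the hypothesis. This produces a finite energy foliation $\Ftilde:=\Ftilde^{k}(\gamma_p)$ on $(Z_1,\lambda_1,\Jtilde_1)$ having $\gamma_p$ as a spanning orbit and boundary condition $k$. Since $k\in\Twist_\psi(p)$, the contrapositive of Lemma \ref{L:nonsimple_foliation_implies_twist_and_converse}(2) ensures that no leaf of $\Ftilde$ connects $\gamma_p$ directly to $\partial Z_1$. Thus $\Ftilde$ is non-simple; projecting to the disk slice $D_0$ as in Remark \ref{R:leaves_intersect_disk_slice_transversally}, the rigid leaves form a transverse graph that separates the node $p\in D_0$ from the boundary circle $\partial D_0$.

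\textbf{Extracting two extra fixed points.} At the odd-index node $p$ the local cross-section is as on the left of Figure \ref{F:local_cross-sections}: all leaves spiral into $p$ and no rigid edge passes transversely through it, so the closed rigid necklace separating $p$ from $\partial D_0$ cannot use $p$ as a junction and must consist of spanning orbits other than $\gamma_p$. Even-index (hyperbolic) orbits contribute the four-pronged saddle junctions on the right of Figure \ref{F:local_cross-sections} needed to route the necklace. A combinatorial analysis of this necklace, using the valence count at each node and the prescribed boundary condition $k$ (which dictates how the rigid leaves in the outer annular region wind relative to the canonical longitude $L_1$), should force at least two further spanning orbits $\gamma_1,\gamma_2$. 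These correspond to fixed points $x_1,x_2$ of $\psi$, distinct from $p$, and Lemma \ref{L:linking_number_equals_boundary_condition} immediately gives $\lk(\gamma_i,\gamma_p)=\lk(\gamma_1,\gamma_2)=k$.

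\textbf{Main obstacle.} The delicate step is the combinatorial extraction of genuinely \emph{two} additional orbits rather than one. One must rule out a configuration in which a single hyperbolic spanning orbit's four rigid edges pair into a self-linking loop around $p$, giving only one extra fixed point. The key additional input should be that the boundary condition $k$ together with Lemma \ref{L:linking_number_equals_boundary_condition} forces both the enclosing necklace's winding and the linking number of any single additional orbit with $\gamma_p$ to equal $k$, which via the parity count of nodes traversed by a closed cycle in the rigid graph forbids the self-loop scenario. A secondary obstacle is the orientable hyperbolic case of $p$ mentioned in the reduction, where the existence theorem does not apply at $\gamma_p$ directly and one must combine a perturbation with the compactness-of-Reeb-periodic-orbits argument familiar from finite energy foliation theory to recover the fixed points of the original $\psi$.
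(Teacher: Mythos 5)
Your overall strategy matches the paper's: reduce to $p$ elliptic, apply ``Theorem'' \ref{T:existence_of_foliations:_strongest_statement} with spanning orbit $\gamma_p$ and boundary condition $k$, use Lemma \ref{L:nonsimple_foliation_implies_twist_and_converse}(2) to force non-simplicity of the resulting $\Ftilde^k(\gamma_p)$, and extract the desired orbits from the rigid necklace enclosing $\gamma_p$. However, the two steps you flag as ``the delicate point'' and the ``secondary obstacle'' are exactly where your argument is incomplete, and the paper's resolutions of both are rather different from what you sketch.

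\textbf{The ``two orbits, not one'' step.} Your combinatorial/valence/parity discussion is vague, and I don't think the boundary condition $k$ or the linking number formula is what rules out a self-loop around a single extra hyperbolic orbit. The paper's argument is purely dynamical and much cleaner: if the enclosing necklace had only one spanning orbit, the Reeb flow would cross the rigid boundary of the enclosed region in a single normal direction only; this would make the enclosed region strictly attracting or strictly repelling for the first return map, contradicting area preservation of $\psi$. The alternation of the transverse direction at each hyperbolic node (Figure \ref{F:local_cross-sections}) is what forces the number of nodes to be at least two, and the area-preserving hypothesis is the ingredient that makes this alternation necessary rather than optional. Your proposal never invokes area preservation at this point, so it has a genuine gap.

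\textbf{The degenerate case.} Your reduction proposes a global $C^\infty$-small perturbation to non-degeneracy, but then the two fixed points $x_1,x_2$ you find are fixed points of the perturbed map, not of the original $\psi$. You only mention a ``limiting argument from nearby non-degenerate perturbations'' parenthetically, and only for the orientable-hyperbolic-$p$ subcase, without addressing the real difficulty: as the perturbation shrinks, the two extra spanning orbits of $\Ftilde_j$ may collapse onto a \emph{single} periodic orbit $\sigma$, which would produce only one new fixed point. The paper's Step 2 is devoted to ruling this out. It takes the Hausdorff limit $S_\infty$ of the necklace cross-sections $S_j\subset D_0$, shows $S_\infty$ is compact, connected, consists entirely of fixed points of $\psi$ with linking number $k$ to $\gamma_p$, and uses the twist condition to show $S_\infty$ is disjoint from $p$ and from $\partial D$. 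Since $S_\infty$ is connected and separates $p$ from $\partial D$, it must be infinite, yielding (more than) two fixed points. This uses rigidity of the holomorphic curves through the limit, not merely positivity of intersections, and it is the genuinely new part of the proof; your proposal does not supply a substitute.

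So: the skeleton is right, the non-simplicity reduction via Lemma \ref{L:nonsimple_foliation_implies_twist_and_converse} and the linking-number conclusion via Lemma \ref{L:linking_number_equals_boundary_condition} are exactly as in the paper, but both of the steps you yourself identified as obstacles need the specific arguments above (area preservation for ``two, not one'', and the Hausdorff-limit connectedness argument for the degenerate case), neither of which your sketch contains.
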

The Poincar\'e-Birkhoff theorem, also in more general formulations than the statement in Theorem 
\ref{T:Poincare-Birkhoff_fixedpoint_thm_for_annuli}, has a long and beautiful history.  We mention just 
a few references \cite{Carter,ConleyZehnder,Ding,Franks3,Franks4,Guillou,K,LeCalvezWang,Neumann,Poincare}. 
It is perhaps not surprising that a variational approach in the spirit of Floer or Conley and Zehnder 
should produce a proof in the smooth category.  Indeed we recall that this 
statement led Arnol'd to make his famous conjecture.  

Nevertheless, we present now a proof by the first author of Theorem \ref{T:Poincare-Birkhoff_fixedpoint_thm_for_disks} using 
finite energy foliations, which is vaguely reminiscient of the 
simple argument that applies only to monotone twist maps.  The existence of a \emph{second} fixed 
point arises in a surprising way, and not by using indices of fixed points.  

\begin{proof}(Of Theorem \ref{T:Poincare-Birkhoff_fixedpoint_thm_for_disks}) 
We wish to apply theorem \ref{T:existence_of_foliations:_strongest_statement} which gives 
us the existence of certain finite energy foliations.  Currently to use this result requires that the behavior of the 
disk map be a rigid (irrational) rotation on the boundary circle.  So we first make an elementary 
argument to reduce the general case to this one.  The reader who wishes to skip this should jump 
to step 1.    

{We are given a $C^\infty$-smooth orientation preserving, area preserving, diffeomorphism $\psi:D\rightarrow D$ with an interior fixed point $p$, and an integer $k$ 
lying in the twist interval of $p$.  Observe that on any $\varepsilon$-neighborhood of the boundary of $D$ 
we can modify $\psi$ to obtain a new map $\psi'$ with the following properties.  $\psi'$ agrees with 
$\psi$ outside of the boundary strip, is a rigid rotation on the boundary of $D$, and is still a $C^\infty$-smooth orientation preserving, area preserving, diffeomorphism.  For example by composing $\psi$ with a suitable Hamiltonian diffeomorphism 
for a Hamiltonian that is constant outside of the $\varepsilon$-neighborhood of $\partial D$.  Moreover, 
one can do this in such a way that on the open $\varepsilon$-neighborhood $\psi'$ sends points around in one direction further, in an angular sense, than $\psi$ does, and ``speeding up'' as you approach the boundary 
circle.  Pick this direction so as to enlarge the twist interval.  If one does this for 
$\varepsilon>0$ sufficiently small, one can arrange 
that $\psi'$ has the property that any fixed points within the 
$\varepsilon$-neighborhood of the boundary have linking number with $p$ different from $k$.  To say all this 
rigorously one should of course work with a lift, on the complement of $p$, to the universal covering, but 
the idea is simple.  Thus, it suffices to prove the assertion for this modified map $\psi'$, because 
the two fixed points we find will automatically 
be fixed points of the unmodified map $\psi$.}  

{Similarly, to apply the existence theorem 
\ref{T:existence_of_foliations:_strongest_statement} it will be convenient to assume that 
the fixed point $p$ is elliptic.  A similar argument to the one just described allows to modify the 
disk map $\psi$ on an $\varepsilon$-punctured-neighborhood of the fixed point $p$, this time so as 
to send points around $p$ faster in the opposite direction, in an angular sense, increasing the twist 
interval still further.  This way we can arrange that for the new map the eigenvalues of the 
linearization $D\psi'(p)$ lie on the unit circle, so that $p$ is elliptic.  The upshot is 
that without loss of generality we can assume that our disk map is a rigid rotation on the boundary of 
the disk (with any rotation number, in particular we may take it to be irrational), and that the fixed 
point $p$ is elliptic.  For the rest of the argument we will make these assumptions on $\psi$.}  

Pick a Reeb-like mapping torus $(Z_1,\lambda_1)$ generating 
the disk map $\psi$.  Let $\gamma_p:S^1\rightarrow Z_1$ be the simply covered periodic orbit corresponding 
to the fixed point $p$.  Since $p$ is elliptic, $\gamma_p$ has odd parity Conley-Zehnder index.  

\textbf{Step 1}  The non-degenerate case:  If the disk map, equivalently the Reeb flow, is non-degenerate, 
then the existence theorem  \ref{T:existence_of_foliations:_strongest_statement} applies immediately 
and provides a finite energy foliation $\Ftilde$ associated to $(\T_1,\lambda_1)$ which has the 
odd index orbit $\gamma_p$ for a spanning orbit, and has boundary condition the integer $k$.  

By part (2) of lemma \ref{L:nonsimple_foliation_implies_twist_and_converse} $\gamma_p$ cannot 
be connected to the boundary of $\T_1$ by a leaf in $\Ftilde$ because $k\in\Twist_\psi(p)$.  
Thus $\gamma_p$ is ``enclosed'' by a chain of rigid leaves, as for example 
in figure \ref{F:foliation_for_a_twistmap}.  
\begin{figure}[htb]
\psfrag{gamma}{$\gamma_p$}
\psfrag{C1}{$C_1$}
\psfrag{C2}{$C_2$}
\begin{center}
\includegraphics[scale=.28]{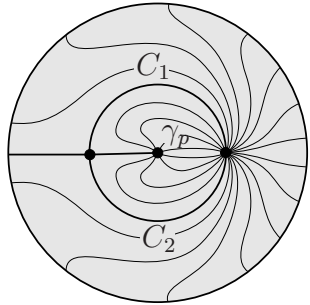}
\caption{A chain of rigid leaves $C_1$ and $C_2$ enclosing $\gamma_p$.}
\label{F:foliation_for_a_twistmap}
\end{center}
\end{figure}

Note that the chain of rigid leaves has to include at least two spanning orbits, else the enclosed 
region has the property that points move in only one direction normal to its boundary, which would 
contradict the area preserving property of $\psi$.  

Any two spanning orbits besides $\gamma_p$ have linking number $k$ with $\gamma_p$ because this is the boundary 
condition for the foliation $\Ftilde$.  Thus the theorem is proven provided $\psi$ is non-degenerate.  

\textbf{Step 2} It is not immediately obvious how to complete the proof to the degenerate case while retaining 
\emph{two} new fixed points.  This part really uses a rigidity of the holomorphic curves, and not 
merely positivity of intersections. 

Suppose $\psi$ has possibly degenerate fixed points.  Carrying out the above argument 
for a sequence of non-degenerate perturbations of $\psi$, keeping $p$ fixed, we obtain a sequence of 
finite energy foliations $\{\Ftilde_j\}_j\in\N$ having $\gamma_p$ as a spanning orbit and boundary condition $k$.  
By the argument in step 1 each $\Ftilde_j$ has at least two spanning orbits besides $\gamma_p$.  These have 
uniformly bounded period, so as $j$ goes to 
infinity they must converge to periodic orbits for the unperturbed mapping torus $(\T_1,\lambda_1)$ that 
correspond to 
fixed points for $\psi$.  The only concern is that all the spanning orbits might collapse onto a single 
limiting orbit, when the theorem requires two.  

We argue as follows.  Suppose that indeed all spanning orbits from the sequence $\Ftilde_j$, besides $\gamma_p$, 
converge to a single periodic orbit $\sigma$ in the limit as $j\rightarrow\infty$.  {Note first that the twist condition prevents $\sigma$ from coinciding with  $\gamma_p$.  Indeed, the sequence of binding orbits which converge to 
$\sigma$ have linking number $k$ with $\gamma_p$, and this prevents them entering a small neighborhood 
of $\gamma_p$ on which all points rotate either much faster or must slower than $k$ depending on the 
direction of the infinitesimal twisting at $p$.  So $\sigma\neq\gamma_p$.}

\begin{figure}[htbp]
\psfrag{P}{\begin{scriptsize}\end{scriptsize}}
\psfrag{Q}{\begin{scriptsize}\end{scriptsize}}
\psfrag{C1}{\begin{scriptsize}\end{scriptsize}}
\psfrag{C2}{\begin{scriptsize}$S_j$\end{scriptsize}}
\psfrag{Pinf}{\begin{scriptsize}$\sigma$\end{scriptsize}}
\psfrag{gamma}{\begin{footnotesize}$\gamma_p$\end{footnotesize}}
\psfrag{S}{\begin{scriptsize}\end{scriptsize}}
\begin{center}
\includegraphics[scale=.25]{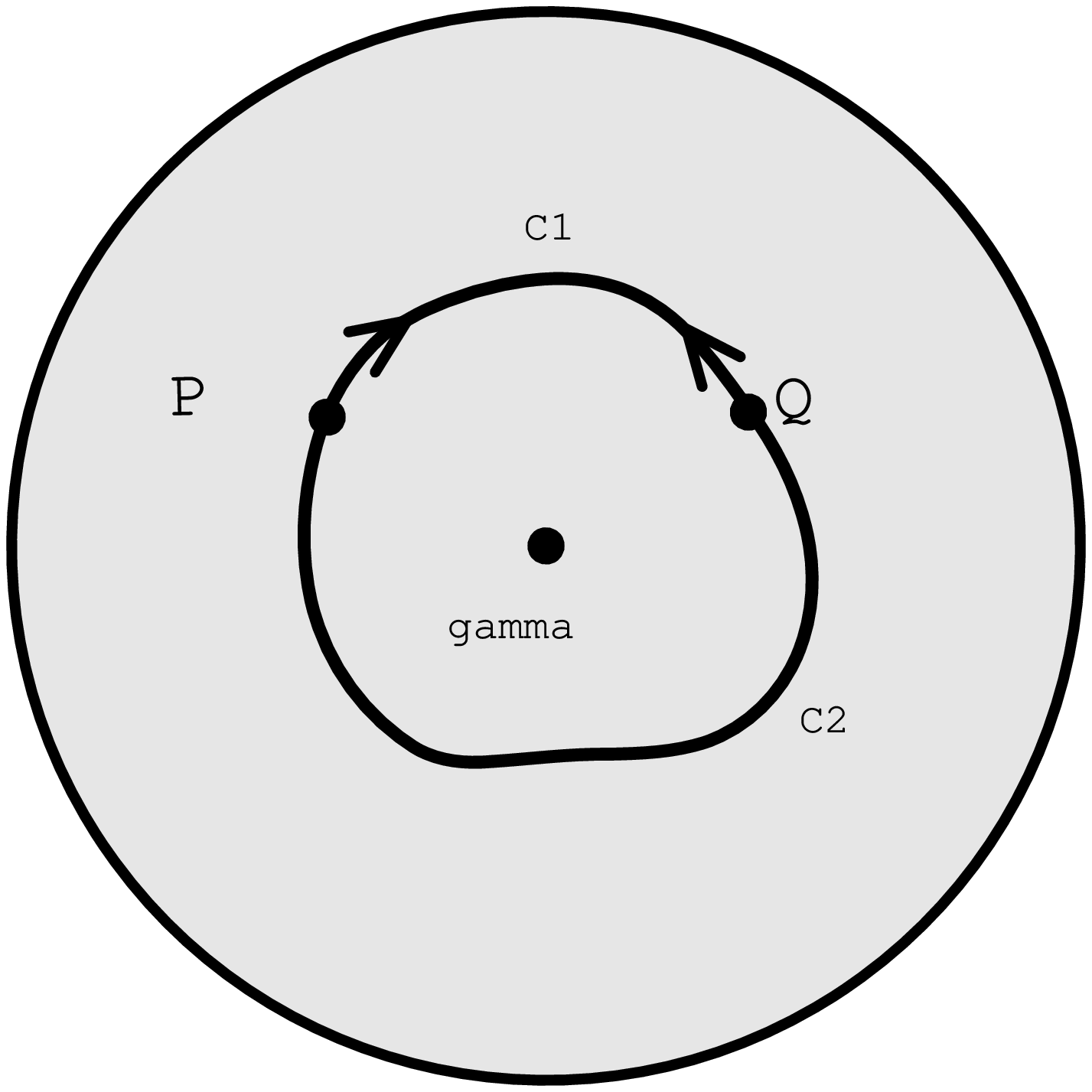}\hspace{12mm}
\includegraphics[scale=.25]{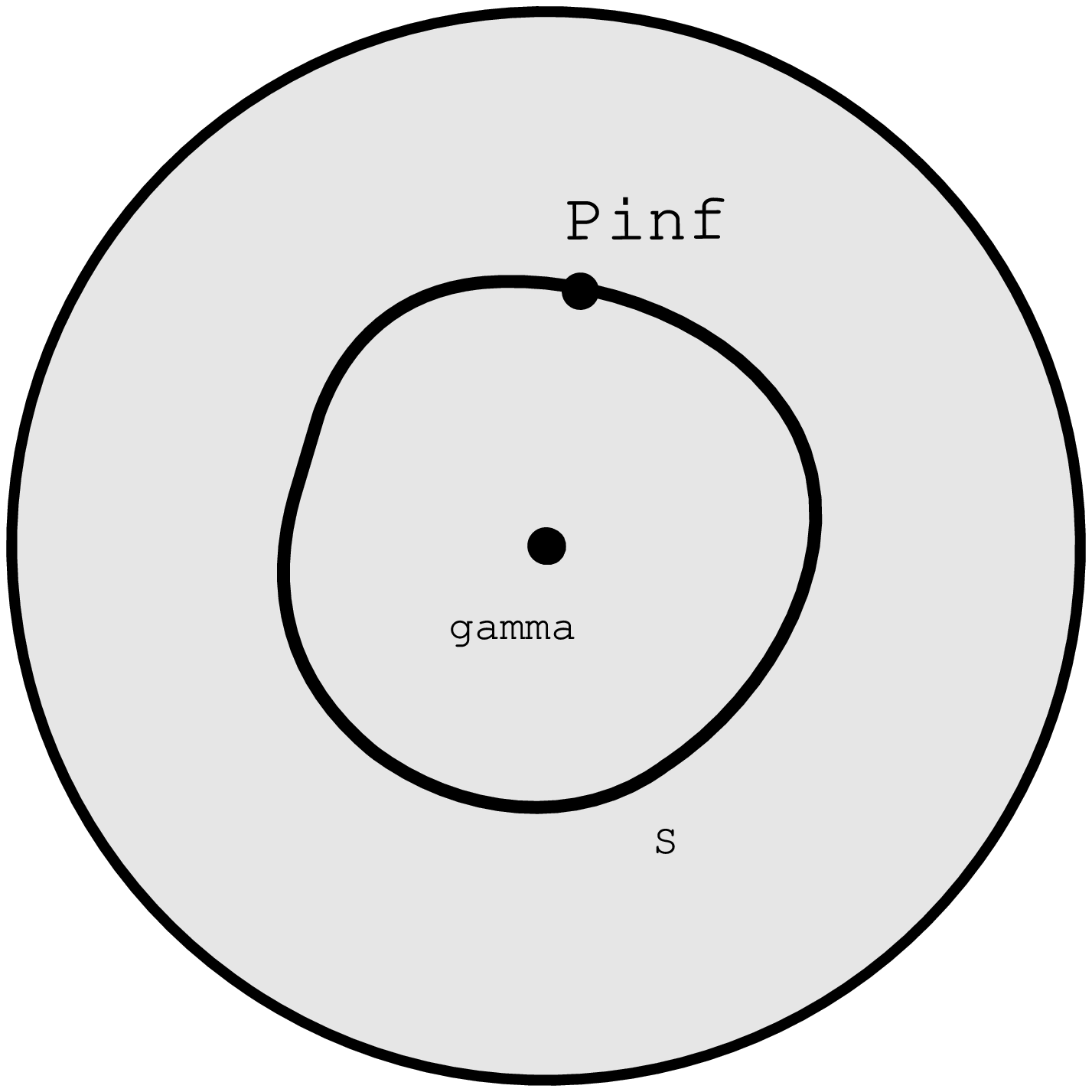}
\caption{Illustrating the worst case scenario when the only two spanning orbits, besides the twist orbit 
$\gamma_p$, collapse onto a single orbit $\sigma$ when taking a limit of a sequence of non-degenerate perturbations.}
\label{F:collapsing}
\end{center}
\end{figure}
For each $j$ there exists a closed cycle of rigid leaves in $\Ftilde_j$, surrounding $\gamma_p$.  
Let $S_j$ denote the intersection of such a closed cycle with the disk slice $D_0$.  
{Each $S_j$ is a continuously embedded closed loop in $D_0$, see remark 
\ref{R:leaves_intersect_disk_slice_transversally}.  In particular each is non-empty, compact and 
connected. Taking a subsequence, we may assume that the sequence of sets $S_j$ converges 
in the Hausdorff metric sense, to a non-empty compact set $S_\infty\subset D$.  Each $S_j$ is connected 
implies that $S_\infty$ is connected.  (Although the path connectedness need not pass to the limit.)  
Moreover, if $S_\infty$ is disjoint from $p$ and from $\partial D$, it is not hard to see that 
$p$ and $\partial D$ must both lie in different components of $D\backslash S_\infty$.  We will establish 
that $S_\infty\cap(\{p\}\cup\partial D)=\emptyset$ in a moment.} 

As $j\rightarrow\infty$ the spanning orbits corresponding to fixed points in $S_j$ converge 
to $\sigma$.  This means that all their periods converge to that of $\sigma$.  Which means, by Stokes' 
theorem, that the $d\lambda_j$-energy 
of the leaves in the $S_j$ cycles converge uniformly to zero as $j\rightarrow\infty$.  Thus if $F_j\in\Ftilde_j$ 
is any sequence of leaves for which the projection $\pr(F_j)$ meets $D_0$ in a point in $S_j$, then any convergent subsequence 
gives in the limit a pseudoholomorphic whole cylinder $C$ having zero $d\lambda$-energy, and 
finite $E$-energy.  
Such a solution $C$ is either constant or is a spanning cylinder for a periodic orbit (even though the periodic orbits 
are not necessarily non-degenerate).  From the homology class of 
$C$ we find that it is non-constant, and moreover that its spanning orbit corresponds to a fixed point of $\psi$.  

{Applying this idea to various $\R$-translates of such leaves $F_j\in S_j$, we can achieve every point 
in the set $S_\infty$ as such a limiting fixed point.  It follows that every point $q\in S_\infty$ is 
a fixed point of $\psi$ and 
satisfies the following two alternatives: either $q=p$, or if $q\neq p$ then it must have linking number $k$ with $p$.  The twist condition implies that on a small punctured neighborhood of $p$ there are no fixed 
points having this linking number $k$ with $p$, and so $S_\infty$ is disjoint from some punctured 
neighborhood of $p$.  Moreover, $S_\infty$ contains at least one point distinct from $p$ because it 
contains the periodic orbit $\sigma$.  It follows, since $S_\infty$ is connected, that it is also disjoint from $p$.  Similar arguments show 
that $S_\infty$ is disjoint from the boundary of $D$.}  

Thus, as mentioned above, since $p$ and $\partial D$ are disjoint from $S_\infty$, they lie in 
different components of the complement $D\backslash S_\infty$.  The set $S_\infty$ must therefore 
have infinitely many points.  In particular at least two points.  {These are fixed points 
with the desired linking number $k$.}  
\end{proof}

\subsection{A Proof of a Theorem of Franks and Handel}\label{S:proof_of_a_theorem_of_Franks_and_Handel} 
We could summarize the last section by saying loosely that if an area preserving disk map has 
``twisting'' in a suitable sense, then one can find periodic orbits and distinguish them by topological 
means, namely by their linking numbers.  

To make this precise, let us say that a diffeomorphism $\psi:D\rightarrow D$ has ``twisting'' if, after 
making a choice that allows to define real valued rotation numbers, for example an isotopy to the identity, 
or a generating mapping torus, there exists an interior fixed point $z\in D$ such that 
\begin{equation}\label{D:twisting}
 		\Rot_{\psi}(z)\neq \Rot_\psi(\partial D),
\end{equation}
where the left hand side is the total infinitesimal rotation number of $\psi$ at $z$, and the right hand 
side is the total rotation number of $\psi$ on the boundary of $D$.  One way to define these is described  
in Section \ref{S:real_valued_rotation_numbers}, both real valued numbers.  

An equivalent way of saying this is that the twist interval of the fixed point $z$ is non-empty.  The 
twist interval grows linearly with iterates, so that for a high enough iterate it contains an integer, 
\[
 			\Twist_{\psi^n}(z)\cap\Z\neq\emptyset.  
\] 
In this situation the Poincar\'e-Birkhoff fixed point theorem, as stated in Theorem 
\ref{T:Poincare-Birkhoff_fixedpoint_thm_for_disks}, applies to $\psi^n$ and we find that for 
each integer $k\in\Twist_{\psi^n}(z)$, there exists a fixed point $x\in\Fix(\psi^n)$ (actually two fixed points) 
which has linking number $k$ with $z$, $\lk_{\psi^n}(x,z)=k$.  

It is a simple matter to work with an unbounded sequence of iterates $\psi^{n_j}$, and pick integers $k_j$ in the 
twist interval $\Twist_{\psi^{n_j}}(z)$ to be primes, and so obtain a sequence of fixed points $x_j$ for which the 
average linking numbers with $z$, 
\[
 			\frac{\lk_{\psi^{n_j}}(x_j,z)}{n_j}=\frac{k_j}{n_j}, 
\]
are already in lowest form as fractions and are therefore pairwise distinct.  Thus the sequence $(x_j)_j\in\N$ 
is a sequence of periodic points of $\psi$, lying on pairwise distinct orbits.  Moreover, $x_j$ has minimal 
period $n_j$, with $n_j\rightarrow\infty$ as $j\rightarrow\infty$.  Thus, a disk map having a twist, or some 
iterate with a twist, has infinitely many periodic orbits, and moreover periodic orbits of unbounded minimal period.  

In \cite{Neumann} the argument is pushed as far as possible.  If $\psi$ has a fixed point with non-empty 
twist interval $(a,b)$, $a,b\in\R$, then for each $N\in\N$, the number $\mu_\psi(N)$ of periodic orbits 
of $\psi$ having minimal period less than or equal to $N$ grows like $|b-a|N^2$.  Indeed, 
for each rational number $p/q\in\Q$ in $(a,b)$ the argument above gives us a periodic point of $\psi$ 
having average linking number $p/q$.  So $\mu_\psi(N)$ is at least the number of rationals in $(a,b)$ 
having denominator at most $N$ when written in lowest form.  This is equivalent to counting lattice 
points in a triangle, having relatively prime coordinates, 
\[
 	\mu_\psi(N)\geq|\{\, (p,q)\in\Z\times\Z\ |\, Na<p<Nb,\ 1\leq q\leq N\mbox{ and }\gcd(p,q)=1\}|.  
\]
A little number theory, see \cite{Neumann,Hingston}, tells us precisely what the limit of the quotient of 
the right hand side with $N^2$ is, leading to the following asymptotic estimate from below
\begin{equation}\label{E:growth_rate_for_twistmaps}
 			\lim_{N\rightarrow\infty}\frac{\mu_\psi(N)}{N^2}\geq\frac{3|b-a|}{\pi^2}.  
\end{equation}
To summarize: a disk map with a twist has at least quadratic growth of 
periodic orbits.  Although one might generically expect a far higher growth rate, quadratic is the best possible for general twist maps, see examples in \cite{Neumann}.  

Birkhoff himself applied these ideas to the problem of finding infinitely many closed geodesics on $2$-spheres.  
In many cases he showed that the dynamics can be related to that of an area preserving diffeomorphism of 
an annulus, possibly without boundary.  This holds for example whenever the metric has everywhere positive 
curvature.  When the so called Birkhoff map has a twist he could apply his fixed point theorem as above to 
complete the argument.  

But in the absence of a twist the problem of detecting infinitely many periodic orbits becomes much subtler.  
Indeed, a twist allows to conclude periodic orbits of unbounded period and to detect them by topological 
means.  But this clearly does not work for example on the identity map of the disk which has infinitely many 
periodic orbits but which are indistinguishable by linking numbers.  It was an open problem for 
some time, whether every smooth Riemannian metric on $S^2$ admits infinitely many distinct (prime) closed geodesics.  

In 1992 John Franks proved the following celebrated result, \cite{Franks} originally stated for annulus maps.  
\begin{theorem}[Franks]\label{T:Franks_theorem}
Let $\psi$ be any area preserving, orientation preserving, homeomorphism of the open or closed unit disk in 
the plane.  If $\psi$ has $2$ fixed points then it has infinitely many interior periodic orbits.  
\end{theorem}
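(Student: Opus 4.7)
The plan is to combine the two principal results of this section---Theorem \ref{T:Poincare-Birkhoff_fixedpoint_thm_for_disks} (Poincar\'e--Birkhoff) and ``Theorem'' \ref{T:two_porbits_implies_id_or_twist} (the dichotomy: a map with two periodic orbits has some iterate which is either the identity or exhibits a twist)---with the counting argument already exhibited in the discussion leading to \eqref{E:growth_rate_for_twistmaps}. Working in the smooth category and assuming the extra boundary hypothesis acknowledged in the text (that $\psi$ restricts to an irrational rigid rotation on $\partial D$, so that the two given fixed points are interior), I would fix a Reeb-like mapping torus $(\T_1,\lambda_1)$ generating $\psi$ by Lemma \ref{L:existence_of_contact_form}.

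Applying ``Theorem'' \ref{T:two_porbits_implies_id_or_twist} to the two fixed points yields an integer $N\in\N$ such that either $\psi^N=\id_D$, in which case every interior point is periodic and there is nothing left to prove, or some interior fixed point $z$ of $\psi^N$ has non-empty twist interval $\Twist_{\psi^N}(z)=(a,b)$ with respect to $(\T_N,\lambda_N)$. In the latter case, twist intervals scale linearly under iteration, so $\Twist_{\psi^{Nm}}(z)=(ma,mb)$ contains arbitrarily many integers as $m\to\infty$. For each integer $k\in(ma,mb)$, Theorem \ref{T:Poincare-Birkhoff_fixedpoint_thm_for_disks} applied to $\psi^{Nm}$ with the pre-selected fixed point $z$ produces two further fixed points of $\psi^{Nm}$ whose corresponding periodic orbits link $z$ with linking number exactly $k$. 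Picking, for each large $m$, a prime $k_m\in(ma,mb)$ ensures that the average linking numbers $k_m/(Nm)$ are pairwise distinct fractions in lowest terms, so the resulting periodic points of $\psi$ lie on pairwise distinct orbits of unbounded minimal period; the same counting produces the quadratic lower bound \eqref{E:growth_rate_for_twistmaps}.

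The main obstacle is ``Theorem'' \ref{T:two_porbits_implies_id_or_twist}, which the excerpt does not prove. The natural strategy is this: given two periodic orbits $\gamma_1,\gamma_2$ in a common mapping torus $\T_n$, such that no iterate of $\psi$ is the identity and no iterate exhibits a twist, select $\gamma_1$ (after passing to an iterate, so it has odd parity Conley--Zehnder index) as a pre-selected spanning orbit in Theorem \ref{T:existence_of_foliations:_strongest_statement}, producing a finite energy foliation $\Ftilde^k(\gamma_1)$ with boundary condition $k\neq\lk(\gamma_1,\gamma_2)$. The absence of twist at $\gamma_1$ forces the foliation to be simple near $\gamma_1$ via Lemma \ref{L:nonsimple_foliation_implies_twist_and_converse}(2), and a degeneration analysis in the spirit of Step 2 of Theorem \ref{T:Poincare-Birkhoff_fixedpoint_thm_for_disks} should extract a contradiction from the existence of the second orbit $\gamma_2$ together with the rigidity of the resulting holomorphic cylinders. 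A secondary obstacle is removing the additional boundary hypothesis to recover the full $C^0$ statement of Franks: this would require a $C^\infty$-approximation combined with a compactness argument for the sequence of finite energy foliations associated to each approximating diffeomorphism.
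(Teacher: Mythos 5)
Your overall route matches the paper's: Theorem \ref{T:Franks_theorem} is recovered (in the smooth category and with the rigid-rotation boundary hypothesis, both of which the paper acknowledges) by combining ``Theorem'' \ref{T:two_porbits_implies_id_or_twist} with Theorem \ref{T:Poincare-Birkhoff_fixedpoint_thm_for_disks}, and then using the prime-denominator counting argument from Section \ref{S:proof_of_a_theorem_of_Franks_and_Handel} to get orbits of unbounded minimal period (and the quadratic growth bound \eqref{E:growth_rate_for_twistmaps}). You also correctly flag that the paper does not give a self-contained proof of the original $C^0$ statement---it cites Franks \cite{Franks} for that---and that the holomorphic curve framework currently only reaches a weaker smooth version with the boundary assumption.

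Two points deserve attention. First, a small imprecision: ``Theorem'' \ref{T:two_porbits_implies_id_or_twist} as stated gives the dichotomy ``$\psi=\id$ or some iterate has a twist,'' not ``$\psi^N=\id$.'' (This is immaterial for the conclusion, since $\psi^N=\id$ would still give infinitely many periodic orbits, and in fact by Ker\'ekj\'art\'o's theorem a finite-order disk map with two fixed points is already the identity, but you should quote the dichotomy as the paper states it.) Second, and more substantively, the strategy you sketch for ``Theorem'' \ref{T:two_porbits_implies_id_or_twist} is not the paper's. You propose building a simple foliation through $\gamma_1$ with boundary condition $k\neq\lk(\gamma_1,\gamma_2)$ and then extracting a contradiction from $\gamma_2$; there is no obvious contradiction there---such simple foliations can and do exist even when $\psi\neq\id$. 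The paper's actual argument is constructive rather than by such a contradiction: assuming no iterate ever twists, it shows (Step 1) the boundary rotation number is an integer, (Step 2) all odd fixed points share the same $\lambda_1$-action via Stokes estimates on the half-cylinder leaves of simple foliations for a degenerating sequence of non-degenerate perturbations, and (Step 3) that the $d\lambda$-energies of all leaves decay to zero along the perturbation sequence, forcing the limiting foliation to be the vertical foliation $\Ftilde^\nu(\T_1,\lambda_1)$, which is a finite energy foliation precisely when every orbit is periodic and homologous to the longitude---i.e.\ when $\psi$ is the identity. Your sketch neither uses the action equalization nor the degeneration to the vertical foliation, which are the two mechanisms that actually close the argument.
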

A remarkable aspect of this statement is that no twisting type of condition is required.  Let us point 
out that by imposing seemingly mild extra assumptions one can unwittingly introduce a twist into the 
system, thereby allowing much simpler arguments which miss the whole point of this theorem.  Two examples 
which fall into this category, the first of which is a familiar one for symplectic geometers, are 
the assumption of non-degeneracy of periodic orbits, or that the rotation number on the boundary of 
the disk is irrational.  

Franks' theorem completed the proof of the conjecture regarding closed geodesics on the two-sphere, in the 
cases where Birkhoff's annulus map is well defined.  At the same time Bangert found a clever way to handle 
the cases where the Birkhoff map is not defined, \cite{Bangert}.  Let us also note that shortly thereafter 
Hingston found another route to the closed geodesics question, using equivariant Morse homology \cite{Hingston}, 
bypassing the part of the argument that required Theorem \ref{T:Franks_theorem}.  

An interesting strengthening of Franks' theorem is the following result due to Franks and Handel 
\cite{Franks2} in 2003.  In fact much more generally, they proved analogous statements 
for Hamiltonian diffeomorphisms on general closed oriented surfaces.  All of these results were later extended 
to the $C^0$-case by Le Calvez \cite{LeCalvez4}. 

\begin{theorem}[Franks-Handel]\label{T:Franks-Handel}
For a $C^\infty$-smooth Hamiltonian diffeomorphism $\psi:S^2\rightarrow S^2$ 
having at least three fixed points, either there is no bound on the minimal period of its periodic orbits, 
or $\psi$ is the identity map.  
\end{theorem}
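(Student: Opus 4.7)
The plan is to reduce the problem on $S^2$ to the disk map framework of Section \ref{S:holomorphic_curves_and_disk_maps} and then combine ``Theorem'' \ref{T:two_porbits_implies_id_or_twist} with the iterated Poincar\'e--Birkhoff argument from the start of Section \ref{S:proof_of_a_theorem_of_Franks_and_Handel}. Pick one of the three fixed points, call it $p\in S^2$, and use a Darboux chart around $p$ to perform a symplectic blow-up: this replaces $p$ with a boundary circle and converts $\psi$ into a smooth area-preserving disk diffeomorphism $\tilde\psi:D\to D$ whose rotation on $\partial D$ is determined by the eigenvalues of $d\psi_p$. The other two fixed points of $\psi$ become interior fixed points of $\tilde\psi$. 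After a small Hamiltonian perturbation in a collar of $\partial D$, we may arrange in addition that the boundary rotation is irrational, so Lemma \ref{L:existence_of_contact_form} supplies a Reeb-like mapping torus $(Z_1,\lambda_1)$ generating $\tilde\psi$, which in turn provides real-valued rotation numbers and linking numbers as in Section \ref{S:real_valued_rotation_numbers}.

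Now apply ``Theorem'' \ref{T:two_porbits_implies_id_or_twist} to $\tilde\psi$: since it has at least two interior periodic orbits (the two remaining fixed points), some iterate $\tilde\psi^N$ is either the identity on $D$ or has a twist. In the twist case, Theorem \ref{T:Poincare-Birkhoff_fixedpoint_thm_for_disks} applies to $\tilde\psi^N$, and running the iteration scheme from the start of Section \ref{S:proof_of_a_theorem_of_Franks_and_Handel} -- passing to further iterates $\tilde\psi^{Nn_j}$, choosing primes $k_j$ in the linearly growing twist intervals, and extracting fixed points with pairwise distinct average linking numbers $k_j/n_j$ with the twist orbit -- yields periodic orbits of $\psi$ with unbounded minimal period, which is the first alternative of the theorem. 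In the identity case $\tilde\psi^N=\id_D$ implies, via the blow-up, that $\psi^N=\id_{S^2}$ on the dense set $S^2\setminus\{p\}$, hence everywhere by continuity. Thus $\psi$ has finite order and is therefore smoothly conjugate to a rotation of the round sphere; such a rotation has exactly two fixed points unless it is the identity, and the three fixed point hypothesis forces the latter, so $\psi=\id_{S^2}$.

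The main obstacle is carrying out the reduction to a disk map with irrational boundary rotation that faithfully reflects the original map on $S^2$. A naive blow-up yields a disk map whose boundary rotation equals the rotation number of $d\psi_p$, which may be rational, while both ``Theorem'' \ref{T:two_porbits_implies_id_or_twist} and Theorem \ref{T:existence_of_foliations:_strongest_statement} currently require irrational boundary behavior -- the ``additional boundary condition'' flagged in the introduction to Section \ref{S:holomorphic_curves_and_disk_maps}. Any perturbation used to irrationalize the boundary must be controlled so that new fixed points created in the collar do not corrupt the counting: in the twist case their linking numbers with the twist orbit must avoid the chosen integers $k_j$, and in the identity case a limit over shrinking perturbations is needed to conclude $\psi^N=\id$ from $(\tilde\psi')^N=\id$ for a sequence of perturbed maps $\tilde\psi'$. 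Making this reduction rigorous, rather than the symbolic dynamics/foliation argument itself, is where the real work lies.
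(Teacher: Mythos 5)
Your proposal attempts to prove a result that the paper itself does not prove. Theorem \ref{T:Franks-Handel} is stated as a known theorem, attributed to Franks and Handel \cite{Franks2} (with a later $C^0$-extension by Le Calvez \cite{LeCalvez4}); no proof is offered. The paper's own machinery --- ``Theorem'' \ref{T:two_porbits_implies_id_or_twist} combined with the Poincar\'e--Birkhoff theorem --- is said by the authors to recover only ``the analogous result of Franks and Handel, Theorem \ref{T:Franks-Handel}, but for disk maps instead of maps on $S^2$,'' and even then ``albeit currently with an additional boundary condition.'' So you are attempting exactly the step the paper deliberately declines to take.

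That step is also harder than the proposal suggests. In real dimension two there is no useful symplectic blow-up of a point (${\mathbb C}P^0$ is a point, so the operation just deletes a ball); what you need is the real, oriented blow-up, which replaces $p$ with the circle of directions at $p$ so that $\psi$ lifts to a closed-disk diffeomorphism acting on $\partial D$ via $d\psi_p$. But the pulled-back area form vanishes along that boundary circle ($\omega\sim r\,dr\wedge d\theta$), so the lift preserves only a degenerate $2$-form, and Lemma \ref{L:existence_of_contact_form} --- which requires a $C^\infty$ diffeomorphism of $D$ preserving $dx\wedge dy$ --- does not apply. Rewriting in the symplectic radial coordinate $s=r^2/2$ makes the form $ds\wedge d\theta$ nondegenerate, but $s\mapsto\sqrt{2s}$ is not a smooth change of coordinates on the blow-up, so smoothness of the lifted map in the new chart is no longer automatic and depends delicately on the jet of $\psi$ at $p$. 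Likewise, the collar perturbation used to irrationalize the boundary rotation will in general create new periodic orbits near $\partial D$ whose linking numbers must be controlled, and in the ``identity'' alternative you must pass $(\tilde\psi')^N=\id$ back to $\psi^N=\id$ through a limit of perturbations --- none of which is carried out. The items you defer under ``where the real work lies'' are, collectively, a substantial part of what Franks and Handel actually prove, and are precisely why the present paper confines its holomorphic-curve argument to disk maps.
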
 

In the case when the map is not the identity, they obtain the following lower bound on the growth rate 
of periodic orbits.  There exists a constant $c>0$, depending on $\psi$, such that 
the number of periodic orbits $\mu_\psi(N)$ with period less than or equal to $N\in\N$ satisfies
\[
 		\mu_\psi(N)\geq cN  
\]
for all $N\in\N$.
A significant improvement on this growth rate estimate follows from work by Le Calvez \cite{Calvez}.  The 
main result in \cite{Calvez} can be combined with the Franks-Handel theorem or \cite{LeCalvez4}, to conclude at least quadratic growth of periodic orbits.  Namely, that there exists $c>0$ such that $\mu_\psi(N)\geq cN^2$ 
for all $N\in\N$.  

Since this is exactly the growth rate well known for twist maps, described in (\ref{E:growth_rate_for_twistmaps}) above, it raises the following question: if 
$\psi$ is a disk map satisfying the criteria of the Franks-Handel theorem, and $\psi$ is not the identity, 
is some iterate of it actually a twist map - that is, has a fixed point with non-empty twist 
interval.  As far as the authors are aware, the following is new.   
 
\begin{``theorem''}[Bramham]\label{T:two_porbits_implies_id_or_twist}
Let $\psi:D\rightarrow D$ be a $C^\infty$-smooth, area preserving, orientation preserving, 
diffeomorphism having at least two fixed points, and coinciding with a rigid rotation on the boundary of 
the disk.  Then either $\psi$ is the identity map, or there exists $n\in\N$ for which $\psi^n$ is a twist 
map in the following sense.  {There exists an interior fixed point $p\in D$ of $\psi^n$, having 
non-empty twist interval.}   
\end{``theorem''}
The quotation marks are because it relies on ``Theorem'' \ref{T:existence_of_foliations:_strongest_statement}.  
Presumably the condition of being a rotation on the boundary is unnecessary.  But it is not yet 
clear how to remove it.  

Notice that this statement combined with the Poincar\'e-Birkhoff fixed point theorem gives us the analogous 
result of Franks and Handel, Theorem \ref{T:Franks-Handel}, but for disk maps instead of maps on $S^2$.  By the 
discussion above it also implies the sharp quadratic growth lower bound on periodic orbits.  Moreover, it 
allows to partition the set of area preserving diffeomorphisms of the disk into the following three disjoint subsets:  
\begin{enumerate}
 \item \textbf{Pseudo-rotations:} those maps with a single periodic orbit.  
 \item \textbf{Roots of unity:} those maps for which some iterate is the identity.  
 \item \textbf{Twist maps:} in the sense that some iterate has a fixed point with non-empty twist interval.   
\end{enumerate}
The following proof will appear in \cite{Bramham1}.  
\begin{proof}(Of \ref{T:two_porbits_implies_id_or_twist})
Suppose that $\psi$ has two fixed points, and that for all $n\in\N$, $\psi^n$ has no twisting.  
In other words, every interior fixed point, of every iterate, has empty twist interval.  Then we will show 
that $\psi(z)=z$ for all $z\in D$, by showing that the vertical foliation 
(definition \ref{D:vertical_foliation}) is a finite energy foliation.  

Pick a suitable Reeb-like mapping torus $(\T_1,\lambda_1)$ generating $\psi$.  This gives us for each 
$n\in\N$ a mapping torus $(\T_n,\lambda_n)$ generating the iterate $\psi^n$.  With respect to 
these choices we associate real valued rotation numbers to $\psi$ and its iterates at each fixed 
point and to the boundary of $D$.  We proceed in three steps. 

\textbf{Step 1:} The (real valued) rotation number of $\psi$ on the boundary is an integer.  

Suppose not.  Then we may assume that $\psi$ has only non-degenerate fixed points, as such a fixed point has 
integer infinitesimal rotation number, and then the condition of no twisting would imply that the rotation 
number on the boundary is also an integer and we would be done.   

Using that $\psi$ has at least two fixed points one can show, using the foliations for 
example, or even just the Lefschetz fixed point formula, that there must be a hyperbolic fixed 
point.  Since a hyperbolic fixed point has integer infinitesimal rotation number the lack of 
twisting implies that the rotation number on the boundary is an integer.  

Let us say that a fixed point is \emph{odd} if the eigenvalues of the linearization lie either 
on the unit circle or the negative real line.  Equivalently, the corresponding periodic Reeb orbit 
is either degenerate or is non-degenerate and has odd parity Conley-Zehnder index.  
 
\textbf{Step 2:} In this step we show that all odd fixed points have the same $\lambda_1$-action,   
where we define the $\lambda_1$-action of a fixed point $x\in D$ to be the value 
\[
 			\A(x)=\int_{S^1}\gamma^*\lambda_1
\]
where $\gamma:S^1\rightarrow Z_1$ is the periodic orbit corresponding to $x$.  

Let $k\in\Z$ be the total rotation number of $\psi$ on the boundary.  
Being an integer $\psi$ has at least one fixed point on the boundary of $D$.  Let $a_0>0$ denote the 
$\lambda_1$-action of any of these boundary fixed points.  
It suffices to show that every interior odd fixed point of $\psi$ has $\lambda_1$-action 
equal to $a_0$.  

Fix any interior odd fixed point $x$.  Let $\varepsilon>0$.  We will show that
\[
 				|\A(x)-a_0|<\varepsilon. 
\]
Let $\gamma_x:S^1\rightarrow\T_1$ be the corresponding closed Reeb orbit.  
Pick $n\in\N$ large enough that $\hbox{Area}_{d\lambda_1}(D)/n<\varepsilon$.  
Take a sequence of non-degenerate perturbations $\lambda^j_1$ of $\lambda_1$ for which $\gamma_x$ is 
a closed Reeb orbit for each $\lambda^j_1$ also, with action converging to $\A(x)$.  Since $x$ is an 
odd fixed point of $\psi$, we may do this in such a way that $\gamma_x$ has odd Conley-Zehnder index 
with respect to $\lambda^j_1$.  Assume that we also 
perturbed near the boundary of $\T_1$ so that the first return map $\psi_j$ is a rotation on the boundary of $D$ 
with rotation number $k+\delta_j$, where for each $j$, $\delta_j\in(0,1)$ is irrational.  

By assumption $\psi^n$ has no twist fixed points.  Thus every twist interval has length zero, and   
$\Rot_{\psi^n}(p)=\Rot_{\psi^n}(\partial D)=nk$ for all $p\in\Fix(\psi^n)$.  
Thus for sufficiently small perturbations, that is for $j$ sufficiently large, we may assume 
that for each fixed point $p$ of $\psi_j^n$, its twist interval is in a neighborhood of $kn$.  For 
example we may assume that  
\[
 			\Twist_{\psi_j^n}(p)\subset \left(kn-\half,kn+\half\right).  
\]
Recall, by part (2) of Lemma \ref{L:nonsimple_foliation_implies_twist_and_converse}, that any finite 
energy foliation with boundary condition outside of the twist interval of each of its spanning orbits 
is simple.  Using ``Theorem'' \ref{T:existence_of_foliations:_strongest_statement} there exists a 
finite energy foliation $\Ftilde_j$ associated to $\lambda_j$ on $Z_n$, having $\gamma_x^n$ as a 
spanning orbit and any boundary condition we choose.  Let us take the boundary condition to be $kn+1$.  As 
this is outside of every twist interval $\Ftilde_j$ must be simple.  

It follows that there exists a half cylinder leaf in $\Ftilde_j$ which connects $\gamma_x^n$ to the 
boundary.  Positivity of the $d\lambda_j$-energy of this leaf and Stokes theorem give us the following 
estimate:  
\[
 	\int_{S^1}(\gamma_j^n)^*\lambda_j<\int_{L_n}\lambda_j + (nk+1)\int_{\partial D}\lambda_j
\]
where $L_n$ is any representative of the canonical longitude on $\partial\T_n$.  Dividing by 
$n$ and letting $j\rightarrow\infty$ leads to 
\[
 		\A(x)\leq a_0 + \frac{1}{n}\hbox{Area}_{d\lambda_1}(D)< a_0 + \varepsilon.  
\]
To complete the argument we need the lower bound $\A(x)>a_0 - \varepsilon$ which can be obtained in the 
same way by changing the boundary conditions to $nk-1$.

\textbf{Step 3:} In this step we show that the vertical foliation $\Ftilde^\nu(\T_1,\lambda_1)$ 
is also a finite energy foliation!  

See definition \ref{D:vertical_foliation} to recall how we define a vertical foliation.  
Take any sequence of approximating data 
$\lambda^j_1\rightarrow\lambda_1$ with the only requirement that each be non-degenerate 
(for fixed points at least) and have irrational rotation number on the 
boundary.  

Let $\Ftilde_j$ be a finite energy foliation associated to $(\T_1,\lambda^j_1)$ having boundary condition 
$k$.  Recall that $k\in\Z$ is the rotation number of $\psi$ on the boundary.  There is no need to select a spanning orbit.  Then we claim that the $d\lambda$-energies of all leaves 
converge to zero, that is $E_j\rightarrow 0$ where 
\[
 		E_j:=\sup\left\{\,\int_F d\lambda_1^j\, |\, F\in\Ftilde_j\,\right\}.
\]
Roughly this is via the following argument:  The value $E_j$ must be achieved by a leaf in $\Ftilde_j$ 
having Fredholm index $2$, since all other leaves have index $0$ or $1$ and lie on the boundary of a 
family of index-$2$ leaves, and the $d\lambda_1^j$-integral is lower semi-continuous under 
$C_{\textup{loc}}^\infty$-convergence.  Any leaf having Fredholm index $2$ has only odd index asymptotic orbits 
(or orbit).  Any sequence of such leaves has $d\lambda_1^j$-energy decaying to zero because in the limit 
the asymptotic orbits converge to odd periodic orbits of $\lambda_1$, which by step (2) all have the same action.  

It follows that the sequence $\Ftilde_j$, which has uniformly bounded $E$-energy, converges to the 
vertical foliation $\Ftilde^\nu(\T_1,\lambda_1)$ (one can also show that the limiting leaves are not constants).  Thus the vertical foliation is a finite energy foliation, which means that every 
leaf is a cylinder over a periodic orbit.  Thus the mapping torus $(Z_1,\lambda_1)$ is 
foliated by periodic orbits.  Moreover, each cylinder must be over a periodic 
orbit homologous to the longitude in $\T_1$, thus each periodic orbit represents a fixed point of $\psi$.  
\end{proof}
 
\section{Asymptotic Foliations for Disk Maps}\label{S:asymptotic_foliations}
In this final section we begin exploring the following broad question.  What is the asymptotic 
behavior of the foliations associated to arbitrarily high iterates of a disk map?  Do the leaves converge in 
a useful sense, and if so, in what way does the limit reflect anything interesting dynamically?  
In contrast, the applications in Section \ref{S:holomorphic_curves_and_disk_maps} only used the framework for  arbitrarily high, but finite numbers of iterates.  

To develop the observations we present in this section presumably requires a serious study of what one could describe as ``locally-finite'' energy foliations.   The context is as follows.  

Let $(Z_1,\lambda_1)$ be a Reeb-like mapping torus, and $J_1$ a compatible almost complex structure 
on $\R\times Z_1$.  Let $ Z_{\infty}=\R\times D$ denote the universal covering space of $Z_1$.  The contact 
form and almost complex structure lift to 
$\lambda_{\infty}$ and $J_\infty$ respectively, which are invariant under the $1$-shift automorphism 
\begin{equation}\label{D:deck_transformation}
 \begin{aligned}
  	\tau: Z_{\infty}&\rightarrow Z_{\infty} \\
		(\z,\x,\y)&\mapsto (\z+1,\x,\y). 
 \end{aligned}
\end{equation}
One would like to take a sequence of finite energy foliations $\Ftilde_n$ associated to 
data $(Z_n,\lambda_n,J_n)$, as $n$ runs over the natural numbers or some subsequence thereof, and 
extract a limiting object $\Ftilde_\infty$ associated to $(Z_\infty,\lambda_{\infty},J_\infty)$.  
One would like to arrange this so that $\Ftilde_\infty$ is a foliation by leaves that are the images of properly embedded pseudoholomorphic curves, where each leaf $F\in\Ftilde_\infty$ has a parameterization 
that is the $C^\infty_{\textup{loc}}$-limit of a sequence of parameterizations of leaves $F_n\in\Ftilde_n$.  
The foliation $\Ftilde_\infty$ would be invariant under the $\R$-action 
$c\cdot(a,m)\mapsto(a+c,m)$ on $\R\times Z_\infty$, but need not be invariant under the $\Z$-action 
generated by the transformation $\id\times\tau$, or even a finite iterate $\id\times\tau^q$.   

When the contact manifold is compact it was discovered in \cite{H1} that 
finiteness of the $E$-energy picks out those holomorphic curves which 
do not behave too wildly, see Section \ref{S:notions_of_energy}.  In the non-compact case 
$( Z_{\infty},\lambda_{\infty},J_\infty)$ there cannot be any non-constant curves with finite $E$-energy, since any such curve would pick out a periodic orbit and 
there are no periodic orbits in $( Z_{\infty},\lambda_{\infty})$.  So presumably another, weaker, condition than  
global finiteness of the $E$-energy is required, perhaps some kind of averaging version of the $E$-energy.

Ignoring these crucial technicalities for the moment, we now outline two situations where nevertheless 
it is possible to see what happens asymptotically.   
In the first situation we will describe what happens when the disk map is integrable, the second is when the disk map has only a single periodic orbit, so called irrational pseudo-rotations which seem to have generated renewed interest in recent years \cite{FayadKrikorian,FayadKatok,FayadSaprykina,Kwapisz,BCLP1,BCLP2}.  

\subsection{Detecting Invariant Circles}\label{S:integrable_maps}
{KAM theory guarantees the existence of closed invariant curves near a generic elliptic 
fixed point, or near to a given invariant circle with certain rotation number and torsion 
(infinitesimal twisting) conditions.  In other words this is a, very successful, perturbation theory.  
Computer simulations indicate that generically, or often, invariant circles and quasi-periodic behavior 
should exist in a ``global'' sense, whatever this means precisely.  An interesting source of pictures, 
numerical observations, and many questions, is MacKay's book \cite{MacKay}.  More evidence for these 
global features arises in Aubry-Mather theory, which finds quasi-periodic behavior and invariant circles 
for monotone twist maps of the annulus.  What about without the monotone twist assumption?  Note that if 
one doesn't view the annulus as a cotangent bundle then the monotone twist condition is not symplectic.   
An argument against asking this question is of course that the so called family of standard maps
$f_\tau:\R/\Z\times\R\rightarrow\R/\Z\times\R$, for parameters $\tau\in\R_{>0}$, 
\[
 		f_\tau(x,y)=(x+y+\tau\sin(2\pi x),y+\tau\sin(2\pi x)),
\]
are all monotone twists, and still there are huge open questions regarding these very explicit examples.}  

{In this section we do not claim any new results, or recover any known ones.  We merely describe an  
intriguing mechanism by which finite energy foliations asymptotically pick out all the invariant circles of 
an integrable disk map.  The circles come filtered through their rotation numbers and the elliptic 
periodic orbits they enclose. By integrable, we mean the time-$1$ map of an autonomous Hamiltonian 
$H:D\rightarrow\R$.  Of course, since we are in $2$-dimensions, integrable Hamiltonian systems 
are easily understood without anything so technical as holomorphic curves.  But this mechanism uses 
the integrability in 
a very weak sense that needs to be better understood, and holomorphic curves have little regard 
for local versus global issues.  One might hope therefore that more sophisticated variations of 
this approach work as well in very general situations.}  

Let $H:D\rightarrow \R$ be a smooth function, constant on the boundary of $D$.  Viewing the area form 
$\omega_0:=dx\wedge dy$ as a symplectic form, we have an induced, autonomous, Hamiltonian vector field $X_H$ 
uniquely solving 
\[
 		-dH(x,y)=\omega_0(X_H(x,y),\cdot)  
\]
for all $(x,y)\in D$, and automatically $X_H$ will vanish on the boundary of $D$.  Let 
$\varphi_H^t\in\Diff_{\omega_0}(D)$ denote the induced $1$-parameter family of diffeomorphisms defined 
for each $t\in\R$, and let 
\[
 		\psi_H:D\rightarrow D
\]
denote the time-$1$ map $\varphi_H^1$.  Then $\psi_H$ is an area preserving and orientation preserving  
diffeomorphism, and its long term behavior is easily understood because its orbits remain in level curves 
of $H$.  

Let us imagine that we have picked an $H$ for which the level curves are as in Figure \ref{F:integrable_diskmap}, 
and suppose that $\lambda^H_1$ is a contact form on $\T_1=\R/\Z\times D$ giving us a Reeb-like mapping 
torus for $\psi_H$.  

Let us give names to some of the features in the figure.  There are seven isolated fixed points of $\psi_H$, 
six of which lie on an ``island chain'' and are alternately elliptic and hyperbolic 
$\{h_1,e_1,h_2,e_2,h_3,e_3\}$, which in some sense enclose another elliptic fixed point $e_4$.  

\begin{figure}[htbp]
\psfrag{e1}{\begin{footnotesize}$e_1$\end{footnotesize}}
\psfrag{e2}{\begin{footnotesize}$e_2$\end{footnotesize}}
\psfrag{e3}{\begin{footnotesize}$e_3$\end{footnotesize}}
\psfrag{e4}{\begin{footnotesize}$e_4$\end{footnotesize}}
\psfrag{h1}{\begin{footnotesize}$h_1$\end{footnotesize}}
\psfrag{h2}{\begin{footnotesize}$h_2$\end{footnotesize}}
\psfrag{h3}{\begin{footnotesize}$h_3$\end{footnotesize}}
\begin{center}
\includegraphics[scale=.33]{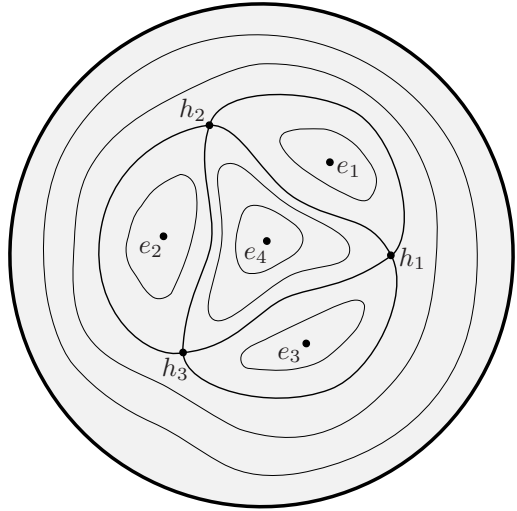}
\caption{{A configuration containing  six fixed points which encloses an elliptic fixed point.}}
\label{F:integrable_diskmap}
\end{center}
\end{figure}

No matter what the flow on $\T_1$ is, provided it generates $\psi_H$ as its first return map, the 
six fixed points on the island chain are ``locked together'' in the sense that any pair of them must 
have the same linking number.  Let us assume that the flow is arranged so that this linking number is 
zero.  In other words, for any distinct pair $x,y\in\{h_1,e_1,h_2,e_2,h_3,e_3\}$ we have $\lk(x,y)=0$, 
if we abbreviate $\lk(x,y)$ to mean the linking number of the corresponding periodic orbits in $(Z_1,\lambda_1^H)$.  Since the island chain lies ``outside'' of $e_4$, it follows that for any $x\in\{h_1,e_1,h_2,e_2,h_3,e_3\}$ 
we have $\lk(x,e_4)=0$ also.  So, since the linking number is a symmetric function of its entries, 
all seven isolated fixed points have a common linking number which is zero.  

Fix any one of the isolated elliptic fixed points, e.g. $e_1$.  Let $\gamma_{e_1}:S^1\rightarrow Z_1$ denote the 
periodic orbit in the mapping torus corresponding to the fixed point $e_1$.  Let $k$ be any integer. 
Potentially $\psi_H$ has other non-isolated fixed points if one of the 
invariant circles has rotation number an integer, in which case Theorem \ref{T:existence_of_foliations:_strongest_statement} only applies to a small perturbation.  
But to see what is going on imagine that we can nevertheless find  a transversal foliation 
$\Ftilde^k(\gamma_{e_1})$ of $\T_1$, which contains $\gamma_{e_1}$ as a spanning orbit, and for which 
the boundary condition is the integer $k$.  Then, by Lemma \ref{L:linking_number_equals_boundary_condition},  
every spanning orbit $\gamma\in\P(\Ftilde^k(\gamma_{e_1}))$ 
distinct from $\gamma_{e_1}$, (if such exists) must satisfy 
\begin{equation}\label{E:linking_numbers_for_integrable_example}
 			\lk(\gamma,\gamma_{e_1})=k.  
\end{equation}
Let us denote by 
\[
 			\P(\T_1,\lambda^H)
\]
the collection of periodic Reeb orbits in $Z_1$ that correspond to fixed points of $\psi_H$, in other 
words those which are homologous to the longitude $L_1$.  It turns out that, provided $k\neq 0$, we can 
characterize the subset of spanning orbits $\P(\Ftilde^k(\gamma_{e_1}))\subset\P(\T_1,\lambda^H)$ 
as precisely those orbits $\gamma\in\P(\T_1,\lambda^H)$ which are either $\gamma_{e_1}$ or which satisfy 
equation (\ref{E:linking_numbers_for_integrable_example})!  This is not true in general, and uses 
extremely weak properties of the integrability of $\psi_H$, and is not true for $k=0$ without more 
information about $\psi_H$.  (A more general statement along these lines, without any integrability 
assumptions, can be made, and may be the subject of a future paper.)  

We may carry out this procedure for all iterates.  The asymptotic behavior is interesting:  First, fix an 
irrational number $\omega\in\R$.  Now, pick any sequence of integers $k_n\in\Z$ such that 
\begin{equation}\label{E:omega_as_limit_of_ave_boundary_conditions}
 		\lim_{n\rightarrow\infty}\frac{k_n}{n}=\omega.
\end{equation}
Let $(Z_1,\lambda^H_1),(Z_2,\lambda^H_2,),....$ be the sequence of Reeb-like mapping tori 
obtained by lifting $(Z_1,\lambda^H_1)$, where $(Z_n,\lambda^H_n)$ generates the $n$-th iterate $\psi_H^n$.  
For each $n\in\N$, let 
\[
		 \gamma_{e_1}^n:S^1\rightarrow Z_n
\]
denote the lift of $\gamma_{e_1}$.  
By Theorem \ref{T:existence_of_foliations:_strongest_statement} we find an almost complex 
structure $J_1$ compatible with $(Z_1,\lambda^H_1)$, such that for each $n\in\N$ there exists 
a finite energy foliation $\Ftilde_n$ associated to $(\T_n,\lambda^H_n,J_n)$, where $J_n$ denotes 
the lift of $J_1$, with the following properties.  
\begin{itemize}
 \item $\gamma_{e_1}^n$ is a spanning orbit for $\Ftilde_n$.  
 \item The boundary condition for $\Ftilde_n$ is the integer $k_n$.  
\end{itemize}

Then the sequence of finite energy foliations $\Ftilde_n$ 
converges in a $C^\infty_{\textup{loc}}$-sense to a locally-finite energy foliation, we will denote by $\Ftilde_\infty$, of the symplectization of the universal covering $( Z_{\infty},\lambda^H_{\infty},J_\infty)$, 
as described at the beginning of section \ref{S:asymptotic_foliations}.    

What do the leaves of $\Ftilde_\infty$ look like?  

\begin{figure}[htbp]
\psfrag{e1}{\begin{footnotesize}$e_1$\end{footnotesize}}
\psfrag{C1}{$C_1$}
\psfrag{C2}{$C_2$}
\psfrag{C3}{$C_3$}
\begin{center}
\includegraphics[scale=.33]{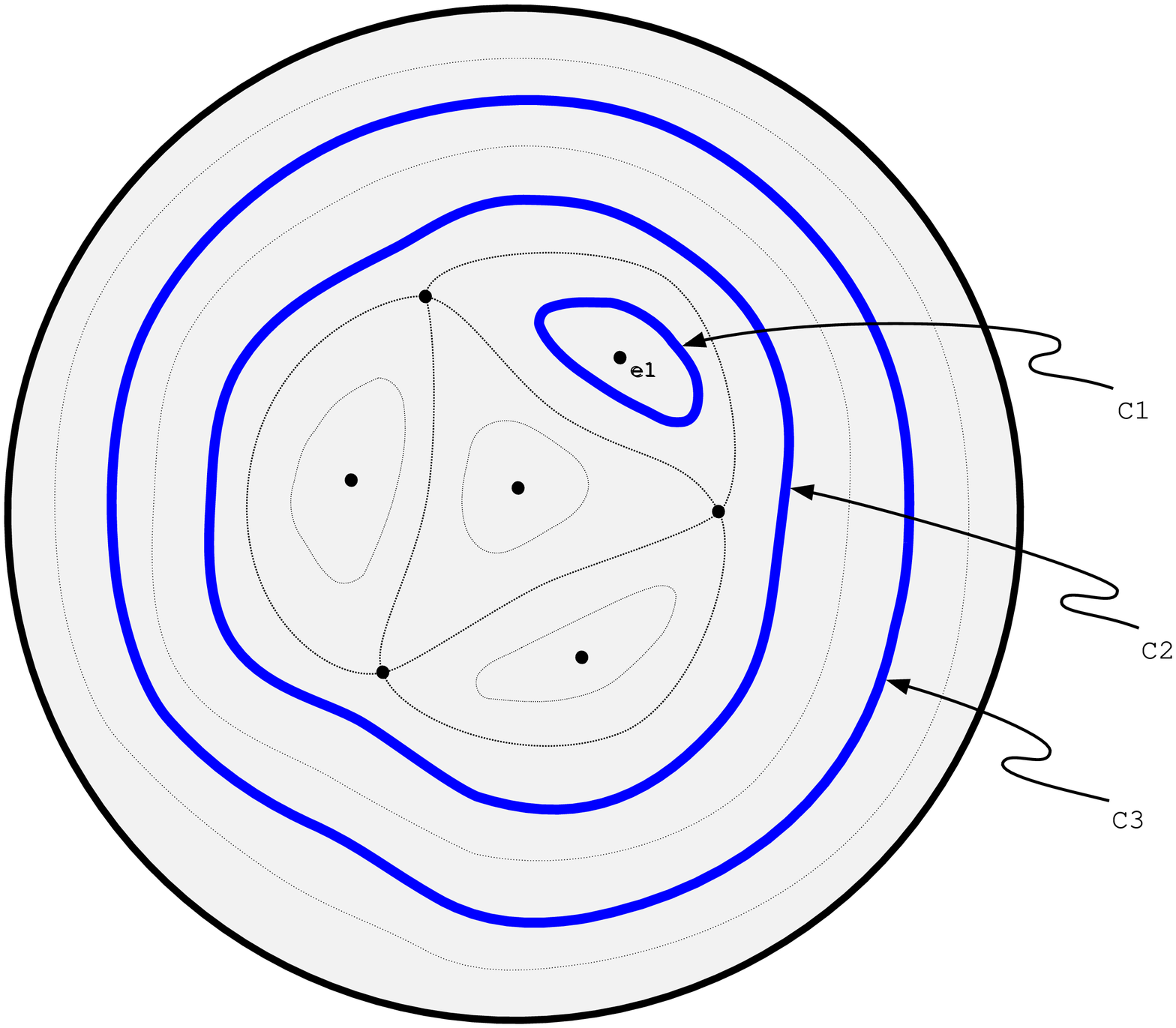}
\caption{}
\label{F:integrable_diskmap_with_some_circles_highlighted}
\end{center}
\end{figure}

Let $C_1,C_2,\ldots,C_m\subset D$ be those invariant circles $C$ of the disk map $\psi_H$ which are 
characterized by the following properties:  
\begin{itemize}
 \item When restricted to $C$, $\psi_H$ has rotation number $\omega$.  
 \item $C$ separates $e_1$ from the boundary of the disk.  That is, $e_1$ and $\partial D$ lie in 
 different components of $D\backslash C$.  
\end{itemize}
We can order these circles so that for each $j\in\{1,\ldots,m-1\}$ the circle $C_{j+1}$ is closer to the 
boundary than $C_{j}$.  Figure \ref{F:integrable_diskmap_with_some_circles_highlighted} 
illustrates a possible scenario.  Then it turns out that $\Ftilde_\infty$ looks something like 
in Figure \ref{F:limiting_foliation_for_integrable_diskmap}.  

\begin{figure}[htb]
\psfrag{e1}{\begin{footnotesize}$e_1$\end{footnotesize}}
\psfrag{R1}{$R_1$}
\psfrag{R2}{$R_2$}
\psfrag{R3}{$R_3$}
\psfrag{R4}{$R_4$}
\begin{center}
\includegraphics[scale=.33]{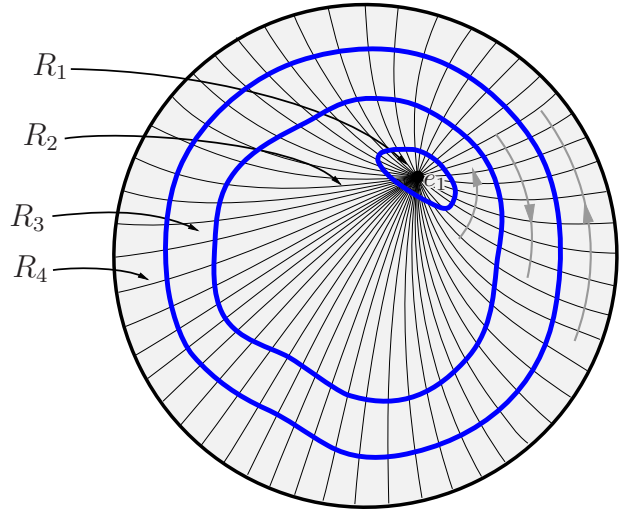}
\caption{A limiting foliation for the integrable disk map $\psi_H$ from 
figure \ref{F:integrable_diskmap}.  
The leaves with vanishing $d\lambda$-energy correspond to the quasi-periodic orbits in the 
invariant circles which have the chosen rotation number $\omega$ and which surround the chosen 
elliptic fixed point $e_1$.  Each leaf with non-zero $d\lambda$-energy is a strip connecting two such 
orbits on neighboring circles.  The flow is transverse to these strips in alternating directions 
as one passes through a circle, as indicated by the grey arrows.}  
\label{F:limiting_foliation_for_integrable_diskmap}
\end{center}
\end{figure}

More precisely, the leaves in $\Ftilde_\infty$ which have vanishing $d\lambda_\infty$-energy correspond 
precisely to these circles and the fixed point $e_1$ itself.  In other words, if $F\in\Ftilde_\infty$, 
has 
\[
 					\int_F d\lambda_\infty=0, 
\]
then $F=\R\times\phi_z(\R)$ is the plane over an orbit $\phi_z:\R\rightarrow Z_\infty$ 
which corresponds to the $\psi$-orbit of a point $z$ in $\{e_1\}\cup C_1\cup\ldots\cup C_m$.  Conversely, 
for every point 
\begin{equation*}
 		z\in\{e_1\}\cup C_1\cup\ldots\cup C_m	
\end{equation*}
the plane with image $\R\times\phi_z(\R)$, where $\phi_z$ is the Reeb orbit passing through the point 
$(0,z)\in Z_\infty$, is a leaf in $\Ftilde_\infty$.  

The remaining leaves can be described schematically as follows.   
Let $R_1,R_2,\ldots,R_m$ denote the open annular regions in the disk for which 
$R_j$ has boundary circles $C_{j-1},C_j$, for $j=1,\ldots m$, and $R_{m+1}$ is the remaining region 
between $C_m$ and $\partial D$.  (So $R_{m+1}$ includes the points in the boundary of $D$ unless $\partial D=C_m$.)  For example see Figure \ref{F:limiting_foliation_for_integrable_diskmap}.  Then each leaf in $\Ftilde_\infty$ having non-zero $d\lambda_\infty$-energy is a plane which projects 
down to an infinitely long open strip in the three-manifold $Z_\infty$.  With respect to the 
coordinates $(\z,\x,\y)$ on $Z_\infty=\R\times D$, the strip extends to plus and minus 
infinity in the $\z$-direction.   The two boundary components of the closure 
of the strip are embedded copies of $\R$.  Indeed, each is the image of a Reeb orbit; one corresponding 
to a point in some $C_j$ or $e_1$, and the other corresponding to a point in $C_{j+1}$ or $C_1$.  
Hence, each leaf with non-zero $d\lambda_\infty$-energy is in some sense asymptotic to a quasi-periodic 
orbit of $\psi_H$, with rotation number $\omega$ about the chosen fixed point $e_1$.

To summarize.  A number of interesting issues arise when attempting to generalize this approach to finding 
invariant circles or quasi-periodic behavior.  There are compactness issues when taking a limit 
of a sequence of finite energy foliations because the $E$-energy must blow up.  Before one even 
takes a limit, there are questions regarding the symmetry of finite energy foliations that also seem 
important.  For example, on the one hand it is much easier to understand symmetry of a foliation 
with respect to the $\Z$-action described in equation (\ref{D:deck_transformation}) when the map is 
integrable.  On the another hand, the integrability is largely irrelevant.  

With the success of KAM results in mind, one knows that Diophantine properties must play a role.    
It is conceivable that local KAM techniques enter the analysis and become combined 
with the pseudoholomorphic curves in this framework.  In the next section 
we describe a more concrete situation where this may be a possibility.  

\subsection{Speculations on Pseudo-Rotations}\label{S:pseudorotations}
Pseudo-rotations (also called irrational pseudo-rotations) can be defined as smooth, orientation 
preserving, area preserving diffeomorphisms of 
the unit disk which have a single fixed point and no other periodic points.  Obvious 
examples are rigid rotations with irrational rotation number, and smooth conjugacies of these.  
The question arises whether these are the only examples.  

As early as 1970 Anosov and Katok constructed ``exotic'' examples of pseudo-rotations which are 
ergodic \cite{AnosovKatok} and which therefore cannot be conjugated to a rotation.  In all their  
examples the rotation number on the boundary is a Liouville number - an irrational number well 
approximated by rationals.  Later work of Fayad and Saprykina \cite{FayadSaprykina} established 
examples for any Liouville rotation number on the boundary.  Their examples are not only ergodic 
but weak mixing.  Meanwhile, unpublished work of Herman 
precluded such ergodic pseudo-rotations when the rotation number on the boundary is Diophantine - those 
irrationals which are not Liouvillean.  

This seems to have led Herman to raise the following question at his ICM address in '98: is every 
pseudo-rotation with Diophantine rotation number on the boundary $C^\infty$-smoothly conjugate to a 
rigid rotation?  (That this is even true for circle diffeomorphisms is a deep result of his from 
'79 \cite{Herman}).  

An important step was recently taken in this direction.     
Fayad and Krikorian \cite{FayadKrikorian} proved a beautiful result, that in particular 
answered affirmatively a local version of Herman's question.  Their approach, using KAM methods, 
was apparently to some degree based on ideas of Herman, and they referred 
to the result as ``Herman's Last Geometric Theorem''.  They showed, that any pseudo-rotation 
that has Diophantine rotation number $\alpha$ on the boundary is $C^\infty$-smoothly conjugate to the rigid 
rotation $R_\alpha$, provided the disk map is already globally sufficiently close to $R_\alpha$ 
in some $C^k$-topology, where $k$ is finite and depends on $\alpha$.  

The global question is apparently still open.  Holomorphic curves, when one has them, have proven 
successful at handling global problems in symplectic geometry. Placing 
pseudo-rotations in the framework of this paper, there are plenty of holomorphic curves available, and 
we speculate that there might be something new to be gained from this angle.  We briefly 
describe where first observations lead.  

Let $\psi:D\rightarrow D$ be a pseudo-rotation, which on the boundary is a genuine rotation with 
rotation number $\alpha$.  Fix a Reeb-like mapping torus $(Z_1,\lambda_1)$ 
generating $\psi$ as its first return map.  Let $\gamma:S^1\rightarrow Z_1$ be the unique, simply 
covered, periodic orbit of the Reeb flow.  For each $n\in\N$ we have the ``longer'' mapping torus 
$(Z_n,\lambda_n)$ generating $\psi^n$, and the unique simply covered periodic orbit we denote by 
$\gamma^n$.  For each such $n$ Theorem \ref{T:existence_foliations_thesis} provides finite 
energy foliations $\Ftilde_n^-$ and $\Ftilde_n^+$ associated to $(Z_n,\lambda_n)$ which have 
boundary condition $\lfloor n\alpha\rfloor$ and $\lceil n\alpha \rceil$ respectively.  Each has 
the single spanning orbit, $\gamma^n$.  A schematic picture of $\Ftilde_n^-$ and $\Ftilde_n^+$, 
is shown in Figure \ref{F:foliations_for_pseudorotations}.  In general the arcs connecting the fixed point 
to the boundary are not perfect radial lines of course, although depicted as such in the figure.  

\begin{figure}[thb]
\psfrag{Fminus}{$\Ftilde_n^-$}
\psfrag{Fplus}{$\Ftilde_n^+$}
\begin{center}
\includegraphics[scale=.4]{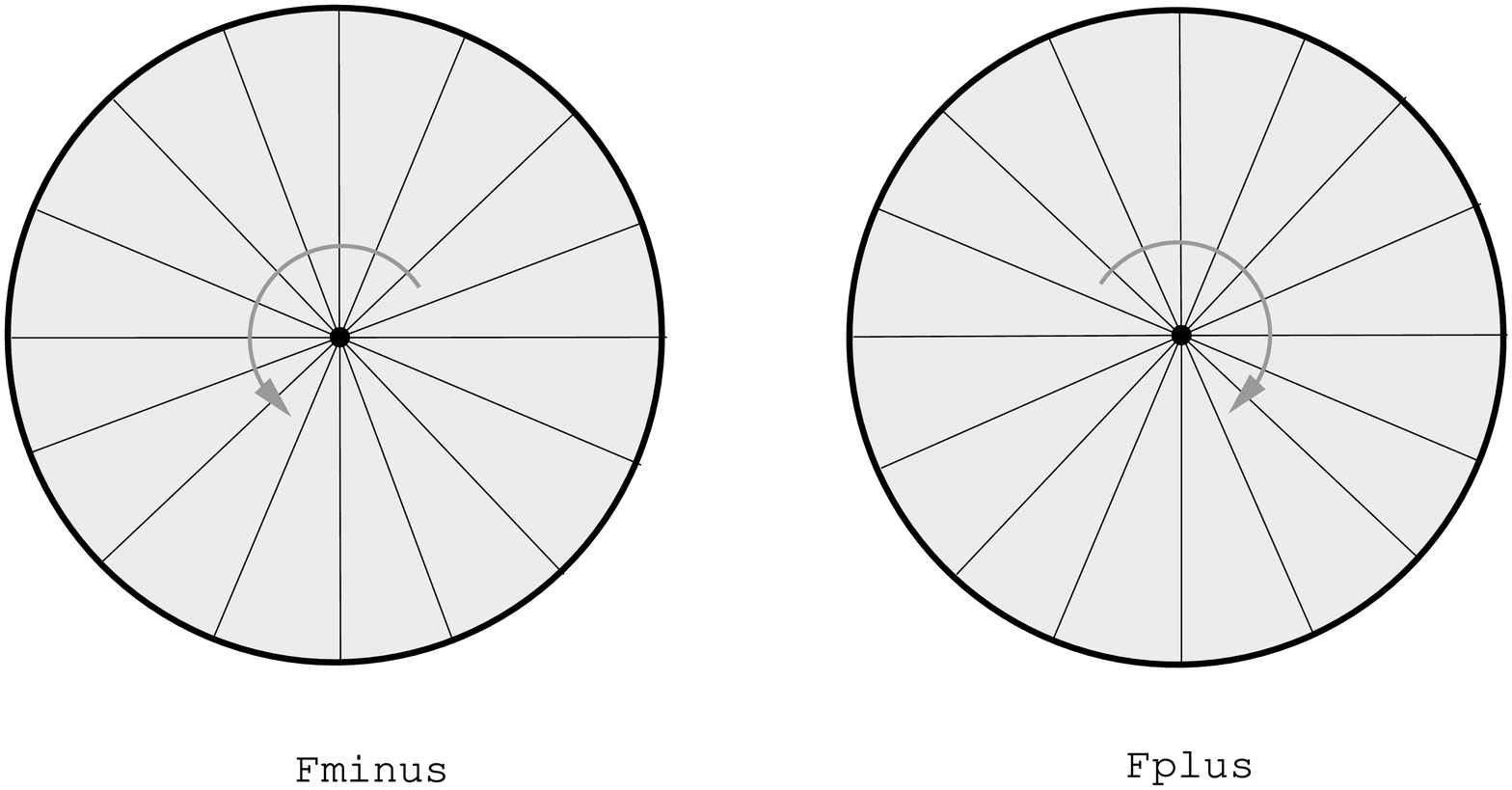}
\caption{Two finite energy foliations associated to an iterate of a pseudo-rotation.  On the 
left the trajectories pass through the leaves positively, on the right they pass through 
negatively.}
\label{F:foliations_for_pseudorotations}
\end{center}
\end{figure}

The $\R$-action on the foliations can be translated into an $\R$-action on the projected leaves 
in the three-manifold $Z_n$, which on the disk slice $D_0$ induces something akin to a ``radial'' 
coordinate on the disk.  

It turns out that each finite energy foliation $\Ftilde_n^-$ and $\Ftilde_n^+$ gives rise to a 
smooth diffeomorphism of the disk, not necessarily area preserving, 
\[
 			\varphi_n^-,\varphi_n^+:D\rightarrow D, 
\]
respectively, which fix the fixed point of $\psi^n$, are each $n$-th roots of the identity map on $D$, 
and the rotation number of $\varphi_n^-$ on the boundary is $\lfloor n\alpha\rfloor/n$, while the 
rotation number of $\varphi_n^+$ on the boundary is $\lceil n\alpha\rceil/n$.  Moreover, one has 
the following.  A similar statement for the $\varphi_n^+$ maps holds also.  
  
\begin{theorem}[Bramham]\label{T:approximating_pseudorotations_by_periodic_maps}
Let $\psi:D\rightarrow D$ be a pseudo-rotation with fixed point $p$, and coinciding with a rigid 
rotation on the boundary circle.  Let $\alpha\in\R/Z$ be the rotation number on the boundary.  
Then for each $n\in\N$ there exists a $C^\infty$-diffeomorphism $\varphi_n^-:D\rightarrow D$, 
of finite order, which fixes the point $p\in D$, for which there exists a subsequence 
$\varphi_{n_j}^-$ which converges in the $C^\infty$-topology to $\psi$.  
More precisely, $(\varphi_n^-)^n=\id_D$, and the rotation number on the boundary is the projection 
of $\lfloor n\alpha\rfloor/n$ down to $\R/\Z$.  
\end{theorem}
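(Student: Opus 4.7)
The plan is to extract $\varphi_n^-$ from the geometric structure of the finite energy foliation $\Ftilde_n^-$ supplied by Theorem \ref{T:existence_foliations_thesis}, verify the elementary properties by inspection of the construction, and then invoke equidistribution of $\{n\alpha\}$ together with a compactness argument for the sequence of foliations to extract a subsequence converging to $\psi$ in $C^\infty$.

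To construct $\varphi_n^-$, I would first read off the structure of $\Ftilde_n^-$ at the level of $Z_n$. By Remark \ref{R:leaves_intersect_disk_slice_transversally} and the uniqueness of the spanning orbit, the projected singular foliation $\F_n^-$ on $Z_n$ has $\gamma^n$ as its binding and the remaining leaves are smoothly embedded $2$-dimensional strips from $\gamma^n$ out to $\partial Z_n$, parameterized transversely by an angular coordinate $\theta$. Because the boundary condition equals $\lfloor n\alpha\rfloor$, the leaves trace $(1,\lfloor n\alpha\rfloor)$-curves on $\partial Z_n=\R/n\Z\times\partial D$, and in standard boundary coordinates $(z,\phi)$ the transverse parameter takes the explicit form $\theta=\phi-\lfloor n\alpha\rfloor z/n\in\R/\Z$. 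Combining $\theta$ with a radial parameter read off from the foliated structure produces a smooth diffeomorphism $\Xi_n$ of $Z_n$ onto a standard model in which $\F_n^-$ is flat and the restriction to the disk slice $D_0\cong D$ admits an obvious rigid $\R/\Z$-rotation fixing the center. I would define $\varphi_n^-$ to be the pull-back by $\Xi_n|_{D_0}$ of the rigid rotation by $\lfloor n\alpha\rfloor/n$ on the standard model.

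With this construction the elementary properties fall out by inspection: $\varphi_n^-$ fixes $p$ because $\Xi_n$ sends $p$ to the center of the standard disk; it is smooth because $\Xi_n$ is; $(\varphi_n^-)^n=\id_D$ because $n\cdot\lfloor n\alpha\rfloor/n=\lfloor n\alpha\rfloor\in\Z$; and on $\partial D$ it is the rigid rotation by $\lfloor n\alpha\rfloor/n\bmod\Z$ by construction of $\theta$. For the convergence, Weyl equidistribution of $\{n\alpha\}$ in $[0,1)$ supplies a subsequence $n_j$ with $\{n_j\alpha\}\rightarrow 0$, so that $\lfloor n_j\alpha\rfloor/n_j\rightarrow\alpha=\Rot_\psi(\partial D)$ and on $\partial D$ the maps $\varphi_{n_j}^-$ already converge to $\psi|_{\partial D}$ in $C^\infty$. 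To upgrade this to $C^\infty$-convergence on all of $D$, I would argue that after passing to a further subsequence the foliations $\Ftilde_{n_j}^-$, suitably renormalized by the $\R$-action, converge in $C^\infty_{\loc}$ to a ``locally finite energy'' foliation $\Ftilde_\infty^-$ of $\R\times Z_\infty$ in the sense outlined at the start of Section \ref{S:asymptotic_foliations}. The condition $\{n_j\alpha\}/n_j\rightarrow 0$ forces the limit foliation to have vanishing $d\lambda_\infty$-density, so its leaves are $\R$-invariant planes over Reeb trajectories, exactly as in the integrable example of Section \ref{S:integrable_maps}; the coordinate changes $\Xi_{n_j}$ therefore converge in $C^\infty_{\loc}$ to a smooth coordinate in which $\psi$ itself is the rigid rotation by $\alpha$, yielding $\varphi_{n_j}^-\rightarrow\psi$ in $C^\infty$.

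The decisive difficulty is this compactness step. Because the spanning orbits $\gamma^{n_j}$ have periods $n_j\rightarrow\infty$ and $\psi$ has no other periodic orbit, the global $E$-energy of leaves in $\Ftilde_{n_j}^-$ blows up, so the SFT compactness of \cite{BEHWZ} does not apply directly. The task is to replace global $E$-energy bounds with a per-period averaged version, using the uniform $d\lambda_{n_j}$-energy bound that comes from the fixed boundary condition together with Stokes' theorem, and then to push through elliptic estimates for the Cauchy--Riemann equation to obtain $C^\infty_{\loc}$-convergence. The pseudo-rotation hypothesis is essential here: it rules out the appearance of any new asymptotic periodic orbit in the limit, preventing bubbling or breaking of the holomorphic curves and forcing the limit foliation into the simple $\R$-invariant form needed to conclude.
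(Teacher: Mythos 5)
The paper does not actually prove this theorem: it states it (attributing the proof to the first author's work in preparation) after a heuristic sketch of how the finite-order maps $\varphi_n^\pm$ are extracted from the $\R$-action on the foliations $\Ftilde_n^\pm$. Your construction of $\varphi_n^-$ by flattening $\F_n^-$ via the transverse angular parameter and the $\R$-induced radial parameter is consistent with, and elaborates on, that sketch, so up to the point of convergence there is nothing in the paper to compare against and your account is plausible.

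Two concrete problems remain. First, the number-theoretic framing is confused: since $\lfloor n\alpha\rfloor/n = \alpha - \{n\alpha\}/n$ and $\{n\alpha\}\in[0,1)$, one has $\lfloor n\alpha\rfloor/n\to\alpha$ along the \emph{entire} sequence, so no Weyl equidistribution and no subsequence is needed to get the boundary rotation numbers to converge; and the condition you later invoke, $\{n_j\alpha\}/n_j\to 0$, is automatic and hence cannot ``force'' anything. What would be a genuine, nonvacuous condition (and which you presumably intend) is $\{n_j\alpha\}\to 0$, obtainable from density of the orbit of an irrational rotation; if the $d\lambda$-energy estimate you gesture at is to work, that is the hypothesis you should state and use explicitly. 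Second, and decisively, the compactness step — extracting a $C^\infty_{\loc}$-limit of $\Ftilde_{n_j}^-$ in $\R\times Z_\infty$ when the $E$-energy is unbounded, showing the limit is the vertical-type foliation, and upgrading $C^\infty_{\loc}$-convergence of the renormalized coordinates $\Xi_{n_j}$ to $C^\infty$-convergence of $\varphi_{n_j}^-$ on the closed disk — is exactly the ``locally finite energy foliation'' machinery the paper flags in Section \ref{S:asymptotic_foliations} as not yet developed. You correctly name the obstruction (blow-up of $E$, need for an averaged energy, bounded $d\lambda$-energy via Stokes, elliptic bootstrapping, and the pseudo-rotation hypothesis to rule out breaking and new asymptotic orbits), but you give no argument for any of it. As written this is an outline of a strategy with a genuine hole at its center, not a proof.
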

Thus each pseudo-rotation is the $C^{\infty}$-limit of a sequence of maps which fill the whole 
disk, minus the fixed point, with invariant circles.  
Note that no Diophantine conditions are placed on the rotation number $\alpha$ in this statement.  
Under the assumption that $\alpha$ is Diophantine one would like 
to control these circles and show that the pseudo-rotation also fills the disk with 
invariant circles.  

If Herman's question is answered affirmatively, one could also ask the following question about 
Liouvillean pseudo-rotations.  Does every pseudo-rotation with Liouvillean rotation number at least 
lie in the closure, in the $C^\infty$-topology, of the set 
\[
 			\chi=\{h\circ R_t\circ h^{-1}\,|\,t\in\R/\Z,\  h\in\Diff_{\omega_0}(D)\}.
\]
Here $R_t$ denotes the rotation through angle $2\pi t$ and $\Diff_{\omega_0}(D)$ refers to the smooth, 
area preserving, orientation preserving, diffeomorphisms.  This is apparently unknown.  If this were the case, it would mean that all 
pseudo-rotations which are conjugate to rotations lie in $\chi$, while all those which are 
not lie on the boundary of $\chi$.  Theorem \ref{T:approximating_pseudorotations_by_periodic_maps} 
suggests that the finite energy foliations might provide a way to approach this question.  
Both authors hope to explore these ideas in the future.

\subsection[Questions]{Questions about the asymptotic behavior of finite energy foliations and disk maps}
\begin{question}
Suppose that the disk map $\psi$ has a unique invariant circle, or quasi-periodic orbit, $C$ surrounding 
an elliptic fixed point $e\in D$ that has irrational rotation number $\omega$.  
If $\psi$ is the time-1 map of an autonomous Hamiltonian then we described above a method by 
which a suitably chosen sequence of finite energy foliations picks out $C$.  Does the same approach 
pick out $C$ without the global integrability assumption on $\psi$?  
\end{question}

\begin{question}
Can one, and if so how, recover results of Aubry-Mather theory using the foliations?  That is, assume 
that the disk map $\psi$ 
is a monotone twist map on the complement of the fixed point $0\in D$.  How do we find quasi-periodic orbits 
surrounding $0$ for all rotation numbers in the twist interval of $0$?  Can one do this using assumptions 
of monotone twist maps which are symplectic, that is, which remain true under any symplectic change 
of coordinates?  For example, using only say some inequalities on the Conley-Zehnder indices of period 
orbits?  
\end{question}

Suppose that $\Ftilde_j$, over $j\in\N$, is a sequence of finite energy foliations associated to $(Z_{n_j},\lambda_{n_j})$, 
where $(Z_1,\lambda_1)$ is a Reeb-like mapping torus generating $\psi:D\rightarrow D$, and $n_j$ is a sequence 
of integers tending to $+\infty$.  Let $k_j\in\Z$ be the boundary condition for $\Ftilde_j$.  

\begin{question}
Suppose that $e\in\Fix(\psi)$ is an interior fixed point, let us say elliptic.  Let 
$\gamma_e:S^1\rightarrow Z_1$ be the corresponding periodic Reeb orbit.  Suppose that for each $j$ 
there exists a closed cycle $C_j$ of rigid leaves in $\Ftilde_j$, whose projection down to $Z_{n_j}$ 
separates $\gamma_e^{n_j}$ from the boundary of $Z_{n_j}$, and satisfies: 
\begin{enumerate}
 \item The sequence $E_j:=\max\{ E_{d\lambda_{n_j}}(F)\,|\,F\in C_j\}$ converges to zero. 
 \item The ratio $k_j/n_j$ converges to some irrational number $\omega\in\Q^c$.  
 \item Each $C_j$ is invariant under the automorphism $\tau:\R\times Z_{n_j}\rightarrow\R\times Z_{n_j}$ 
where $\tau(a,z,x,y)=(a,z+1,x,y)$, meaning that for each leaf $F\in C_j$, we have $\tau(F)\in C_j$.  
\end{enumerate}
Let $S_j\subset D\backslash\{e\}$ be the unparameterized circle given by: 
\[
 	S_j:=\{ z\in D\,|\, \mbox{there exists } F\in C_j\mbox{ such that }(0,z)\in\pr(F)\}
\]
where $\pr:\R\times Z_{n_j}\rightarrow Z_{n_j}$ is the projection.  

The question then is: are there further conditions under which the sequence $S_j$ of compact connected 
subsets of $D\backslash\{e\}$ converges in some sense to a closed $\psi$-invariant subset of 
$D\backslash\{e\}$ on which all points have rotation number $\omega$ about $e$?  
\end{question}

Recall the deck transformation $\tau: Z_{\infty}\rightarrow Z_{\infty}$ given by $\tau(\z,\x,\y)=(\z+1,\x,\y)$.  
\begin{definition}
For an integer $q\in\Z$, let us say that an $\R$-invariant foliation $\Ftilde_\infty$ by 
pseudoholomorphic curves associated to $( Z_{\infty},\lambda_{\infty},J_\infty)$ is \emph{$q$-shift invariant} 
if for every leaf $F\in\Ftilde_\infty$ we have $\id\times\tau^q(F)$ is also a leaf.  
\end{definition} 
An easy observation is that if $\Ftilde_\infty$ is $q$-shift invariant, then the 
subcollection of leaves having vanishing $d\lambda_\infty$-energy picks out a $\psi^q$-invariant 
subset of the disk.  Indeed, let $\Omega\subset D$ be the set of $z\in D$ such that the plane 
\[
 		F=\R\times\phi_z(\R),  
\]
over the Reeb orbit $\phi_z:\R\rightarrow Z_{\infty}$ characterized by $\phi_z(0)=(0,z)$, is a leaf in $\Ftilde_\infty$.  Then, since $\tau^q(\R\times\phi_z(\R))=\R\times\phi_{\psi^{-q}(z)}(\R)$ is always 
true, then if $z\in\Omega$, then $\tau^q(F)\in\Ftilde_\infty$ implies that $\R\times\phi_{\psi^{-q}(z)}(\R)$ 
is a leaf in $\Ftilde_\infty$, and so $\psi^{-q}(z)\in\Omega$.  Thus $\psi^{-q}(\Omega)\subset\Omega$.  Similarly as $q$-shift invariance implies $(-q)$-shift invariance, 
$\psi^{q}(\Omega)\subset\Omega$, and so $\psi^{q}(\Omega)=\Omega$.   
\begin{question}
 Is a statement along the following lines true?  Consider a sequence $\Ftilde_n$ of 
finite energy foliations associated to Reeb-like mapping tori $(Z_1,\lambda_1),(Z_2,\lambda_2),\ldots$ 
where $(Z_1,\lambda_1)$ generates the area preserving disk map $\psi$.  Suppose that the sequence of 
foliations $\Ftilde_n$ converges in a $C_{\textup{loc}}^\infty$-sense to a foliation 
$\Ftilde_\infty$ associated to the universal covering $( Z_{\infty},\lambda_{\infty})$.  Suppose 
that $\Ftilde_\infty$ is $q$-shift invariant, for some $q\in\Z$.  Let $\Omega\subset D$ 
be the set of points $z\in D$ such that the plane
\[
 		F=\R\times\phi_z(\R),  
\]
over the Reeb orbit $\phi_z:\R\rightarrow Z_{\infty}$ characterized by $\phi_z(0)=(0,z)$, is a leaf in $\Ftilde_\infty$.   We just saw that $\psi^q$ restricts to a map on $\Omega$.  Then the question is, 
are there interesting general assumptions under which $\psi^q$ must have zero topological entropy 
on $\Omega$?  
\end{question}

Note that in the last question one needs to rule out the vertical foliation  
$\Ftilde^\nu( Z_{\infty},\lambda_{\infty})$, as $\Omega$ for this is the whole 
disk regardless of the dynamics.  Of course the 
vertical foliation is $q$-shift invariant for all $q\in\Z$, so this condition does not rule it out.   
This is another reason to phrase it so that $\Ftilde_\infty$ is achieved as a limit of finite energy 
foliations; then one could perhaps place assumptions on the sequence such as uniform bounds on some 
average notion of energy, or average of the boundary conditions, to rule out obtaining this trivial 
foliation.

\vspace{0.5cm}
\noindent {\bf Acknowledgment:} This material is based upon work supported by the National Science 
Foundation under agreement No. DMS-0635607 (BB) and by NSF DMS-1047602 (HH).  Any opinions, findings and conclusions or recommendations 
expressed in this material are those of the authors and do not necessarily reflect the views of the 
National Science Foundation.

\end{document}